\documentclass[11pt]{article}
\usepackage{amsfonts}
\usepackage{graphics}
\usepackage{indentfirst}
\usepackage{cite}
\usepackage{latexsym}
\usepackage{amsmath}
\usepackage{amssymb}
\usepackage[dvips]{epsfig}
\usepackage{amscd}
\usepackage[marginal]{footmisc}
\usepackage[bookmarks,colorlinks,citecolor=blue,linkcolor=red]{hyperref}
\usepackage{mathtools}
\allowdisplaybreaks
\hoffset -1.2cm
\setlength{\parindent}{12pt}                
\setlength{\parskip}{3pt plus1pt minus2pt}  
\setlength{\baselineskip}{20pt plus2pt minus1pt}
\setlength{\textheight}{23.5 true cm}      
\setlength{\textwidth}{14.5 true cm}
\topmargin    -0.5cm

\newtheorem{theorem}{Theorem}[section]
\newtheorem{remark}{Remark}[section]
\newtheorem{definition}{Definition}[section]
\newtheorem{lemma}[theorem]{Lemma}

\newtheorem{proposition}[theorem]{Proposition}

\def\O{{\Omega }}

\def\norm[#1]#2{\|#2\|_{#1}}

\def\la{\label}

\def\na{\nabla}

\def\QEDopen{{\setlength{\fboxsep}{0pt}\setlength{\fboxrule}{0.2pt}\fbox{\rule[0pt]{0pt}{1.3ex}\rule[0pt]{1.3ex}{0pt}}}} 
\def\QED{\QEDopen} 
\def\endproof{\hspace*{\fill}~\QED\par\endtrivlist\unskip}

\newcommand{\lm}{\lambda}
\newcommand{\ltwo}{_{L^2}^2}
\newcommand{\pa}{\partial}
\renewcommand{\r}{\mathbb{R}}

\newcommand{\ga}{\gamma}
\newcommand{\curl}{{\rm curl} }

\newcommand{\si}{\sigma}

\newcommand{\ol}{\overline}
\newcommand{\bt}{\begin{theorem}}
\newcommand{\bl}{\begin{lemma}}
\newcommand{\el}{\end{lemma}}
\newcommand{\et}{\end{theorem}}
\newcommand{\bn}{\begin{eqnarray}}
\newcommand{\en}{\end{eqnarray}}
\newcommand{\bnn}{\begin{eqnarray*}}
\newcommand{\enn}{\end{eqnarray*}}
\newcommand{\bnnn}{\begin{eqnarray*}}
\newcommand{\ennn}{\end{eqnarray*}}
\newcommand{\ba}{\begin{aligned}}
\newcommand{\ea}{\end{aligned}}
\newcommand{\be}{\begin{equation}}
\newcommand{\ee}{\end{equation}}

\renewcommand{\thefootnote}{}
\newcommand\blfootnote[1]{%
  \begingroup
  \renewcommand\thefootnote{}\footnote{#1}%
  \addtocounter{footnote}{-1}%
  \endgroup
}
\makeatletter      
\@addtoreset{equation}{section}
\makeatother       

\title{Global Strong and Weak Solutions to the Initial-boundary-value Problem of 2D Compressible MHD System with Large Initial Data and Vacuum}

\author{Yazhou C{\small HEN}, Bin H{\small UANG}, Xiaoding S{\small HI}   \\[3mm] {\normalsize   College of Mathematics and Physics, }\\ {\normalsize  Beijing University of Chemical Technology, Beijing 100029, P. R. China} }

\date{ }

\begin{document}

\maketitle

\blfootnote{Email: chenyz@mail.buct.edu.cn (Y.Chen), abinhuang@gmail.com (B.Huang), shixd@mail.buct.edu.cn (X.Shi)}

\begin{abstract}
In this paper, we study the barotropic compressible magnetohydrodynamic equations with the shear viscosity being a positive constant and the bulk one being proportional to a power of the density in a general two-dimensional bounded simply connected domain. For initial density allowed to vanish, we prove that the initial-boundary-value problem of 2D compressible MHD system admits the global strong and weak solutions without any restrictions on the size of initial data provided the shear viscosity is a positive constant and bulk one is $\lambda=\rho^\beta$ with $\beta>4/3$. As we known, this is the first result concerning the global existence of strong solutions to the compressible MHD system in general two-dimensional bounded domains with large initial data and vacuum.
\end{abstract}

\textbf{Keywords:} compressible magnetohydrodynamic equations;  global existence;  large initial data; slip boundary condition; vacuum.

\textbf{AMS subject classifications:} 35Q60, 35K61, 76N10, 76W05

\section{Introduction}
We deal with the viscous barotropic compressible magnetohydrodynamic (MHD) equations for isentropic flows in a domain $\Omega\subset\r^{2}$, which can be written as
\begin{equation}\label{CMHD}
\begin{cases}
\rho_t+ \mathop{\mathrm{div}}\nolimits(\rho u)=0,\\
(\rho u)_t+\mathop{\mathrm{div}}\nolimits(\rho u\otimes u)+\nabla P
=\mu \Delta u+\nabla((\mu+\lambda) \mathop{\mathrm{div}}\nolimits u)+H \cdot \nabla H - \frac{1}{2}\nabla|H|^2,\\
H_t+u \cdot\nabla H-H \cdot \nabla u+ H \mathop{\mathrm{div}}\nolimits u=\nu \Delta H,
\\
\mathop{\mathrm{div}}\nolimits H=0,
\end{cases}
\end{equation}
where $(x,t)\in\Omega\times (0,T)$, $t\geq 0$ is time, and $x=(x_1,x_2)$ is the spatial coordinate. The unknown functions $\rho=\rho(x,t), u=(u_1,u_2)(x,t),$ and $H=(H_1,H_2)(x,t)$ denote the fluid density, velocity and magnetic field respectively and the pressure $P$ is given by
\begin{equation}
P=P(\rho)=a\rho^{\gamma}, 	
 \end{equation}
with the constants $a>0$ and $\gamma >1$.
The shear viscosity $\mu$ and the bulk one $\lambda$ satisfy the following hypothesis (see \cite{vk1995}):
\begin{equation}\label{r-b-0}
\mu=const.>0,\quad \lambda=\lambda(\rho)=b\rho^\beta,
\end{equation}
with the positive constants $b$ and $\gamma$.
The constant $\nu >0$ is the resistivity coefficient which is inversely proportional to the electrical conductivity constant. In what follows, without loss of generality, we set $a=b=1$.
Throughout this paper,  $\Omega\subset\r^{2}$ denotes a simply connected bounded domain with $C^{2,1}$ boundary $\partial \Omega$.
In addition, this paper concerns the problem of \eqref{CMHD}-\eqref{r-b-0} with the initial data
\begin{equation}\label{initial}
\displaystyle  (\rho,\rho u, H)\big|_{t=0}=(\rho_0, m_0,H_0),\quad \text{in}\,\,\, \Omega,
\end{equation}
and the boundary conditions
\begin{align}
 &u\cdot n=0,\,\,\,\curl u=0, &\text{on} \,\,\,\partial\Omega\times(0,T), \label{navier-b}\\
&\quad H=0 
, &\text{on} \,\,\,\partial\Omega\times(0,T),\label{boundary}
\end{align}
where $n=(n_1,n_2)$ is the unit outward normal vector to $\partial \Omega$.

The compressible MHD system \eqref{CMHD} plays a fundamental role in astrophysics, geophysics and plasma physics and its mathematical challenges has attracted a lot of attention of mathematicians. There are a growing literature devoted to the analysis of the well-posedness and dynamic behavior to the solutions of the system, see, for example, \cite{ht2005,ko1982,cw2002,cw2003,fhl2017,djj2013,df2006,fy2008,fy2009,hhpz2017,hw2008,hw2008-1,hw2010,k1984,lyz2013,sh2012,lxz2013,liu2015,lh2015,lsx2016,tg2016,vh1972,wang2003,xh2017,zjx2009} and their references.
Among them, we briefly review some results related to well-posedness of solutions for the multi-dimensional compressible MHD equation with constant viscosities.
For Cauchy problem, Vol'pert-Hudjaev \cite{vh1972} and Fan-Yu \cite{fy2009} obtained the local existence of  classical solutions to the 3D compressible MHD equations with the initial density is strictly positive or could contain vacuum, respectively.
Kawashima \cite{k1984} first established the global smooth solutions to the general electro-magneto-fluid equations in two dimensions with non-vacuum.
Suen-Hoff\cite{sh2012} and Liu et al.\cite{lyz2013} obtained the weak solutions to the 3D compressible magnetohydrodynamic flows with discontinuous initial data.
Recently, Li et al.\cite{lxz2013} and Lv et al.\cite{lsx2016} established the global existence of classical solutions with large oscillations and vacuum for 3D case and 2D one, respectively, provided the initial data be of small energy.
For the initial-boundary-value problem with non-slip boundary condition for the velocity,
Hu-Wang \cite{hw2010} proved the global existence of renormalized solutions for general large initial data, also see \cite{hw2008,fy2008} for the non-isentropic compressible MHD equations.
As far as the slip boundary is concerned, 
Tang-Gao \cite{tg2016} obtained the local strong solutions to the compressible MHD equations in a 3D bounded domain with the Navier-slip condition.
Dou et al.\cite{djj2013} prove the global existence and uniqueness of smooth solutions around a rest state in a 2D bounded domain with slip boundary condition.
More recently, Chen et al.\cite{chs2020} obtained the global classical solutions with vacuum and small energy but possibly large oscillations in a 3D bounded domain with slip boundary condition, which generalized the results of Cai-Li \cite{cl2019} for the barotropic compressible Navier-Stokes equations to the compressible MHD
ones.

Until now, all the global existence of strong (classical) solutions for the  multi-dimensional compressible MHD equations were obtained with some ``smallness" assumptions on the initial data. In contrast, positive results without limitation on the size of initial value are rather fewer.
It should be noted that when $H=0$, the compressible MHD system \eqref{CMHD} turns to be the compressible Navier-Stokes equations with density-dependent viscosity.
Vaigant-Kazhikov \cite{vk1995} first obtained a unique global strong solution away from vacuum for 2D compressible Navier-Stokes equation in rectangle domain with no restrictions on the size of initial data provided $\beta>3$.
Recently, Huang-Li \cite{hl2016,hl2012} applied some new ideas based on commutator theory and blow up criterion, and improved the conclusion in periodic case, and even for the Cauchy problem in the whole space (also see \cite{jwx2018}), demanding only $\beta>\frac{4}{3}$. Up to now, $\beta>\frac{4}{3}$ still seems to be the best result one may expect. Let us turn back to the compressible MHD system with density-dependent viscosity. Lv-Huang \cite{lh2015} obtained the local strong solutions of Cauchy problem of the two-dimensional compressible MHD equations with vacuum as far field density. Mei\cite{mei2015} established the global well-posedness of classical solutions to the 2D compressible MHD equations with large initial data and vacuum on the torus $\mathbb{T}^2$ and the whole space $\r^2$.

However, for general domains, the theory of large initial data for compressible MHD system is still blank, due to boundary terms do bring some essential difficulties and the classical commutator theory in the case of the whole space $\r^2$ and the torus $\mathbb{T}^{2}$ is no longer available for general bounded domains.
Very recently, Fan et al.\cite{fll2021} study the global existence of strong and weak solutions of compressible Navier-Stokes system with large initial data in general simply connected domains. They get a pointwise representation of the effective viscous flux via applying Green's function, as a substitute for the commutator for the case of $\mathbb{T}^{2}$ and $\mathbb{R}^{2}$, which plays an important role to derive the upper bound of density $\rho$. In particular, for the unit disc $\mathbb{D}$, Green's function takes the following form (see \cite{stt2016}):
\begin{equation}\label{gre1}
G(x, y)=-\frac{1}{2\pi}\Big(\ln|x-y|+\ln\big||x|y-\frac{x}{|x|}\big|\Big).
\end{equation}
For the Neumann problem in $\mathbb{D}$ as follows:
\begin{equation}\label{flux1}
\begin{cases}
\triangle F=\mathrm{div}f &\mathrm{in}\, \,  \mathbb{D}, \\
\frac{\partial F}{\partial n}=f\cdot n &\mathrm{on}\, \,  \partial \mathbb{D},
\end{cases}
\end{equation}
the Green's identity yields that $F$ has the following integral representation:
\begin{equation}\label{flux2}
F(x)=  -\int_\mathbb{D} \nabla_y G(x, y)\cdot f(y)dy+\int_{\partial\mathbb{D}}\frac{\partial G}{\partial n}(x, y)F(y)dS_y.
\end{equation}
For the general simply connected domain $\Omega$, the similar integral representation has been derived by applying the Riemann mapping theorem and  the pull-back Green's function method (see \cite[Lemma 3.7]{fll2021}).
Fortunately, these methods are still available to get a pointwise representation of the effective viscous flux for compressible MHD system, defined by
\begin{equation}\label{flux}
\displaystyle  F\triangleq(2\mu+\lambda)\mathrm{div} u -P-\frac{1}{2}|H|^2.
\end{equation}

Motivated by the interesting work of Fan et al.\cite{fll2021} for global existence to compressible Navier-Stokes system with large data and vacuum, the main purpose of this paper is to establish the global well-posedness of strong and weak solutions of the compressible MHD system \eqref{CMHD}-\eqref{boundary} in a simply connected bounded domain $\Omega\subset\r^2$ without any limitation on the size of initial value. The initial density is allowed to contain vacuum states.

Before formulating our main result, we first explain the notation and conventions used throughout the paper.
For integer $k\geq 1$ and $1\leq q<+\infty$, We denote the standard Sobolev space by $W^{k,q}(\Omega)$ and $H^k(\Omega)\triangleq W^{k,2}(\Omega)$.
For simplicity, we denote $L^q(\Omega)$, $W^{k,q}(\Omega)$ and $H^k(\Omega)$ by $L^q$, $W^{k,q}$ and $H^k$ respectively, and set
$$\int_\Omega fdx \triangleq \int fdx,\quad \bar{f}\triangleq \frac{1}{|\Omega|}\int fdx 
. $$
For $v=(v_1,v_2)$, the material derivative and the transpose of $v$, the transpose gradient and the vorticity are given by
\begin{equation}\label{bot}
\displaystyle \frac{D}{Dt}v=\dot{v}\triangleq v_t+u\cdot \nabla v,
\ v^{\bot}=(-v_2,v_1),\
\nabla^{\bot} \triangleq(-\partial_2,\partial_1),\ \omega\triangleq\nabla^{\bot}\cdot v.
\end{equation}

Finally, we give the definition of weak and strong solution to \eqref{CMHD} as follows.
\begin{definition}
$(\rho, u, H)$ is called a weak solution to \eqref{CMHD} if it satisfies \eqref{CMHD} in the sense of distribution. Moreover, if all the derivatives of the weak solution involved in \eqref{CMHD} are regular distributions, and \eqref{CMHD} hold almost everywhere in $\Omega\times (0,  T)$, then the solution is called strong.
\end{definition}

Now we can state our main result, Theorem \ref{th1}, concerning existence of global strong solutions to the problem  \eqref{CMHD}-\eqref{boundary}.
\begin{theorem}\label{th1}
Let $\Omega$ be a simply connected bounded domain in $\r^2$ with $C^{2,1}$ boundary $\partial\Omega$. Assume that
\begin{equation}\label{b-g}
 \displaystyle \beta>\frac{4}{3},\quad \gamma>1,
 \end{equation}
and the initial data $(\rho_0,m_0,H_0)$ satisfy
\begin{align}\label{dt1}
0\leq\rho_0\in W^{1,q},\quad m_0=\rho_0 u_0,\quad (u_0, H_0)\in H^1,\quad \mathrm{div} H_0=0,
\end{align}
and the boundary conditions \eqref{navier-b}-\eqref{boundary}.
Then the initial-boundary-value problem \eqref{CMHD}-\eqref{boundary} has a unique global strong solution $(\rho,u,H)$ in $\Omega\times(0,\infty)$ satisfying for any $0<T<\infty$ and $q>2$,
\begin{equation}\label{esti-uh}
\begin{cases}
\rho\in C([0, T]; W^{1, q} ), \, \rho_t\in L^\infty( 0, T ;L^2 ), \\
(u,H)\in L^\infty(0, T;H^1 )\cap L^{1+1/q}(0, T;W^{2, q} ), \\
(\sqrt{t}u,\sqrt{t}H)\in L^\infty(0, T;H^{2} )\cap L^2(0, T;W^{2, q} ), \\
(\sqrt{t}u_t,\sqrt{t}H_t)\in L^2(0, T;H^1 ), \\
\rho u\in C([0,T];L^2),\quad H\in C([0,T];H^1),\\
(\sqrt{\rho}u_t,H_t)\in L^2(\Omega\times(0, T)).
\end{cases}
\end{equation}
\end{theorem}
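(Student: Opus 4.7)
The plan is standard in structure: establish local-in-time strong solutions, then derive a priori estimates on an arbitrary time interval $[0,T]$ that are strong enough to continue the local solution globally. Local existence in the regularity class \eqref{esti-uh} follows by routine linearization and fixed-point arguments, so the heart of the proof is the a priori estimates.

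First, I would record the basic energy identity. Testing \eqref{CMHD}$_2$ with $u$ and \eqref{CMHD}$_3$ with $H$, and using the boundary conditions \eqref{navier-b}-\eqref{boundary}, yields mass conservation $\|\rho(\cdot,t)\|_{L^1}=\|\rho_0\|_{L^1}$ and the basic inequality
$$\sup_{0\le t\le T}\int\left(\rho|u|^2+|H|^2+\rho^\gamma\right)dx+\int_0^T\!\!\int\left(\mu|\nabla u|^2+(\mu+\lambda)(\mathrm{div}\,u)^2+\nu|\nabla H|^2\right)dx\,dt\le C.$$
Since $H|_{\partial\Omega}=0$ and $\mathrm{div}\,H_0=0$, the constraint $\mathrm{div}\,H\equiv 0$ is propagated in time.

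The crucial step is the pointwise upper bound on $\rho$, following the strategy of Fan--Li--Li~\cite{fll2021} adapted to the MHD setting. Taking the divergence of \eqref{CMHD}$_2$ shows that the effective viscous flux $F$ in \eqref{flux} satisfies a Neumann-type problem
$$\triangle F=\mathrm{div}(\rho\dot u)-\mathrm{div}\bigl(H\cdot\nabla H-\tfrac{1}{2}\nabla|H|^2\bigr),\qquad \frac{\partial F}{\partial n}\Big|_{\partial\Omega}=\bigl(\rho\dot u-H\cdot\nabla H+\tfrac12\nabla|H|^2\bigr)\cdot n,$$
where I have used \eqref{navier-b} and \eqref{boundary} to compute the boundary trace. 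Invoking the Riemann mapping theorem and the pull-back Green's function representation (Lemma~3.7 of \cite{fll2021}), $F$ admits a pointwise integral formula analogous to \eqref{flux2}. Combining this with the continuity equation rewritten as $(\log\rho)_t+u\cdot\nabla\log\rho+\mathrm{div}\,u=0$ and the explicit form $\lambda=\rho^\beta$, I obtain an ODE of Zlotnik type along particle trajectories for a suitable function of $\rho$; the threshold $\beta>4/3$ is exactly what makes the dissipative term dominate the pressure and magnetic contributions, delivering $\rho\in L^\infty(\Omega\times(0,T))$ independently of $T$.

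Once the density is bounded, I would run the standard higher-order iteration: test \eqref{CMHD}$_2$ by $\dot u$ and \eqref{CMHD}$_3$ by $H_t$ to control $\|\sqrt\rho\,\dot u\|_{L^2}$, $\|H_t\|_{L^2}$, $\|\nabla u\|_{L^2}$ and $\|\nabla H\|_{L^2}$, with $\sqrt t$ weights absorbing the initial singularity caused by $(u_0,H_0)\in H^1$ only; then apply elliptic regularity for the Lamé-type system for $u$ and parabolic regularity for the Dirichlet problem for $H$ to bootstrap to the $L^{1+1/q}_tW^{2,q}_x$ bounds stated in \eqref{esti-uh}; finally propagate $\rho\in W^{1,q}$ by differentiating the continuity equation and applying Gronwall with $\|\nabla u\|_{L^\infty_tL^\infty_x}$ now controlled. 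Uniqueness follows by a standard relative-energy comparison between two strong solutions sharing the same data.

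The main obstacle is the pointwise bound on $\rho$. In a general simply connected $\Omega$ the commutator identities exploited on $\mathbb T^2$ and $\mathbb R^2$ by Huang--Li and Mei are no longer available, forcing the Green's-function route; the additional MHD terms $H\cdot\nabla H$ and $\tfrac12|H|^2$ enter both the flux equation and its boundary trace, and must be absorbed through sharp estimates on $\|H\|_{L^\infty}$ and $\|\nabla H\|_{L^q}$ obtained from the parabolic-Dirichlet structure of \eqref{CMHD}$_3$. Closing the Zlotnik-type argument for $\log\rho$ uniformly in $T$ while keeping these magnetic terms subordinate to the $\rho^\beta\mathrm{div}\,u$ dissipation is where I expect the hypothesis $\beta>4/3$ to be used sharply, in combination with careful interpolation against the basic energy bound on $H$.
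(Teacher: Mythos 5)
Your skeleton does match the paper's route (basic energy estimates, the Fan--Li--Li pull-back Green's function representation of the effective viscous flux, an upper bound for $\rho$, time-weighted higher-order estimates, approximation and compactness), but the decisive step --- actually closing the bound on $\rho$ --- is not carried by what you describe. The paper's argument is not a Zlotnik-type ODE in which ``dissipation dominates the pressure and magnetic contributions'': the terms $P+\tfrac12|H|^2\ge 0$ are simply discarded in $\frac{D}{Dt}\big(2\mu\ln\rho+\beta^{-1}\rho^\beta\big)\le -F$, and the entire difficulty is to control the Green's-function representation of $F$ in terms of $R_T=1+\sup_{t\le T}\|\rho\|_{L^\infty}$. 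That requires a chain of intermediate estimates which your sketch omits and which cannot be bypassed: the $L^\infty_t L^p$ bounds on $H$ and on $\rho$, the weighted momentum bound $\sup_t\int\rho|u|^{2+\alpha}dx\le C$ with $\alpha=\alpha_0R_T^{-\beta/2}$, the bound of $\|\nabla u\|_{L^p}$ in terms of $R_T$, $A_1$, $A_2$, and the logarithmic estimate $\sup_t\ln A_1^2+\int_0^T A_2^2/A_1^2\,dt\le C R_T^{1+\kappa\beta}$. Only after estimating the commutator-type terms $F_{c2},F_{c3}$ in the representation does one obtain $\int_0^T\max_x|F_1|\,dt\le CR_T^{1+\beta/4+3\varepsilon}$ together with $|F_0|\le CR_T^{(2+\beta)/3}$, and the density bound then follows from the power comparison $R_T^{\beta}\le C R_T^{\max\{1+\beta/4+3\varepsilon,\,(2+\beta)/3\}}$, which is exactly where $\beta>4/3$ enters; as stated, your sketch supplies no mechanism that produces this threshold. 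Moreover the claimed $T$-independence of the density bound is not what this method gives (every constant depends on $T$ through the energy and $\int_0^T A_1^2\,dt$), and your Neumann problem for $F$ double counts the magnetic pressure: since $\tfrac12|H|^2$ is already part of $F$ in \eqref{flux}, the source is $\mathrm{div}(\rho\dot u)$ plus the divergence of $H\cdot\nabla H$ alone, with no additional $\tfrac12\nabla|H|^2$ on the right-hand side or in the boundary trace.

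Two further gaps. First, with vacuum, $m_0=\rho_0u_0$ as the only compatibility condition and $(u_0,H_0)\in H^1$, local existence in the class \eqref{esti-uh} is not ``routine''; the paper never proves such a vacuum local theory, but instead regularizes the data ($\rho_0^\delta=\tilde\rho_0^\delta+\delta$, an elliptic correction of $(u_0,H_0)$ to fit the boundary conditions), solves globally for strictly positive density via the continuation argument, checks that all a priori bounds are independent of $\delta$ (in particular the lower bound \eqref{inf-r} keeps the approximate densities positive), and passes to the limit; your plan needs either this approximation scheme or an independent local well-posedness result with vacuum, which you do not supply. Second, propagating $\rho\in W^{1,q}$ ``by Gronwall with $\|\nabla u\|_{L^\infty_t L^\infty_x}$ now controlled'' is circular: $\|\nabla u\|_{L^\infty}$ is only available through $\|\nabla^2u\|_{L^q}$, which itself involves $\|\nabla\rho\|_{L^q}$ through $\nabla\lambda(\rho)$; the paper breaks this circle with the Beale--Kato--Majda-type logarithmic inequality \eqref{bkm} and a Gronwall argument for $\ln(e+\|\Psi\|_{L^q})$, and some such logarithmic device is indispensable in your bootstrap as well.
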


The second result concerns the global existence of weak solution to the problem  \eqref{CMHD}-\eqref{boundary}.
\begin{theorem}\label{th2}
Assume that \eqref{b-g} holds and  the initial data $(\rho_0,m_0,H_0)$ satisfy
\begin{align}\label{dt1-2}
0\leq\rho_0\in L^\infty,\quad m_0=\rho_0 u_0,\quad (u_0, H_0)\in H^1,\quad \mathrm{div} H_0=0,
\end{align}
and the boundary conditions \eqref{navier-b}-\eqref{boundary}.
Then the problem \eqref{CMHD}-\eqref{boundary} has at least one weak solution $(\rho,u,H)$ in $\Omega\times(0,\infty)$ satisfying for any $0<\tau <T<\infty$ and $p\geq 1$,
\begin{equation}\label{esti-w}
\begin{cases}
\rho\in L^\infty(0,T; L^\infty)\cap C([0,T];L^{p} ),\\
(u,H)\in L^\infty(0, T;H^1 ), \\
(u_t,H_t)\in L^2(\tau, T;L^2 ), \\
(\nabla u,\nabla H)\in L^\infty(\tau, T;L^p),\\
(F,\omega)\in L^2(0, T;H^1).
\end{cases}
\end{equation}
\end{theorem}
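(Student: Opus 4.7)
The plan is to construct weak solutions by approximation from the strong solutions produced by Theorem \ref{th1}. Given initial data $(\rho_0, m_0, H_0)$ satisfying only \eqref{dt1-2}, I would first regularize the density via standard mollification to obtain $\rho_0^\eta \in W^{1,q}$ with $\|\rho_0^\eta\|_{L^\infty} \leq \|\rho_0\|_{L^\infty}$ and $\rho_0^\eta \to \rho_0$ in every $L^p$, set $m_0^\eta = \rho_0^\eta u_0$, and approximate $H_0$ by divergence-free $H_0^\eta \in H^1$ vanishing on $\partial\Omega$ with $H_0^\eta \to H_0$ in $H^1$. Theorem \ref{th1} then yields a family of global strong solutions $(\rho^\eta, u^\eta, H^\eta)$ indexed by $\eta$.

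The second step is to extract estimates uniform in $\eta$. The a priori bounds leading to Theorem \ref{th1} split naturally into those that depend only on $\|\rho_0\|_{L^\infty}$ and $\|(u_0, H_0)\|_{H^1}$ and those requiring $\|\nabla \rho_0\|_{L^q}$. From the first group I would retain the basic energy inequality, the crucial $L^\infty_{t,x}$ bound on $\rho^\eta$ (the heart of the Vaigant--Kazhikov-type argument at $\beta > 4/3$, derived from the pointwise representation of the effective viscous flux $F$ via Green's functions, cf.\ \eqref{gre1}--\eqref{flux2}), the $L^\infty(0,T;H^1)$ bound on $(u^\eta, H^\eta)$, and the $L^2(0,T;H^1)$ bound on $(F^\eta, \omega^\eta)$. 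Time-weighted higher estimates, valid on $[\tau, T]$ for each $\tau > 0$, should yield $\nabla u^\eta,\nabla H^\eta \in L^\infty(\tau,T;L^p)$ and $u_t^\eta, H_t^\eta \in L^2(\tau, T; L^2)$, uniformly in $\eta$.

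With these bounds I would pass to a subsequence so that $\rho^\eta \rightharpoonup \rho$ weakly-$*$ in $L^\infty$, $(u^\eta, H^\eta) \rightharpoonup (u, H)$ weakly-$*$ in $L^\infty(0,T; H^1)$, $(F^\eta, \omega^\eta) \rightharpoonup (F, \omega)$ weakly in $L^2(0,T;H^1)$, and, by Aubin--Lions applied with the time-derivative estimates obtained from the momentum and induction equations, $(u^\eta, H^\eta) \to (u, H)$ strongly in $L^2(0, T; L^2)$. The main obstacle is strong convergence of $\rho^\eta$, which is required to pass to the limit in $\rho u \otimes u$, in $P(\rho)$ and $\lambda(\rho)\,\mathrm{div}\, u$, and in the convective term of the continuity equation. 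On any interval $[\tau, T]$ with $\tau > 0$, the uniform control of $\nabla u^\eta$ in $L^\infty(\tau, T; L^p)$ together with the continuity equation yields, by standard linear transport theory, strong convergence $\rho^\eta \to \rho$ in $C([\tau, T]; L^p)$ for every finite $p$. Continuity down to $t = 0$ is then recovered from strong convergence of the approximate initial data and the uniform $L^\infty_{t,x}$ bound on $\rho^\eta$.

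Finally, strong convergence of $\rho^\eta$ together with strong convergence of $(u^\eta, H^\eta)$ in $L^2(0,T;L^2)$ allows all nonlinear terms in \eqref{CMHD} to pass to the limit in the sense of distributions, so $(\rho, u, H)$ is a weak solution. The regularity class \eqref{esti-w} is inherited from the uniform estimates by weak and strong lower semi-continuity, while $\rho \in C([0,T]; L^p)$ follows from the renormalized form of the continuity equation combined with the obtained spatial-temporal regularity of $u$. Note that uniqueness is not asserted in Theorem \ref{th2}, consistent with the low regularity of $\rho_0$.
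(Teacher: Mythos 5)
Your overall scheme is the paper's: mollify the data, solve globally by the strong-solution theory (the paper uses Proposition \ref{prop3} with $\rho_0^\delta=\tilde\rho_0^\delta+\delta$ rather than Theorem \ref{th1} directly, an immaterial difference), retain exactly those a priori bounds whose constants depend only on $\|\rho_0\|_{L^\infty}$ and $\|(u_0,H_0)\|_{H^1}$ (namely \eqref{rho-inf1}, \eqref{low1}, \eqref{a02}, \eqref{tdu-lp-t}, \eqref{low2}, \eqref{f-inf}), and pass to the limit by Aubin--Lions. The genuine gap is the step you dispatch as ``standard linear transport theory'': strong convergence of $\rho^\eta$. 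Strong $L^2_{t,x}$ convergence of $u^\eta$ gives no strong compactness of $\operatorname{div}u^\eta$, and an $L^\infty(\tau,T;L^p)$ bound on $\nabla u^\eta$ is a bound, not compactness; in the renormalized continuity equation the term $(\rho^\eta)^2\operatorname{div}u^\eta$ (or $\rho^\eta\log\rho^\eta$ versions of it) is a product of two sequences each converging only weakly, so no transport stability theorem closes the argument. Indeed the claim is false as a statement about linear transport: take $u^n=(-n^{-1}\cos(nx_1)g(t),0)$, so $u^n\to0$ uniformly, $\nabla u^n$ and $\operatorname{div}u^n=g(t)\sin(nx_1)$ are uniformly bounded, the initial density is identically $1$, yet the densities develop $O(1)$ oscillations in $nx_1$ and converge only weakly. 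Compactness of the density must therefore come from the coupling with the momentum equation, and this is precisely what the paper (following \cite{hl2016}) uses: \eqref{low1} and \eqref{f-inf} give $F^\eta$ bounded in $L^2(0,T;H^1)$ with a weighted bound on $F^\eta_t$, hence by Aubin--Lions $F^\eta\to F$ \emph{strongly} in $L^2(\tau,T;L^p)$ (first half of \eqref{59}); writing $\operatorname{div}u^\eta=(F^\eta+P(\rho^\eta)+\tfrac12|H^\eta|^2)/(2\mu+(\rho^\eta)^\beta)$ turns the bad product into (strongly convergent)$\times$(bounded function of $\rho^\eta$) plus a nonlinear function of $\rho^\eta$ alone, and a convexity/monotonicity (defect-measure) argument starting from the strong convergence of $\rho_0^\eta$ then yields $\rho^\eta\to\rho$ in $C([0,T];L^p)$. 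Your proposal records only weak $L^2(0,T;H^1)$ convergence of $(F^\eta,\omega^\eta)$ and never uses the strong compactness of the effective viscous flux, so the decisive ingredient is missing.

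Two smaller points. Running any stability argument on $[\tau,T]$ presupposes strong convergence of the data $\rho^\eta(\tau)$ at time $\tau$, which you have not established; the argument has to start at $t=0$, where the mollified data do converge strongly. And continuity of $\rho$ (and of the convergence) down to $t=0$ in $L^p$ does not follow from ``the uniform $L^\infty_{t,x}$ bound'' alone: the working ingredient is the uniform integrability of $\operatorname{div}u^\eta$ in $L^1(0,T;L^\infty)$ supplied by \eqref{f-inf} (e.g.\ via $\frac{d}{dt}\int(\rho^\eta)^2dx=-\int(\rho^\eta)^2\operatorname{div}u^\eta dx$), which gives equicontinuity of $t\mapsto\|\rho^\eta(t)\|_{L^2}$ near $t=0$ uniformly in $\eta$.
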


\begin{remark}\label{rem:1}
Compared with \cite{mei2015} on the torus $\mathbb{T}^2$ and the whole space $\r^2$, our Theorem  \ref{th1} seems to be the first concerning the global existence of strong solutions to the compressible MHD system  in general two-dimensional bounded domains with large data and vacuum.
\end{remark}

\begin{remark}\label{rem:2}
When $H = 0$, i.e., there is no electromagnetic field effect, the compressible MHD system \eqref{CMHD} turns to be the compressible Navier-Stokes equations, and Theorem \ref{th1} is the same as the result of Fan et al.\cite{fll2021}. Roughly speaking, we generalize the results of \cite{fll2021} to the compressible MHD equations.
\end{remark}

\begin{remark}
Similarly to that for compressible Navier-Stokes equations (cf.\cite{hl2016,fll2021}), we only require the initial data $(\rho_0,m_0,H_0)$ satisfy the natural compatibility condition $m_0=\rho_0u_0$, which is much weaker than those of \cite{lxz2013,chs2020,mei2015} where
\begin{align}\label{dt3}
\displaystyle  -\mu\triangle u_0-\nabla ((\mu+\lambda(\rho_0))\mathop{\mathrm{div}}\nolimits u_0 )+ \nabla P(\rho_0)- H_0 \cdot \nabla H_0+\frac{1}{2}\nabla|H_0|^2  = \rho_0^{1/2}g,
\end{align}
for some $g\in L^2$ is required.
\end{remark}

\begin{remark}
Similar to \cite{chs2020,mei2015}, if the initial data $(\rho_0,m_0,H_0)$ satisfies some additional regularity and compatibility conditions \eqref{dt3}, the global strong solutions obtained by Theorem \ref{th1} become classical ones for positive time.
\end{remark}

\begin{remark}
For technical reason, we still need assume that $\beta>4/3$ which is the same as that of \cite{hl2016,jwx2018,fll2021}. However, it seems that $\beta>1$ is the extremal case for the system \eqref{CMHD} (see \cite{vk1995} or Lemma \ref{lem-rhop} below). Therefore, it would be interesting to study the case of $1<\beta\le 4/3$ which is left for the future.
\end{remark}

We now sketch the main idea used in this paper. Similar to the argument in \cite{hl2016,fll2021,mei2015}, the key issue in our proof is to derive the upper bound of the density in Proposition \ref{lem-rho-inf}. As mentioned above, unlike \cite{hl2016,mei2015}, it can not apply the standard commutator theory to estimate the density. Motivated by the work on the compressible Navier-Stokes equations in \cite{fll2021}, we get a pointwise representation of the effective viscous flux for compressible MHD system (see \eqref{f}). However, compared with the compressible Navier-Stokes equations, some additional difficulties will arise when we deal with the strong coupling and interplay interaction between the fluid motion and the magnetic field.
The following key observations help us to deal with the boundary terms and the interaction of the magnet field and the velocity field very well.
First, we obtain the estimate on $L{^\infty}(0,T;L{^p})$-norm $(p\geq 2)$ of the magnetic field $H$ in 2D bounded domain with Dirichlet condition (see Lemma \eqref{lem-h-lp}).
It is essential to ensure that we follows the idea \cite{vk1995,jwx2014,mei2015} to get the estimate on the $L^\infty(0,T;L^p(\Omega))$-norm $(p\geq 1)$ of the density $\rho$ (see Lemma \eqref{lem-rhop}). It also can used to deal with the magnetic force and the convection term, such as the $H \cdot \nabla H$ and $u\cdot \nabla H$.
Moreover, in view of the structure of the magnet equation $\eqref{CMHD}_3$, we can obtain the estimate on the $L{^2}(0,T;L{^2})$-norm of $\Delta H$ and $H_t$ and observe that $L{^2}$-norm of $\nabla^2 H$ is equivalent to that of $\Delta H$.
Second, thanks to \cite{Aramaki2014,Von1992}, Lemma \ref{lem-vn} allows us to control $\nabla u$ by means of $\mathrm{div} u$ and $\curl u$ due to the boundary condition $u\cdot n=0$ on $\partial\Omega$. Furthermore, this boundary condition helps us reduce the integral representation to the desired commutator form. Thus we may derive the precise control of $F$. Third, we deduce from the momentum equation $\eqref{CMHD}_2$ and the slip boundary \eqref{navier-b} in 2D case that the effective  viscous flux $F$ and the vorticity $\omega$ solves the Neumann problem and the Dirichlet problem respectively (see \eqref{de-f} and \eqref{de-om}). Then standard $L^p$ theory yields the estimates of $\nabla F$ and $\nabla\omega$.
In addition, since $u\cdot n=0$ on $\partial\Omega$, it follows that
\begin{align}\label{bdd2}
\displaystyle u\cdot\nabla u\cdot n=-u\cdot\nabla n\cdot u,\quad u=(u \cdot n^{\bot})n^{\bot}.
\end{align}
As observed in \cite{cl2019}, this is the key to estimate the integrals on the boundary $\partial\Omega$, especially the trace of $\nabla u$ on $\partial\Omega$ (see \eqref{a1-i1},\eqref{J11-1}, \eqref{J11-2}).
Similarly, one can get
\begin{align}\label{bdd3}
\displaystyle \big(\dot{u}-(u\cdot n^{\bot})u \cdot \nabla n^{\bot}\big)\cdot n=0,
\end{align}
which yields the estimate of $\dot{u}$ and $\nabla \dot{u}$ (see \eqref{udot} and \eqref{tdudot}).
Finally, in order to estimate the derivatives of the solutions, we recall the similar Beale-Kato-Majda-type inequality with the respect to the slip boundary condition to prove the important estimates on the gradients of the density (see \eqref{bkm}).

An outline of the paper is as follows.
In Section \ref{se2}, we list some elementary inequalities and important lemmas that we use intensively in the paper.
Section \ref{se3} is devoted to deriving the upper bounded of the density which plays an essential role in the whole procedure.
Based on the previous estimates, the lower and higher order estimates are established in Section \ref{se4}.
Finally, the proof of Theorem \ref{th1}-\ref{th2} will be completed in Section \ref{se5}.

\section{Preliminaries}\label{se2}
In this section, we review some elementary inequalities and important lemmas that are used extensively in this paper.
To begin with, the following local existence theory of \eqref{CMHD}, where the initial density is strictly away from vacuum, can be found in \cite{fy2009,k1984,djj2013}.
\begin{lemma}\label{lem-local}
Assume that the initial data $(\rho_0,u_0,H_0)$ satisfies
\begin{align}\label{dt2}
\inf\limits_{x\in \Omega}\rho_0(x)>0,\quad\rho_0\in H^2,\quad m_0=\rho_0 u_0,\quad (u_0, H_0)\in H^2,\quad \mathrm{div} H_0=0,
\end{align}
and the boundary conditions \eqref{navier-b}-\eqref{boundary}.
Then there exist a small time $T_0>0$ and a constant $C_0>0$ both depending only on $\Omega$, $T$, $\mu$, $\beta$, $\gamma$, $\nu$, $\|\rho_0\|_{H^2}$, $\|u_0\|_{H^2}$, $\|H_0\|_{H^2}$, and $\inf\limits_{x\in \Omega}\rho_0(x)$ such that there exists a unique strong solution $(\rho,u,H)$ of the system \eqref{CMHD}-\eqref{boundary} in $\Omega \times (0,T_0]$, satisfying that
\begin{equation}\label{esti-uh-local}
\begin{cases}
\rho\in C([0, T_0]; H^2 ), \, \rho_t\in C([0, T_0]; H^1 ), \\
(u,H)\in L^\infty(0, T_0;H^2 )\cap L^{2}(0, T_0;H^3), \\
(u_t,H_t)\in L^2(0, T_0;H^2 ), \\
(u_{tt},H_{tt})\in L^2((0, T_0)\times\Omega),
\end{cases}
\end{equation}
and
\begin{equation}\label{rho-local}
	\inf_{(x,t)\in \Omega\times (0,T_0)} \rho(x,t)\geq C_0>0.
\end{equation}
\end{lemma}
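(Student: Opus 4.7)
My plan is to prove this by a linearization-plus-iteration scheme combined with a contraction-mapping argument, which is the classical route for local strong solutions of viscous compressible flows (see \cite{fy2009,djj2013,k1984}). Set $(\rho^0,u^0,H^0)=(\rho_0,u_0,H_0)$ extended constantly in time, and, given the $k$-th iterate, define $(\rho^{k+1},u^{k+1},H^{k+1})$ as the solution of the decoupled linearized system: the linear transport equation $\partial_t\rho^{k+1}+\mathrm{div}(\rho^{k+1}u^k)=0$ for the density; a linear parabolic equation for $u^{k+1}$ with coefficients $\mu$ and $\mu+(\rho^{k+1})^\beta$, convection $\rho^{k+1}u^k\cdot\nabla u^{k+1}$, and source $-\nabla P(\rho^{k+1})+H^k\cdot\nabla H^k-\tfrac12\nabla|H^k|^2$, subject to \eqref{navier-b}; and a linear parabolic equation for $H^{k+1}$ with transport by $u^{k+1}$ and Dirichlet condition \eqref{boundary}. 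All iterates share the initial data $(\rho_0,u_0,H_0)$.

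The uniform estimates proceed in the order density--velocity--magnetic field. The transport equation for $\rho^{k+1}$ is solved along the characteristics of $u^k$; the strictly positive lower bound $\inf\rho_0>0$ propagates on a short interval because $\|\mathrm{div}\,u^k\|_{L^1_tL^\infty_x}$ stays small there, and an $H^2$ bound on $\rho^{k+1}$ follows from standard transport estimates using the $H^2$-regularity of $u^k$. With $\rho^{k+1}$ bounded above and away from zero, the $u^{k+1}$-equation is uniformly parabolic of second order, and standard $L^p$-parabolic theory (exploiting the equivalence $\|\nabla u\|_{H^1}\lesssim\|\mathrm{div}\,u\|_{H^1}+\|\curl u\|_{H^1}$ valid under $u\cdot n=0$, to be recalled as Lemma \ref{lem-vn}) yields $u^{k+1}\in L^\infty(0,T_0;H^2)\cap L^2(0,T_0;H^3)$ and $\partial_t u^{k+1}\in L^2(0,T_0;H^2)$; the analogous bounds for $H^{k+1}$ come from the parabolic Dirichlet theory. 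The constraint $\mathrm{div}\,H^{k+1}=0$ propagates from the initial datum because taking divergence of the $H^{k+1}$-equation produces a linear transport equation for $\mathrm{div}\,H^{k+1}$. All these bounds are uniform in $k$ on a common short interval $[0,T_0]$.

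To pass to the limit, I would show the sequence is Cauchy in the weaker norm $L^\infty_tL^2_x\cap L^2_tH^1_x$ for the differences $(\rho^{k+1}-\rho^k,u^{k+1}-u^k,H^{k+1}-H^k)$ by subtracting consecutive linearized systems and running a standard energy estimate, closed via Gronwall on a possibly smaller $T_0$. The limit $(\rho,u,H)$ satisfies the full system \eqref{CMHD}--\eqref{boundary} with the regularity \eqref{esti-uh-local}; the higher regularity $(u_{tt},H_{tt})\in L^2(\Omega\times(0,T_0))$ comes from differentiating the momentum and induction equations in time and using the compatibility obtainable from $H^2$ initial data. Uniqueness follows from an analogous energy estimate applied to the difference of two strong solutions, exploiting the positive lower bound of $\rho$.

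The principal obstacle I anticipate is maintaining the strictly positive lower bound of $\rho^{k+1}$ \emph{uniformly} in the iteration index, which is essential because the bulk viscosity $\lambda(\rho)=\rho^\beta$ must remain bounded above and bounded away from zero for the parabolic theory applied to $u^{k+1}$ to hold; this is what forces the lifespan $T_0$ to depend on $\inf_{x}\rho_0(x)$. A secondary difficulty is the treatment of the boundary integrals in the $H^2$-energy estimate for $u^{k+1}$ under the Navier-slip conditions $u\cdot n=0$ and $\curl u=0$: these force the decomposition of $\nabla u^{k+1}$ into its $\mathrm{div}$ and $\curl$ parts and a careful use of the $C^{2,1}$ regularity of $\partial\Omega$ to handle the normal derivative terms generated upon integration by parts, mirroring the manipulations in \eqref{bdd2}--\eqref{bdd3} that are highlighted in the introduction.
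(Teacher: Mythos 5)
You should first be aware that the paper does not prove Lemma \ref{lem-local} at all: it is quoted as a known local well-posedness result and attributed to \cite{fy2009,k1984,djj2013}. Your linearization--iteration--contraction scheme is exactly the classical argument behind those references, so at the level of strategy you are reconstructing the ``standard'' proof rather than deviating from the paper, and most of your steps (transport estimate for the density with the lower bound propagating on a short time interval, uniform parabolicity of the momentum equation once $\rho^{k+1}$ is pinned between positive constants, Lemma \ref{lem-vn} to convert div/curl control into full gradient control under \eqref{navier-b}, contraction in the weak norm $L^\infty_tL^2_x\cap L^2_tH^1_x$, time-differentiation for $u_{tt},H_{tt}$) are the right ones. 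Two technical remarks: to propagate the $H^2$ bound of $\rho^{k+1}$ you need $\nabla u^k$ in $L^1(0,T_0;H^2)$ (i.e.\ the $L^2(0,T_0;H^3)$ bound of the iterate, which you do list), not merely its $L^\infty_tH^2$ regularity, so this should be invoked explicitly when closing the uniform estimates.

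The one step that would fail as written is the propagation of the constraint $\mathrm{div}\,H^{k+1}=0$. Taking the divergence of \eqref{CMHD}$_3$ does not give a transport equation: it gives the convection--diffusion equation $(\mathrm{div}H)_t+\mathrm{div}(u\,\mathrm{div}H)=\nu\Delta(\mathrm{div}H)$, and under the Dirichlet condition \eqref{boundary} there is no boundary condition available for $\mathrm{div}H$, so neither a characteristics argument nor the naive energy estimate (whose boundary contribution $\nu\int_{\partial\Omega}\mathrm{div}H\,\partial_n(\mathrm{div}H)\,dS$ is uncontrolled) closes. Worse, in a decoupled linear iterate there is no reason for $\mathrm{div}H^{k+1}$ to vanish at all, since the source terms built from $(u^k,H^k)$ are not divergence-compatible. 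The standard remedies are either to build the constraint into the approximation (e.g.\ a Galerkin basis of divergence-free fields vanishing on $\partial\Omega$), or to define the magnetic iterate through the curl--curl form $H_t-\nabla\times(u\times H)+\nabla\times(\nu\,\nabla\times H)=0$ (viewing the planar fields as three-dimensional ones): both the stretching term and the dissipative term are then perfect curls, so $\partial_t\,\mathrm{div}H^{k+1}\equiv 0$ pointwise and $\mathrm{div}H^{k+1}=\mathrm{div}H_0=0$ follows with no boundary information, after which the identity $\Delta H=\nabla\,\mathrm{div}H-\nabla\times(\nabla\times H)$ recovers the form \eqref{CMHD}$_3$ used in the paper. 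With this repair (and the $L^2_tH^3$ point above made explicit) your outline matches the argument of the cited local existence results.
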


Next, the following well-known Gagliardo-Nirenberg's inequality (see \cite{nir1959,tale1976}) will be used later.
\begin{lemma}\label{lem-gn}
Assume that $\Omega$ is a bounded Lipschitz domain in $\r^2$. For  $p\in [2,\infty),q\in(1,\infty), $ and
$ r\in  (2,\infty),$ there exists some generic
 constant
$C>0$ which may depend  on $p,\,\,q,$ and $r$ such that for   $f\in H^1({\O }) $
and $g\in  L^q(\O )\cap D^{1,r}(\O), $    we have
\be\la{g1}\|f\|_{L^p(\O)}\le C_1 p^{\frac{1}{2}}\|f\|_{L^2}^{\frac{2}{p}}\|\na
f\|_{L^2}^{1-\frac{2}{p}}+C_2p^{\frac{1}{2}}\|f\|_{L^2} ,\ee
\be\la{g2}\|g\|_{C\left(\ol{\O }\right)} \le C_1
\|g\|_{L^q}^{q(r-2)/(2r+q(r-2))}\|\na g\|_{L^r}^{2r/(2r+q(r-2))} + C_2\|g\|_{L^2}.
\ee
Moreover, if $f\cdot n|_{\partial\Omega}=0,\,\,\,g\cdot n|_{\partial\Omega}=0,$ or $\bar{f}=0$, $\bar{g}=0$, then the constant $C_2=0.$
\end{lemma}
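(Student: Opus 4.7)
The plan is to prove both estimates on $\mathbb{R}^2$ first with sharp constants and then transfer them to $\Omega$ by a bounded Lipschitz extension, absorbing the remaining additive $L^2$ term via a Poincar\'e argument in the vanishing-condition cases. Since $\partial\Omega$ is Lipschitz, there is an extension operator $E\colon H^1(\Omega)\to H^1(\mathbb{R}^2)$ with norm depending only on $\Omega$; once the whole-space bound is in hand, applying it to $Ef$ and splitting via $(a+b)^{1-2/p}\le a^{1-2/p}+b^{1-2/p}$ produces the two-term form of \eqref{g1}, and an analogous extension/splitting handles \eqref{g2}.

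The core step I need to establish is the $\sqrt{p}$-sharp estimate on $\mathbb{R}^2$,
$$\|u\|_{L^p(\mathbb{R}^2)}\le C\sqrt{p}\,\|u\|_{L^2}^{2/p}\|\nabla u\|_{L^2}^{1-2/p},\qquad p\in[2,\infty),$$
with $C$ independent of $p$. I would derive this from the Moser--Trudinger inequality: normalizing $\|u\|_{L^2}=\|\nabla u\|_{L^2}=1$, the bound $\int_{\mathbb{R}^2}e^{\alpha u^2}\,dx\le M$ expands as $\sum_{k\ge 0}(\alpha^k/k!)\|u\|_{L^{2k}}^{2k}\le M$, and Stirling's formula then forces $\|u\|_{L^{2k}}\le C\sqrt{k}$ uniformly in $k$; log-convexity of $L^p$-norms fills in non-integer exponents while preserving the $\sqrt{p}$ factor. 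For \eqref{g2} I would apply the same interpolation recipe to Morrey's embedding $W^{1,r}(\mathbb{R}^2)\hookrightarrow C^{0,1-2/r}$ ($r>2$), writing $\|g\|_{L^\infty}\le C\|g\|_{L^q}^{1-\theta}\|\nabla g\|_{L^r}^\theta$ with $\theta=2r/(2r+q(r-2))$ determined by demanding scale-invariance of the right-hand side under $g(x)\mapsto g(\lambda x)$.

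Finally, to justify $C_2=0$ in the refined case, the hypothesis $\bar f=0$ gives the classical Poincar\'e inequality $\|f\|_{L^2}\le C\|\nabla f\|_{L^2}$, which absorbs the additive correction into the gradient term. The boundary hypothesis $f\cdot n|_{\partial\Omega}=0$ (with $f$ interpreted as a vector field on the simply connected $\Omega$) yields the same bound via the div--curl estimate later stated as Lemma \ref{lem-vn}, combined with a Gaffney-type inequality $\|\nabla f\|_{L^2}\le C(\|\mathrm{div}\,f\|_{L^2}+\|\mathrm{curl}\,f\|_{L^2})$. The main obstacle is not deriving any one ingredient in isolation but bookkeeping the $\sqrt{p}$-constant cleanly through the extension step: one must verify the extension operator introduces no $p$-dependence (it does not, acting only at the $H^1$ level), and one must route the whole-space estimate through Moser--Trudinger rather than the crude $H^1\hookrightarrow L^p$ embedding, which would forfeit the sharp $\sqrt{p}$ factor that is indispensable for the density estimates later in the paper.
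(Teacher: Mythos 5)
The paper itself gives no proof of this lemma: it is quoted as the classical Gagliardo--Nirenberg inequality with a citation to Nirenberg and Talenti, so your proposal has to stand on its own. Its scaffolding (whole-space estimate with an explicit $\sqrt{p}$ constant, a Stein-type extension for the Lipschitz domain, the splitting $(a+b)^{1-2/p}\le a^{1-2/p}+b^{1-2/p}$ to generate the additive $C_2$-term, and Poincar\'e absorption when $\bar f=0$) is sound, and the Moser--Trudinger/Stirling route does produce the $\sqrt{p}$ growth; but two details need repair. On $\mathbb{R}^2$ the bound $\int e^{\alpha u^2}\,dx\le M$ is false as written, since the integrand tends to $1$ at infinity; you must use the form $\int\bigl(e^{\alpha u^2}-1\bigr)dx\le M$ (Adachi--Tanaka/Ruf) and start the series at $k=1$, after which the argument goes through. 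Similarly, for \eqref{g2} the extension yields $\|g\|_{L^q}^{1-\theta}\bigl(\|\na g\|_{L^r}+\|g\|_{L^r}\bigr)^{\theta}$, and converting the spurious term into $C_2\|g\|_{L^2}$ needs an extra interpolation/Young absorption step (e.g. $\|g\|_{L^r}\le\|g\|_{L^\infty}^{1-q/r}\|g\|_{L^q}^{q/r}$ for $r\ge q$, then absorb $\|g\|_{L^\infty}$ into the left-hand side), which you gesture at but do not carry out.

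The genuine gap is the case $f\cdot n|_{\partial\Omega}=0$ in the claim $C_2=0$. What is needed there is a Poincar\'e-type inequality $\|f\|_{L^2}\le C\|\na f\|_{L^2}$ for vector fields with vanishing normal trace, and the two facts you invoke do not provide it: Lemma \ref{lem-vn} and your ``Gaffney-type inequality'' are one and the same statement, and both bound $\|\na f\|_{L^2}$ by $\|\mathrm{div}\, f\|_{L^2}+\|\curl f\|_{L^2}$, i.e. they control the gradient by (part of) the gradient and say nothing about $\|f\|_{L^2}$; they also require $\Omega$ simply connected, which the present lemma does not assume. The correct argument is that no nonzero constant vector $c$ can satisfy $c\cdot n=0$ a.e.\ on $\partial\Omega$, because $0=\int_{\partial\Omega}(c\cdot x)\,c\cdot n\,dS=\int_\Omega |c|^2\,dx>0$; hence the standard compactness/contradiction proof of Poincar\'e's inequality applies to the class $\{f\in H^1:\ f\cdot n|_{\partial\Omega}=0\}$ (the normal-trace condition survives weak $H^1$ limits), giving $\|f\|_{L^2}\le C\|\na f\|_{L^2}$. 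With this in hand, $\|f\|_{L^2}\le C^{1-2/p}\|f\|_{L^2}^{2/p}\|\na f\|_{L^2}^{1-2/p}\le \max(C,1)\,\|f\|_{L^2}^{2/p}\|\na f\|_{L^2}^{1-2/p}$, uniformly in $p$, and the $C_2$-term is absorbed exactly as in the mean-zero case; the same remark applies to $g\cdot n|_{\partial\Omega}=0$ in \eqref{g2}.
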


Next, we need the $L^p$ theory for the div-curl system to control the gradient of velocity. The following conclusion is given in \cite{Von1992,Aramaki2014}.
\begin{lemma}\label{lem-vn}
Let $1<q<+\infty,$ $\Omega$ is a bounded domain in $\r^2$ with Lipschitz boundary $\partial\Omega.$ For $v\in W^{1,q}$, if $\Omega$ is simply connected and $v\cdot n=0$ on $\partial\Omega$, then it holds that
\begin{equation}\label{tdu1}
\|\nabla v\|_{L^q}\leq C(\|\mathrm{div} v\|_{L^q}+\|\curl v\|_{L^q}).	
\end{equation}
\end{lemma}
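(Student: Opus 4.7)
The plan is to realise $v$ through a Hodge-type decomposition tailored to the no-penetration condition $v\cdot n=0$, and then apply $L^q$ elliptic regularity for the two scalar potentials. Since $\Omega\subset\mathbb{R}^{2}$ is simply connected, I expect to write $v=\nabla\phi+\nabla^{\perp}\psi$ uniquely once $\phi,\psi$ are normalised, reducing the gradient bound on $v$ to Calder\'on--Zygmund bounds for $\nabla^{2}\phi$ and $\nabla^{2}\psi$.

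First, I introduce the two auxiliary potentials. Let $\phi$ solve the Neumann problem $\Delta\phi=\mathrm{div}\,v$ in $\Omega$ with $\partial_n\phi=0$ on $\partial\Omega$; this is solvable because $\int_{\Omega}\mathrm{div}\,v\,dx=\int_{\partial\Omega}v\cdot n\,dS=0$ by hypothesis on $v$. Let $\psi$ solve the Dirichlet problem $\Delta\psi=\mathrm{curl}\,v$ in $\Omega$ with $\psi=0$ on $\partial\Omega$. Standard $L^q$ theory for the Laplacian on a bounded planar domain with sufficiently regular boundary yields
\begin{equation*}
\|\nabla^{2}\phi\|_{L^q}\le C\|\mathrm{div}\,v\|_{L^q},\qquad \|\nabla^{2}\psi\|_{L^q}\le C\|\mathrm{curl}\,v\|_{L^q}.
\end{equation*}

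The main point, which I anticipate to be the only delicate step, is to show that the remainder $w:=v-\nabla\phi-\nabla^{\perp}\psi$ vanishes identically. Directly, $\mathrm{div}\,w=\mathrm{div}\,v-\Delta\phi=0$, and the two-dimensional identity $\mathrm{curl}(\nabla^{\perp}\psi)=\Delta\psi$ gives $\mathrm{curl}\,w=\mathrm{curl}\,v-\Delta\psi=0$. On $\partial\Omega$, the Dirichlet condition $\psi=0$ forces $\nabla\psi$ to be normal to $\partial\Omega$, so $\nabla^{\perp}\psi$ is tangential and satisfies $\nabla^{\perp}\psi\cdot n=0$; combined with $v\cdot n=0$ and $\partial_n\phi=0$ this yields $w\cdot n=0$ on $\partial\Omega$. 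Because $\Omega$ is simply connected, $\mathrm{curl}\,w=0$ furnishes a scalar potential $\chi$ with $w=\nabla\chi$, and then $\chi$ is harmonic with vanishing Neumann data, hence constant, so $w\equiv 0$.

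With the exact representation $v=\nabla\phi+\nabla^{\perp}\psi$ in hand, the triangle inequality and the two regularity estimates above give
\begin{equation*}
\|\nabla v\|_{L^q}\le \|\nabla^{2}\phi\|_{L^q}+\|\nabla^{2}\psi\|_{L^q}\le C\bigl(\|\mathrm{div}\,v\|_{L^q}+\|\mathrm{curl}\,v\|_{L^q}\bigr),
\end{equation*}
which is the claim. One mild caveat is that the $L^q$ Calder\'on--Zygmund bounds for the Neumann and Dirichlet problems need more than Lipschitz regularity of $\partial\Omega$ in full generality; in the application to the main theorems the $C^{2,1}$ hypothesis on $\partial\Omega$ is already in force and comfortably covers this.
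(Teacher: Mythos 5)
The paper never proves this lemma: it is stated as a quoted result and attributed to von Wahl \cite{Von1992} and Aramaki \cite{Aramaki2014}, so there is no in-paper argument to compare against. Your proof via the decomposition $v=\nabla\phi+\nabla^{\perp}\psi$, with $\phi$ solving the Neumann problem for $\mathrm{div}\, v$ (compatible because $v\cdot n=0$) and $\psi$ the Dirichlet problem for $\curl v$, is a correct and standard route to the estimate, and is essentially the potential-theoretic argument underlying the cited references. Your verification that the remainder $w$ vanishes is sound: $\mathrm{div}\, w=\curl w=0$, the Dirichlet condition makes $\nabla^{\perp}\psi$ tangential while the Neumann condition kills $\nabla\phi\cdot n$, and on a simply connected domain a curl-free field tangent to the boundary with zero divergence is the gradient of a harmonic function with vanishing Neumann data, hence constant. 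The one genuine caveat is the one you already flagged: the $W^{2,q}$ Calder\'on--Zygmund estimates for the Dirichlet and Neumann problems require more boundary regularity than Lipschitz (for merely Lipschitz $\partial\Omega$ they can fail when $q$ is far from $2$), so your argument proves the lemma for, say, $C^{1,1}$ boundaries rather than in the Lipschitz generality of the statement; since the paper only invokes the lemma on a $C^{2,1}$ domain, nothing downstream is affected. A small technical point worth adding for $1<q<2$: to conclude that the harmonic potential $\chi$ with $\partial_n\chi=0$ is constant via the energy identity, note that $\nabla\chi=w\in W^{1,q}\hookrightarrow L^{2}$ in two dimensions, so the integration by parts $\int|\nabla\chi|^{2}dx=0$ is legitimate.
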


Besides, we require the Beale-Kato-Majda type inequality with respect to the slip boundary condition \eqref{navier-b} which is given in \cite{BKM1984,cl2019}.
\begin{lemma}\label{lem-bkm}
For $2<q<\infty$, $\Omega$ is a bounded domain in $\r^2$ with Lipschitz boundary $\partial \Omega$, assume that $u\cdot n=0$ and $\curl u=0$ on $\partial\Omega$, $\nabla u\in W^{1,q}(\Omega)$, then there is a constant  $C=C(q)$ such that  the following estimate holds
\begin{equation}
\|\na u\|_{L^\infty}\le C\left(\|{\rm div}u\|_{L^\infty}+\|\curl u\|_{L^\infty} \right)\ln(e+\|\na^2u\|_{L^q})+C\|\na u\|_{L^2} +C .\label{bkm}	
\end{equation}
\end{lemma}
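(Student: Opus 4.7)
The plan is to adapt the classical Beale--Kato--Majda argument to the slip-boundary setting by combining the div-curl $L^p$ estimate from Lemma \ref{lem-vn} with sharp $p$-dependence, followed by a Gagliardo--Nirenberg interpolation with $p$ chosen logarithmically in $\|\nabla^2 u\|_{L^q}$.

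The first step would be to upgrade Lemma \ref{lem-vn} to a quantitative version that tracks the growth of the constant as $p\to\infty$. Since $\Omega$ is simply connected and $u\cdot n=0$ on $\partial\Omega$, I would use the Hodge-type decomposition $u=\nabla\phi+\nabla^\perp\psi$, where $\phi$ solves $\Delta\phi=\mathrm{div}\,u$ with $\partial\phi/\partial n=0$ (solvable by the compatibility $\int_{\partial\Omega} u\cdot n\,dS=0$), and $\psi$ solves $\Delta\psi=-\mathrm{curl}\,u$ with $\psi|_{\partial\Omega}=0$. The sharp $L^p$ Calder\'on--Zygmund estimates for the Neumann/Dirichlet Laplacian on a $C^{2,1}$ bounded domain, whose operator norms grow linearly in $p$ as $p\to\infty$, produce
\begin{equation*}
\|\nabla u\|_{L^p}\leq Cp\bigl(\|\mathrm{div}\,u\|_{L^p}+\|\mathrm{curl}\,u\|_{L^p}\bigr)\leq Cp\,M,\qquad p\in[2,\infty),
\end{equation*}
where $M:=\|\mathrm{div}\,u\|_{L^\infty}+\|\mathrm{curl}\,u\|_{L^\infty}$ (using $|\Omega|<\infty$ to pass from $L^\infty$ to $L^p$), with $C$ independent of $p$.

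Second, I would apply the Gagliardo--Nirenberg inequality \eqref{g2} to $g=\nabla u$ with its $L^q$-index set to the variable $p$ and its $W^{1,r}$-index set to the fixed $q>2$ of the statement. Writing $N:=\|\nabla^2 u\|_{L^q}$ and $\beta(p):=2q/(2q+p(q-2))=O(1/p)$, this yields
\begin{equation*}
\|\nabla u\|_{L^\infty}\leq C\|\nabla u\|_{L^p}^{1-\beta(p)}\,N^{\beta(p)}+C\|\nabla u\|_{L^2}.
\end{equation*}
Now choose $p:=2+\log(e+N)$. Then $\beta(p)\log N\leq 2q/(q-2)$, so $N^{\beta(p)}$ is bounded by a constant depending only on $q$. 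Combining with the first step and the elementary bound $x^{1-\beta}\leq 1+x$ valid for $x\geq 0$ and $\beta\in(0,1)$,
\begin{equation*}
\|\nabla u\|_{L^\infty}\leq C(1+pM)+C\|\nabla u\|_{L^2}\leq CM\log(e+N)+C\|\nabla u\|_{L^2}+C,
\end{equation*}
which is the claimed inequality.

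The main obstacle is the sharp linear-in-$p$ growth of the Calder\'on--Zygmund constant for the Laplace problems on the bounded $C^{2,1}$ domain $\Omega$; any sublinear improvement would break the $M\log(\cdot)$ structure after optimization. On $\r^2$ or $\mathbb{T}^2$ this growth is classical, but here it requires either careful tracking of constants in a localization/reflection argument near $\partial\Omega$, or appeal to the known sharp $L^p$ bounds for boundary-value Laplace problems on $C^{1,1}$ domains. Once that input is secured, the optimization of $p$ and the handling of lower-order terms are routine, and the slip boundary $u\cdot n=0$, $\curl u=0$ enters only through Lemma \ref{lem-vn} to eliminate the harmonic part of $u$.
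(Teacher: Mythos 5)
The paper itself offers no proof of Lemma \ref{lem-bkm}; it is imported from \cite{BKM1984,cl2019}. Judged on its own terms, your strategy (a quantitative div--curl bound $\|\nabla u\|_{L^p}\le CpM$ with $C$ independent of $p$, followed by Gagliardo--Nirenberg interpolation with the lower exponent chosen as $p\simeq\ln(e+\|\nabla^2u\|_{L^q})$) is one of the standard ways such logarithmic estimates are proved, and your final optimization is carried out correctly: with $\beta(p)=2q/(2q+p(q-2))$ and $p=2+\log(e+N)$ one indeed gets $N^{\beta(p)}\le C(q)$ and the $CM$ and constant terms are absorbed into $CM\ln(e+N)+C$, which is \eqref{bkm}.

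However, two quantitative inputs are asserted rather than established, and they are exactly where the content of the lemma lies. First, the linear-in-$p$ growth of the $W^{2,p}$ constants for the Neumann and Dirichlet Laplacian on the bounded domain: you correctly identify this as the main obstacle, but it is left as an appeal to ``known sharp bounds''; note also that it is false on a merely Lipschitz domain (as the lemma is loosely stated), so the argument must be run on the $C^{2,1}$ domain actually used in the paper, with the $O(p)$ constant obtained by localization/flattening plus the classical Calder\'on--Zygmund growth. Second, and not flagged by you: you invoke \eqref{g2} with its lower integrability index equal to your growing parameter $p$, but Lemma \ref{lem-gn} as stated allows the constant $C_1$ to depend on that index, so the inequality cannot be quoted verbatim; you need a version of \eqref{g2} whose constant is uniform for $p\in[2,\infty)$ with the derivative exponent $q>2$ fixed (true, via the Morrey estimate $|g(x)-g(y)|\le C_q\|\nabla g\|_{L^q}|x-y|^{1-2/q}$ combined with a mean-value bound $|\bar g_{B_\rho}|\le C\rho^{-2/p}\|g\|_{L^p}$, whose constants are bounded uniformly in $p\ge2$), and this should be stated and checked since otherwise a $p$-dependent constant would destroy the logarithm. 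Finally, some bookkeeping in the decomposition needs repair: with $\nabla^{\bot}=(-\partial_2,\partial_1)$ one has $\curl\nabla^{\bot}\psi=\Delta\psi$, so $\psi$ should solve $\Delta\psi=\curl u$, $\psi|_{\partial\Omega}=0$; one then argues that $u-\nabla^{\bot}\psi$ is curl-free and tangential (using $\nabla^{\bot}\psi\cdot n=-\partial_\tau\psi=0$ on $\partial\Omega$), hence a gradient $\nabla\phi$ because $\Omega$ is simply connected, and $\phi$ solves the Neumann problem with the stated data. Note also that the hypothesis $\curl u=0$ on $\partial\Omega$ is never used in your argument; that is not an error, but it signals that your route differs from the cited proofs, where that condition enters.
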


Moreover, in order to estimate the material derivative of $u$, we review the following Poincar\'{e}-type inequality of $\dot u$ (see \cite{cl2019,fll2021}).
\begin{lemma}\label{lem-ud}
For $p\ge 1,$
there exist   positive constants $C_1(p,\Omega)$ and  $C_2(\Omega)$ such that
\begin{align}
&\|\dot{u}\|_{L^p}\le C(\|\nabla\dot{u}\|_{L^2}+\|\nabla u\|_{L^2}^2),\label{udot}\\
&\|\nabla\dot{u}\|_{L^2}\le C(\|\mathrm{div} \dot{u}\|_{L^2}+\|\curl \dot{u}\|_{L^2}+\|\nabla u\|_{L^4}^2).\label{tdudot}
\end{align}
\end{lemma}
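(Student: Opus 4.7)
The plan is to exploit the boundary identity (bdd3), which shows that the corrected vector field $\dot u-(u\cdot n^\perp)(u\cdot\nabla n^\perp)$ has vanishing normal component on $\partial\Omega$, thereby reducing both estimates to Poincar\'e / Gagliardo--Nirenberg / div-curl inequalities for vector fields with zero normal trace on a simply connected domain, where Lemma \ref{lem-vn} and the $C_2=0$ case of Lemma \ref{lem-gn} apply.

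Concretely, I would first smoothly extend $n$ and $n^\perp$ from $\partial\Omega$ into a tubular neighborhood and multiply by a cutoff, obtaining $\tilde n,\tilde n^\perp\in C^\infty(\overline\Omega)$. Setting
\[
v:=(u\cdot\tilde n^\perp)(u\cdot\nabla\tilde n^\perp),\qquad w:=\dot u-v,
\]
one has $w\cdot n|_{\partial\Omega}=0$ by (bdd3), with pointwise $|v|+|\nabla v|\le C(|u|^2+|u||\nabla u|)$. For the second inequality, Lemma \ref{lem-vn} applied to $w$ gives
\[
\|\nabla w\|_{L^2}\le C(\|\mathrm{div}\, w\|_{L^2}+\|\curl w\|_{L^2}).
\]
Expanding $w=\dot u-v$ and using H\"older together with the Sobolev/Poincar\'e estimate $\|u\|_{L^4}\le C\|\nabla u\|_{L^2}\le C\|\nabla u\|_{L^4}$ (the Poincar\'e being available because $u\cdot n=0$ on $\partial\Omega$) yields $\|\nabla v\|_{L^2}\le C\|\nabla u\|_{L^4}^2$, whence the inequality (tdudot).

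For the first inequality, apply Gagliardo--Nirenberg (Lemma \ref{lem-gn}) to $w$ with $C_2=0$ (justified by $w\cdot n|_{\partial\Omega}=0$) and combine with the Poincar\'e inequality $\|w\|_{L^2}\le C\|\nabla w\|_{L^2}$ for vector fields with vanishing normal trace on a simply connected bounded domain (the only constant vector field in this class is zero, so a standard compactness argument gives the inequality). This produces $\|w\|_{L^p}\le C\|\nabla w\|_{L^2}$, and then
\[
\|\dot u\|_{L^p}\le\|w\|_{L^p}+\|v\|_{L^p}\le C\|\nabla\dot u\|_{L^2}+C\|\nabla v\|_{L^2}+C\|u\|_{L^{2p}}^2,
\]
where Sobolev plus Poincar\'e control $\|u\|_{L^{2p}}^2\le C\|\nabla u\|_{L^2}^2$.

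The main obstacle is the sharp bound $\|\nabla v\|_{L^2}\le C\|\nabla u\|_{L^2}^2$ required for (udot), which is strictly stronger than the $C\|\nabla u\|_{L^4}^2$ sufficient for (tdudot). To obtain it, I would replace the pointwise-defined correction $v$ by a Bogovskii-type solenoidal lifting: solve $\mathrm{div}\, V=0$ in $\Omega$ with $V\cdot n=-(u\cdot\nabla n)\cdot u$ on $\partial\Omega$ (compatibility holds since $u\cdot n=0$), giving $\|V\|_{H^1}\le C\|(u\cdot\nabla n)u\|_{H^{1/2}(\partial\Omega)}$. A multiplicative-trace estimate on $\partial\Omega$, together with $\|u\|_{H^1(\Omega)}\le C\|\nabla u\|_{L^2}$, then absorbs the excess $\nabla u$-factor and closes the bound. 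Handling these fractional boundary estimates carefully is the only delicate point; once in place, the scheme above delivers both (udot) and (tdudot).
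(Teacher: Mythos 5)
You reproduce \eqref{tdudot} correctly, and in essentially the same spirit as the sources the paper cites for this lemma (\cite{cl2019,fll2021}; the paper itself gives no proof): by \eqref{bdd3} the corrected field $w=\dot u-(u\cdot \tilde n^{\bot})(u\cdot\nabla\tilde n^{\bot})$ has vanishing normal trace, Lemma \ref{lem-vn} applies to $w$, and the correction obeys $\|\nabla v\|_{L^2}\le C(\|u\|_{L^4}\|\nabla u\|_{L^4}+\|u\|_{L^4}^2)\le C\|\nabla u\|_{L^4}^2$, which is all that \eqref{tdudot} requires.

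For \eqref{udot}, however, there is a genuine gap, and it sits exactly where you yourself flag the ``only delicate point''. First, the compatibility claim for the solenoidal lifting is false: a divergence-free $V$ with $V\cdot n=-(u\cdot\nabla n)\cdot u$ on $\partial\Omega$ requires $\int_{\partial\Omega}(u\cdot\nabla n)\cdot u\,ds=0$, but since $u=(u\cdot n^{\bot})n^{\bot}$ on $\partial\Omega$ this integral equals $\int_{\partial\Omega}\kappa|u|^2\,ds$ ($\kappa$ the curvature of $\partial\Omega$), which is strictly positive on the unit disc for any nontrivial tangential $u$; the condition $u\cdot n=0$ does not give it. That defect is repairable (allow $\mathrm{div}V$ to be a suitable constant), but the second and decisive problem is the bound $\|V\|_{H^1(\Omega)}\le C\|(u\cdot\nabla n)\cdot u\|_{H^{1/2}(\partial\Omega)}\le C\|\nabla u\|_{L^2}^2$: it hinges on the critical multiplicative trace estimate $\||u|^2\|_{H^{1/2}(\partial\Omega)}\le C\|u\|_{H^1(\Omega)}^2$, where $H^{1/2}(\partial\Omega)$ on a one-dimensional boundary is the exact trace space of $H^1(\Omega)$, is not an algebra and is not contained in $L^\infty$; such an endpoint product estimate cannot be invoked as standard, and obtaining the boundary contribution with $\|\nabla u\|_{L^2}^2$ rather than $\|\nabla u\|_{L^4}^2$ is the whole content of \eqref{udot}, so deferring it leaves the estimate unproved. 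A simpler correct route avoids any lifting: since a constant vector $c$ with $c\cdot n=0$ a.e.\ on $\partial\Omega$ must vanish, a compactness argument yields the Poincar\'e-type inequality $\|v\|_{L^2}\le C(\|\nabla v\|_{L^2}+\|v\cdot n\|_{L^2(\partial\Omega)})$ for $v\in H^1(\Omega)$; applying it to $v=\dot u$, using $\dot u\cdot n=-u\cdot\nabla n\cdot u$ on $\partial\Omega$ (from \eqref{bdd2} and $u_t\cdot n=0$ on $\partial\Omega$), the trace embedding $H^1(\Omega)\hookrightarrow L^4(\partial\Omega)$ and Poincar\'e for $u$ gives $\|\dot u\cdot n\|_{L^2(\partial\Omega)}\le C\|u\|_{L^4(\partial\Omega)}^2\le C\|\nabla u\|_{L^2}^2$, hence $\|\dot u\|_{L^2}\le C(\|\nabla\dot u\|_{L^2}+\|\nabla u\|_{L^2}^2)$, and \eqref{udot} follows from Lemma \ref{lem-gn}. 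This only needs $u\in L^4(\partial\Omega)$, not $|u|^2\in H^{1/2}(\partial\Omega)$.
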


To this end, following the idea of \cite{fll2021}, we recall the Riemann mapping theorem (\cite[Chapter 9]{ss2003}) to introduce the conformal mapping $\varphi=(\varphi_1, \varphi_2):\bar{\Omega}\rightarrow\bar{\mathbb{D}}$, which ensures that we can pull back Green's function of the unit disc $\mathbb{D}$ and reduce the general case to that of unit disc. For the sake of completeness, we require the following lemma (\cite{ss2003,wars1961,fll2021}).

\begin{lemma}\label{lem-varphi}
The conformal mapping $\varphi=(\varphi_1, \varphi_2):\bar{\Omega}\rightarrow\bar{\mathbb{D}}$ is a smooth function satisfying the following Cauchy-Riemann equations:
\begin{equation}\label{c-r}
\begin{cases}
\partial_1\varphi_1=\partial_2\varphi_2, \\
\partial_2\varphi_1=-\partial_1\varphi_2.
       \end{cases}
\end{equation}
and shares the following crucial properties:

(i) $\varphi(x)$ is a one to one holomorphic mapping from $\Omega$ to $\mathbb{D}$ and maps the boundary $\partial\Omega$ onto the boundary $\partial\mathbb{D}$.

(ii) For any integer $k=0,1,2$, there exists some constant $C$ depending only on $\Omega$ and $k$ such that
\begin{equation}\label{vp1}
|\nabla^k\varphi(x)|\leq C,\,\,\, \forall x\in\bar{\Omega}.
\end{equation}
Consequently,
\begin{equation}\label{vp2}
|\nabla^k\varphi(x)-\nabla^k\varphi(y)|\leq C|x-y|,\,\,\, \forall x, y\in\bar{\Omega}.
\end{equation}

(iii) There exist  two positive constants $c_1, c_2$ such that
\begin{equation}\label{vp3}
c_1|x-y|\leq|\varphi(x)-\varphi(y)|\leq c_2|x-y|.
\end{equation}

(iv) Angle is preserved by $\varphi(x)$, that is, for any two smooth curves $\gamma_1(t), \gamma_2(t):(0, 1)\rightarrow\bar{\Omega}$,
\begin{equation}\label{angle}
\langle\frac{d}{dt}\gamma_1(t), \frac{d}{dt}\gamma_2(t)\rangle
=\langle\frac{d}{dt}\varphi(\gamma_1(t)), \frac{d}{dt}\varphi(\gamma_2(t))\rangle,\,\,\, \forall t\in(0, 1).
\end{equation}

(v) For any harmonic function $h(y)$ in $\mathbb{D}$, $h(\varphi(x))$ is still harmonic in $\Omega$, that is,
\begin{equation}\label{harm}
\Delta h(y)=0\ \mathrm{in}\ \mathbb{D}\Rightarrow\Delta h(\varphi(x))=0\ \mathrm{in}\ \Omega.
\end{equation}

(vi) Let $n$ and $\widetilde{n}$ be the unit outer normal vector of $\partial\Omega$ at $x_0$ and $\partial\mathbb{D}$ at $\varphi(x_0)$ respectively.  Then
\be  \label{vp4}
n\cdot\nabla\varphi(x_0)=|\nabla\varphi_1(x_0)|\,\widetilde{n}.
\ee
\end{lemma}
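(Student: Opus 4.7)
The proof is essentially a bookkeeping exercise assembling classical facts about conformal mappings, so my plan is to organize these facts and indicate where each assumption on $\Omega$ enters. First, the Riemann mapping theorem applied to the simply connected bounded domain $\Omega$ yields a biholomorphism $\varphi:\Omega\to\mathbb{D}$, which is the first half of (i). Since $\partial\Omega$ is $C^{2,1}$, the Kellogg--Warschawski boundary regularity theorem (this is exactly where the $C^{2,1}$ hypothesis is used) extends $\varphi$ to a $C^{2,\alpha}$ diffeomorphism $\bar{\Omega}\to\bar{\mathbb{D}}$ with $\varphi'(x)\neq 0$ on $\bar{\Omega}$; this completes (i) and simultaneously gives the uniform bounds $|\nabla^k\varphi(x)|\leq C$ for $k=0,1,2$ in (ii). The Cauchy--Riemann equations \eqref{c-r} are merely the coordinate form of holomorphy.

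For (iii), the upper Lipschitz bound $|\varphi(x)-\varphi(y)|\leq c_2|x-y|$ is immediate from (ii) by integrating $\nabla\varphi$ along a straight segment in $\bar{\Omega}$, or by the mean value theorem on the connected compact set $\bar{\Omega}$. The lower bound is obtained by applying the same argument to the inverse $\varphi^{-1}:\bar{\mathbb{D}}\to\bar{\Omega}$, which is itself $C^{2,\alpha}$ by Kellogg--Warschawski and whose derivative is bounded because $|\varphi'|$ is bounded away from zero on the compact set $\bar{\Omega}$.

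Properties (iv) and (v) are classical consequences of holomorphy. Angle preservation is the defining geometric feature of a holomorphic map with non-vanishing derivative: analytically, $d\varphi$ acts on tangent vectors as a composition of scaling by $|\varphi'|$ and a rotation, so inner products are preserved up to the common factor $|\varphi'|^2$, which drops out of the angle formula in (iv). For (v), a direct chain-rule computation that uses \eqref{c-r} to kill the cross terms gives the well-known conformal identity
\begin{equation*}
\Delta_x\bigl(h\circ\varphi\bigr)(x)=\bigl|\nabla\varphi_1(x)\bigr|^2\,(\Delta_y h)\bigl(\varphi(x)\bigr),
\end{equation*}
so harmonicity of $h$ on $\mathbb{D}$ pulls back to harmonicity of $h\circ\varphi$ on $\Omega$.

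The final identity (vi) follows from (iv), \eqref{c-r} and the boundary-to-boundary property in (i). Since $\varphi$ maps $\partial\Omega$ onto $\partial\mathbb{D}$ diffeomorphically, the unit tangent $n^{\bot}$ at $x_0$ is sent to a tangent vector of $\partial\mathbb{D}$ at $\varphi(x_0)$; by angle preservation, the image of $n$ must then be orthogonal to that tangent vector, i.e.\ parallel to $\widetilde{n}$. The Cauchy--Riemann relations force $|\nabla\varphi_1|=|\nabla\varphi_2|$, identifying the scaling factor as $|\nabla\varphi_1(x_0)|$, which gives the stated equality. The one genuinely nontrivial ingredient is the boundary regularity used to pass from (i) to the pointwise bounds in (ii); everything else is a routine consequence of that regularity together with standard facts about holomorphic functions, so I would present (ii) as the technical heart and everything after it as corollaries.
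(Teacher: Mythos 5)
Your proposal is mathematically sound, but note that the paper itself offers no proof of this lemma at all: it is stated as a recalled result and attributed to the references \cite{ss2003}, \cite{wars1961} and \cite{fll2021}, so there is no internal argument to compare against line by line. What you have written is precisely the argument those sources supply — the Riemann mapping theorem for the biholomorphism in (i), the Kellogg--Warschawski boundary regularity theorem (this is indeed where the $C^{2,1}$ hypothesis on $\partial\Omega$ enters) for the extension to $\bar{\Omega}$ with non-vanishing derivative and the bounds \eqref{vp1}, and then routine holomorphy facts (Cauchy--Riemann, conformality, the identity $\Delta(h\circ\varphi)=|\nabla\varphi_1|^2(\Delta h)\circ\varphi$, and $|\varphi'|=|\nabla\varphi_1|$ for (vi)) — so your sketch legitimately fills in the citation rather than taking a different route. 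Two small points deserve care if you write this out in full. First, in (iii) the straight segment joining $x$ and $y$ need not stay in $\bar{\Omega}$ since $\Omega$ is not assumed convex; the upper bound should instead use that a bounded domain with $C^{2,1}$ boundary is quasiconvex (its interior geodesic distance is comparable to the Euclidean distance), while your argument for the lower bound via $\varphi^{-1}$ is unproblematic because $\bar{\mathbb{D}}$ is convex. Second, the Lipschitz estimate \eqref{vp2} for $k=2$ does not follow from the $C^{2,\alpha}$ regularity you invoke (that only gives a H\"older modulus for $\nabla^2\varphi$, and from the bounds in \eqref{vp1} alone one gets \eqref{vp2} only for $k\le 1$); to get it for $k=2$ one needs $\nabla^2\varphi$ Lipschitz up to the boundary, i.e.\ the full strength of the $C^{2,1}$ boundary regularity in Warschawski's theorem, which is how the cited sources use the hypothesis. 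With these two adjustments your write-up is a complete and faithful proof of the lemma as the paper intends it.
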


\section{A priori estimates (I): upper bound of the density}\label{se3}
In this section, we will establish the upper bound of density which is independent of the lower one of the initial density. In what follows, we always assume that $(\rho_0,u_0,H_0)$ satisfies \eqref{dt2} and $(\rho,u,H)$ is the strong solution to \eqref{CMHD}-\eqref{boundary} on $\Omega\times(0,T]$ obtained by Lemma \eqref{lem-local}.

We review that $F$ and $\omega$ are the effective viscous flux and the vorticity respectively as follows:
\begin{equation}
F\triangleq (2\mu+\lambda)\mathop{\mathrm{div}}\nolimits u-P-\frac{1}{2}|H|^2,
\quad \omega \triangleq\nabla^{\bot}\cdot u=\partial_1 u_2-\partial_2 u_1.
\end{equation}
Thus we define
\begin{align}
&  A_1(t) \triangleq e+\|F/\sqrt{2\mu+\lambda(\rho)}(t)\|_{L^2} +\|\omega(t)\|_{L^2} +\|\nabla H(t)\|_{L^2},\label{As1}\\
& A_2(t)  \triangleq \|\sqrt{\rho}\dot{u}(t)\|_{L^2}+\|\Delta H(t)\|_{L^2}+\|H_t(t)\|_{L^2},\label{As2}\\
& R_T \triangleq 1+ \sup_{  0\le t\le T   } \|\rho(t)\|_{L^\infty}.\label{As3}
\end{align}

Before going further, we rewrite \eqref{CMHD} in the following form:
\begin{equation}\label{CMHD1}
\begin{cases}
P_t+ \mathop{\mathrm{div}}\nolimits(P u)+(\gamma-1)P \mathop{\mathrm{div}}\nolimits u=0,\\
\rho u_t+\rho u \cdot \nabla u - \nabla((2\mu+\lambda(\rho))\mathop{\mathrm{div}}\nolimits u)-\mu\nabla^{\bot}\omega + \nabla P=H \cdot \nabla H -\frac{1}{2}\nabla|H|^2,\\
H_t-\nu \Delta H= H \cdot \nabla u-u \cdot\nabla H-H \mathop{\mathrm{div}}\nolimits u,
\\
\mathop{\mathrm{div}}\nolimits H=0,
\end{cases}
\end{equation}
Multiplying $\eqref{CMHD1}_1$ by $\frac{1}{\gamma-1}$, $\eqref{CMHD1}_2$ by $u$ and $\eqref{CMHD1}_3$ by $H$ respectively, integrating by parts over $\Omega$, summing them up, in view of \eqref{navier-b} and \eqref{boundary}, we obtain that
\begin{equation}\label{m2}
\begin{split}
\displaystyle  &\left(\int \Big(\frac{\rho^\gamma}{\gamma-1}+\frac{1}{2}\rho |u|^{2}+\frac{1}{2}|H|^{2}\Big)dx\right)_t  \\
&+ \int(\lambda(\rho) + 2\mu)(\mathrm{div} u)^{2}dx+ \mu\int\omega^{2}dx + \nu\int|\nabla H|^{2}dx =0,	
\end{split}
\end{equation}
which, integrated over $(0,T)$, leads to the following elementary energy estimates.
\begin{lemma}\label{lem-basic}
 Let $(\rho,u,H)$ be a smooth solution of \eqref{CMHD}-\eqref{boundary} on $\O \times (0,T]$. Then
\begin{equation}\label{basic1}
\begin{split}
\displaystyle  &\sup_{0\le t\le T}
\left(\frac{1}{2}\|\sqrt{\rho}u\|_{L^2}^2+\frac{1}{\gamma-1}\|\rho\|_{L^\gamma}^\gamma+\frac{1}{2}\|H\|_{L^2}^2\right)\\
 &+ \int_0^{T}\|\sqrt{2\mu+\lambda(\rho)}\mathrm{div} u\|_{L^2}^2 +\mu\|\omega\|_{L^2}^2+\nu \|\nabla H\|_{L^2}^2)dt \le E_0,
\end{split}
\end{equation}
where $E_0\triangleq\frac{1}{2}\|\sqrt{\rho_0}u_0\|_{L^2}^2+\frac{1}{\gamma-1}\|\rho_0\|_{L^\gamma}^\gamma+\frac{1}{2}\|H_0\|_{L^2}^2$.
\end{lemma}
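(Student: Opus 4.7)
The statement is essentially the time-integrated form of the pointwise identity (m2) that has already been written down in the excerpt, so the plan is to justify that identity rigorously and then conclude by integrating in $t$ and taking the supremum.

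Concretely, I would test three equations: the renormalized pressure equation $\eqref{CMHD1}_1$ against $1/(\gamma-1)$, the momentum equation $\eqref{CMHD1}_2$ against $u$, and the induction equation $\eqref{CMHD1}_3$ against $H$, then integrate over $\Omega$ and sum. Using $\rho_t+\mathrm{div}(\rho u)=0$, the inertial terms combine into $\frac{d}{dt}\int\frac{1}{2}\rho|u|^2\,dx$. The transport integral $\int\mathrm{div}(Pu)\,dx$ vanishes by $u\cdot n=0$, and the $-\int P\,\mathrm{div}\,u\,dx$ produced by integrating $\int\nabla P\cdot u\,dx$ by parts cancels the $+\int P\,\mathrm{div}\,u\,dx$ coming from the pressure equation. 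For the viscous terms, $-\int\nabla((2\mu+\lambda)\mathrm{div}\,u)\cdot u\,dx=\int(2\mu+\lambda)(\mathrm{div}\,u)^2\,dx$ once more by $u\cdot n=0$, and $-\mu\int\nabla^\perp\omega\cdot u\,dx=\mu\int\omega^2\,dx$ because the resulting boundary integral is killed by $\omega=\curl u=0$ on $\partial\Omega$. For the magnetic dissipation, $-\nu\int\Delta H\cdot H\,dx=\nu\int|\nabla H|^2\,dx$ with no boundary contribution thanks to $H|_{\partial\Omega}=0$.

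The one place requiring genuine care is the cancellation of the magneto-mechanical coupling. The Lorentz work from the momentum side, $\int(H\cdot\nabla H-\tfrac{1}{2}\nabla|H|^2)\cdot u\,dx$, after integration by parts using $\mathrm{div}\,H=0$ and $H|_{\partial\Omega}=0$, equals $-\int H_iH_j\partial_ju_i\,dx+\tfrac{1}{2}\int|H|^2\mathrm{div}\,u\,dx$. On the induction side, the stretching term $H\cdot\nabla u$ tested against $H$ contributes $+\int H_iH_j\partial_ju_i\,dx$, the compression term $-H\,\mathrm{div}\,u$ contributes $-\int|H|^2\mathrm{div}\,u\,dx$, and the transport term $-(u\cdot\nabla H)\cdot H$ integrates to $+\tfrac{1}{2}\int|H|^2\mathrm{div}\,u\,dx$ (the boundary term vanishes by either $u\cdot n=0$ or $H=0$). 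Summing all four coupling contributions yields zero, so the magneto-mechanical interaction drops out and only the three dissipations remain. This produces (m2); integrating over $(0,T)$ and taking the supremum in $t$ gives the stated estimate with $E_0$ equal to the initial value of the total energy. There is no serious obstacle — the only thing to be careful about is that every boundary integral produced by the many integrations by parts is indeed killed by one of the three boundary conditions $u\cdot n=0$, $\curl u=0$, $H|_{\partial\Omega}=0$.
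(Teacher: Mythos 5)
Your proposal is correct and follows exactly the paper's own route: the paper obtains the identity \eqref{m2} by testing $\eqref{CMHD1}_1$ against $\frac{1}{\gamma-1}$, $\eqref{CMHD1}_2$ against $u$, and $\eqref{CMHD1}_3$ against $H$, using the boundary conditions \eqref{navier-b}--\eqref{boundary} to kill all boundary terms, and then integrates over $(0,T)$. Your verification of the cancellation of the magneto-mechanical coupling terms and of the boundary terms simply fills in the details the paper leaves implicit.
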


\begin{remark}\label{rem:energy}
According to Lemma \ref{lem-vn}, it follows from \eqref{navier-b}, \eqref{basic1} and Poincar\'{e}'s inequality that
\begin{equation}\label{u-h1}
\displaystyle \int_0^T\|u\|_{H^1}^2dt \le C\int_0^T\|\nabla u\|_{L^2}^2dt \le C.
\end{equation}
\end{remark}

Next, we derive the estimates on $L{^\infty}(0,T;L{^p})$-norm of the magnetic field $H$ in 2D case which is a fundamental observation to deal with the coupling and interplay interaction between the fluid motion and the magnetic field.
\begin{lemma}\label{lem-h-lp}
For any $p\geq 2$, there exists a positive constant $C$ such that
\begin{equation}\label{h-lp}
\sup\limits_{0\leq t\leq T}\|H\|_{L^p}^p+\int_0^T\int|H|^{p-2}|\nabla H|^2dxdt\leq C.
\end{equation}
\end{lemma}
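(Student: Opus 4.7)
The plan is to test the magnetic equation $\eqref{CMHD}_3$ against $|H|^{p-2}H$, integrate over $\Omega$, and close a Gr\"onwall inequality using only the energy bound $\int_0^T\|\nabla u\|_{L^2}^2 dt\le C$ from Lemma \ref{lem-basic} (together with Remark \ref{rem:energy}). The Dirichlet boundary condition $H=0$ on $\partial\Omega$ is essential; the condition $u\cdot n=0$ will kill the convective boundary contribution.

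First, I would multiply $H_t+u\cdot\nabla H-H\cdot\nabla u+H\,\mathrm{div}\,u=\nu\Delta H$ by $|H|^{p-2}H$ and integrate. The time term produces $\frac{1}{p}\frac{d}{dt}\|H\|_{L^p}^p$. The convection term becomes $-\frac{1}{p}\int|H|^p\mathrm{div}\,u\,dx$ by integration by parts using $u\cdot n|_{\partial\Omega}=0$. Combining with the remaining two drift terms yields
\begin{equation*}
\frac{1}{p}\frac{d}{dt}\|H\|_{L^p}^p + I = \nu\int\Delta H\cdot|H|^{p-2}H\,dx,
\end{equation*}
where $I=(1-1/p)\int|H|^p\mathrm{div}\,u\,dx-\int H_iH_j|H|^{p-2}\partial_i u_j\,dx$ satisfies $|I|\le C\int|H|^p|\nabla u|\,dx$. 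Because $H|_{\partial\Omega}=0$, integration by parts on the right-hand side gives
\begin{equation*}
\nu\int\Delta H\cdot|H|^{p-2}H\,dx = -\nu\int|H|^{p-2}|\nabla H|^2\,dx - \nu(p-2)\int|H|^{p-4}\sum_{i}(H\cdot\partial_i H)^2\,dx,
\end{equation*}
and the second term is non-positive when $p\ge 2$, so we may drop it. Hence
\begin{equation*}
\frac{1}{p}\frac{d}{dt}\|H\|_{L^p}^p + \nu\int|H|^{p-2}|\nabla H|^2\,dx \le C\int|H|^p|\nabla u|\,dx.
\end{equation*}

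The key step, and the step I expect to be the main obstacle, is to bound $\int|H|^p|\nabla u|\,dx$ without any $L^\infty$ information on $\nabla u$. The plan is to use Cauchy--Schwarz to write $\int|H|^p|\nabla u|\,dx\le\|\nabla u\|_{L^2}\|H\|_{L^{2p}}^p$, and then apply Ladyzhenskaya's inequality to $f=|H|^{p/2}$, which vanishes on $\partial\Omega$. This yields $\||H|^{p/2}\|_{L^4}^4\le C\||H|^{p/2}\|_{L^2}^2\|\nabla|H|^{p/2}\|_{L^2}^2$; together with $|\nabla|H|^{p/2}|^2\le(p/2)^2|H|^{p-2}|\nabla H|^2$ this gives
\begin{equation*}
\|H\|_{L^{2p}}^p \le Cp\,\|H\|_{L^p}^{p/2}\left(\int|H|^{p-2}|\nabla H|^2\,dx\right)^{1/2}.
\end{equation*}
An application of Young's inequality then absorbs half of the dissipation term into the left-hand side, leaving
\begin{equation*}
\frac{d}{dt}\|H\|_{L^p}^p + \frac{\nu p}{2}\int|H|^{p-2}|\nabla H|^2\,dx \le Cp\bigl(1+\|\nabla u\|_{L^2}^2\bigr)\|H\|_{L^p}^p.
\end{equation*}

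Finally, I would invoke Gr\"onwall's inequality. Since $\int_0^T\|\nabla u\|_{L^2}^2\,dt\le C$ by Remark \ref{rem:energy}, and $\|H_0\|_{L^p}\le C$ for $p\ge 2$ from the smooth approximation of the initial data in Lemma \ref{lem-local} (or, for $p$ not dictated directly by the hypotheses, via interpolation between $\|H_0\|_{L^2}$ and $\|H_0\|_{L^\infty}$ in the a priori setting), we obtain
\begin{equation*}
\sup_{0\le t\le T}\|H\|_{L^p}^p + \int_0^T\!\!\int|H|^{p-2}|\nabla H|^2\,dx\,dt \le C,
\end{equation*}
which is exactly \eqref{h-lp}. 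The only subtle points are the correct use of the Dirichlet boundary condition to eliminate boundary terms and to unlock Ladyzhenskaya, and the combinatorial bookkeeping that makes the dissipative term genuinely coercive in $|H|^{p-2}|\nabla H|^2$ for all $p\ge 2$.
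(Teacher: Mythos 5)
Your proposal is correct and follows essentially the same route as the paper: testing $\eqref{CMHD}_3$ against $|H|^{p-2}H$, using $H|_{\partial\Omega}=0$ and $u\cdot n|_{\partial\Omega}=0$ to handle the boundary terms, bounding $\int|H|^p|\nabla u|\,dx$ via Ladyzhenskaya (Gagliardo--Nirenberg with vanishing trace) applied to $|H|^{p/2}$, absorbing half the dissipation by Young, and closing with Gr\"onwall through $\int_0^T\|\nabla u\|_{L^2}^2\,dt\le C$ from \eqref{u-h1}. No gaps.
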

\begin{proof}
Multiplying the equation \eqref{CMHD}$_3$ by $p|H|^{p-2}H$ and integrating over $\Omega$, we obtain, by \eqref{boundary} and the Gagliardo-Nirenberg inequality \eqref{g1}, that
\begin{equation}\label{h-lp1}
\begin{split}
&\frac{d}{dt}\int|H|^pdx+\nu p\int|H|^{p-2}|\nabla H|^2dx+\nu p\int\nabla\frac{|H|^2}{2}\cdot\nabla|H|^{p-2}dx \\
&=(1-p)\int|H|^p\mathrm{div} udx+p\int H\cdot \nabla u\cdot |H|^{p-2}Hdx \\
&\leq C\int|H|^p|\nabla u|dx\leq C\|H^p\|_{L^2}\|\nabla u\|_{L^2}\leq C\|H^\frac{p}{2}\|_{L^2}\|\nabla|H|^\frac{p}{2}\|_{L^2}\|\nabla u\|_{L^2} \\
&\leq \frac{\nu p}{2}\int|H|^{p-2}|\nabla H|^2dx+C\|\nabla u\|{_{L^2}^2}\|H\|{_{L^p}^p}.
\end{split}
\end{equation}
which yields that
\begin{equation}
\frac{d}{dt}\|H\|_{L^p}^p+\frac{\nu p}{2}\int|H|^{p-2}|\nabla H|^2dx\leq C\|\nabla u\|{_2^2}\|H\|_{L^p}^p.
\end{equation}
Applying Gronwall's inequality and using \eqref{u-h1}, we have \eqref{h-lp} and finish the proof of Lemma \ref{lem-h-lp}.
\end{proof}

Next, we state a known result concerning the estimate on the $L^\infty(0,T;L^p(\Omega))$-norm of the density  whose proof is similar to that of  \cite{vk1995,jwx2014,mei2015}.
 \begin{lemma}\label{lem-rhop}
Let $\beta >1.$ Then, for any $1\leq p<\infty$, there is a constant $C$ depending on $\Omega$, $T$, $\mu$, $\beta$, $\gamma$, $\nu$, $\|\rho_0\|_{L^\infty}$, $\|u_0\|_{H^1}$ and $\|H_0\|_{H^1}$,
such that
\begin{equation}\label{rhop}
\|\rho\|_{L^{p}}\leq C(T)p^{\frac{2}{\beta-1}}.
\end{equation}
\end{lemma}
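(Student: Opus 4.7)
The plan is to adapt the Vaigant-Kazhikov iterative argument used in \cite{vk1995,jwx2014,mei2015} for the torus and whole-space settings, and in \cite{fll2021} for general simply connected bounded domains in the Navier-Stokes case, to the present MHD problem. The two new ingredients needed to absorb the magnetic contributions are the uniform $L^r$ bounds on $H$ from Lemma \ref{lem-h-lp} together with the basic dissipation $\int_0^T\|\nabla H\|_{L^2}^2\,dt\leq C$ from Lemma \ref{lem-basic}.

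First I would derive the differential identity for $\|\rho(t)\|_{L^p}^p$. Multiplying the continuity equation in \eqref{CMHD} by $p\rho^{p-1}$ and integrating over $\Omega$ (using $u\cdot n=0$ on $\partial\Omega$) gives $\frac{d}{dt}\int \rho^p\,dx = -(p-1)\int \rho^p\,\mathrm{div}\,u\,dx$. Substituting $(2\mu+\rho^\beta)\mathrm{div}\,u = F + \rho^\gamma + \tfrac12|H|^2$ from \eqref{flux} yields
\[
\frac{d}{dt}\|\rho\|_{L^p}^p + (p-1)\int \frac{\rho^{p+\gamma}}{2\mu+\rho^\beta}\,dx = -(p-1)\int \frac{\rho^p F}{2\mu+\rho^\beta}\,dx - \frac{p-1}{2}\int \frac{\rho^p|H|^2}{2\mu+\rho^\beta}\,dx.
\]
The coercive term on the left, combined with the $L^\gamma$ control from \eqref{basic1} and a splitting on $\{\rho\leq M\}\cup\{\rho>M\}$ using $\beta>1$, dominates $\|\rho\|_{L^p}^p$ up to a bounded constant. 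The magnetic integral on the right is estimated by H\"older together with Lemma \ref{lem-h-lp}, which allows it to be absorbed into an $\varepsilon$-fraction of the dissipation plus a controlled lower-order term.

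The main difficulty is the $F$-contribution. Following \cite{fll2021}, I would take $\mathrm{div}$ of the momentum equation in \eqref{CMHD1} and use the slip condition \eqref{navier-b} together with $H|_{\partial\Omega}=0$ from \eqref{boundary} to see that $F$ solves a Neumann problem with source $\mathrm{div}(\rho\dot u - H\cdot\nabla H)$. Pulling back the unit-disc Green's function \eqref{gre1} via the conformal map of Lemma \ref{lem-varphi} and applying the representation formula \eqref{flux2} then produces a pointwise integral expression for $F$ on $\Omega$. Substituting this expression into the $F$-integral, carefully tracking the dependence of every Calder\'on-Zygmund / Gagliardo-Nirenberg constant on $p$ via Lemma \ref{lem-gn}, and absorbing the magnetic contributions using Lemma \ref{lem-h-lp}, I would arrive at a differential inequality for $\|\rho(t)\|_{L^p}^p$ whose coefficient grows like a fixed power of $p$ and is integrable in $t$. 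Gronwall's inequality, together with an iteration in $p$ exactly as in \cite{vk1995,jwx2014,mei2015}, then yields \eqref{rhop}. The hardest point is replacing the classical commutator identity (available only on $\mathbb{T}^2$ and $\mathbb{R}^2$) by this Green's function machinery in the general bounded-domain geometry, while simultaneously handling the extra magnetic terms in the Neumann source.
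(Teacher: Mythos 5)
The paper itself does not prove Lemma \ref{lem-rhop}: it states it as a known result whose proof is ``similar to that of \cite{vk1995,jwx2014,mei2015}''. Your opening steps (the identity for $\frac{d}{dt}\|\rho\|_{L^p}^p$, insertion of $\mathrm{div}\,u=(F+P+\tfrac12|H|^2)/(2\mu+\lambda)$, and the use of Lemma \ref{lem-h-lp} for the magnetic contribution, which in fact enters with a favorable sign and could simply be dropped) are consistent with that route. The genuine gap is in your treatment of the $F$-contribution. You propose to represent $F$ through the Neumann problem with source $\mathrm{div}(\rho\dot u-H\cdot\nabla H)$, substitute the pointwise Green's-function formula, and then assert that the resulting coefficient in the differential inequality for $\|\rho\|_{L^p}^p$ ``is integrable in $t$''. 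At the stage of Lemma \ref{lem-rhop} the only available inputs are the energy estimate of Lemma \ref{lem-basic} and Lemma \ref{lem-h-lp}; there is no bound of any kind on $\rho\dot u$. The quantity $A_2$ of \eqref{As2} is controlled only later (Lemma \ref{lem-a1a2}, Lemma \ref{lem-a0}), and those proofs themselves use \eqref{rhop} (see \eqref{a1-0}, \eqref{f-l2}, \eqref{tdu-lp1}, \eqref{a1-i4}). So as written your estimate of the $F$-term is either unsupported or circular.

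What the cited arguments actually do at this point is avoid $\dot u$ altogether: one writes $\rho\dot u=(\rho u)_t+\mathrm{div}(\rho u\otimes u)$ and pulls the time derivative out of the potential, so that $F$ is expressed, up to boundary and harmonic corrections, as a material derivative of a quantity built from $\rho u$ plus commutator-type terms in $\rho u\otimes u$ (this is exactly the structure \eqref{f-123}); the transported piece is handled after integration in time, and the remaining terms are estimated using only quantities controlled by the elementary energy inequality (e.g.\ $\rho|u|^2\in L^\infty(0,T;L^1)$, $\sqrt{2\mu+\lambda}\,\mathrm{div}\,u\in L^2(0,T;L^2)$), the bounds of Lemma \ref{lem-h-lp}, and interpolation whose $p$-dependent constants are precisely what produce the exponent $2/(\beta-1)$. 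None of this reorganization appears in your sketch, the boundary contribution (the analogue of $F_3$ in \eqref{f3}, absent in the periodic and whole-space settings of \cite{vk1995,jwx2014,mei2015}) is left untouched, and the claimed growth ``a fixed power of $p$'' is never tied to a mechanism yielding $p^{\frac{2}{\beta-1}}$. A further, secondary, inaccuracy: your assertion that the dissipative term $(p-1)\int\rho^{p+\gamma}/(2\mu+\rho^\beta)\,dx$ ``dominates $\|\rho\|_{L^p}^p$ up to a bounded constant'' fails when $\gamma<\beta$ (then $p+\gamma-\beta<p$), and no relation between $\gamma$ and $\beta$ is assumed in \eqref{b-g}.
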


In the following, we will use the convention that $C$ denotes a generic positive constant depending on $\Omega$, $T$, $\mu$, $\beta$, $\gamma$, $\nu$, $\|\rho_0\|_{L^\infty}$, $\|u_0\|_{H^1}$ and $\|H_0\|_{H^1}$, and use $C(\alpha)$ to emphasize that $C$ depends on $\alpha$. We follow the ideas of \cite{hl2016,fll2021} for compressible Navier-Stokes equations and obtain the extra integrability up on the momentum $\rho u$ and typical estimates on $\nabla u$ in the following two lemmas, where we modify the proof of \cite{hl2016,fll2021} slightly due to the magnetic field and the boundary effect.
\begin{lemma}\label{lem-rho-u}
There exists some  suitably small generic constant  $\alpha_0\in (0,1)$   which depends only on $\mu$ and $\Omega$ such that
\begin{equation}\label{r-u-al}
\sup_{0\leq t\leq T}\int\rho |u|^{2+\alpha}dx\leq C,
\end{equation}
with
\be\label{al1}\alpha\triangleq R_T^{-\frac{\beta}{2}}\alpha_{0}.\ee
\end{lemma}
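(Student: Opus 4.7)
\textbf{Proof plan for Lemma \ref{lem-rho-u}.} The natural test function for an estimate of the form $\int \rho|u|^{2+\alpha}dx$ is $(2+\alpha)|u|^\alpha u$ itself. I would multiply the momentum equation $\eqref{CMHD1}_2$ by $(2+\alpha)|u|^\alpha u$, integrate over $\Omega$, and use the continuity equation in the time-derivative term to obtain
\[
\frac{d}{dt}\int \rho|u|^{2+\alpha}dx \;=\; (2+\alpha)\int \rho\dot u\cdot|u|^\alpha u\,dx.
\]
Rewriting the momentum equation in the form $\rho\dot u=\nabla F+\mu\nabla^{\bot}\omega+H\cdot\nabla H$ (via the definition \eqref{flux} of $F$) converts the right-hand side into a sum of three integrals, each of which I would integrate by parts. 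Because $u\cdot n=0$ on $\partial\Omega$ the boundary contribution from $\nabla F$ vanishes, and because $\omega=\curl u=0$ on $\partial\Omega$ (from \eqref{navier-b}) the boundary contribution from $\nabla^{\bot}\omega$ also vanishes; this is where the slip boundary condition plays an essential role.

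Next I would extract the dissipative terms. Writing $\mathrm{div}(|u|^\alpha u)=|u|^\alpha\mathrm{div} u+u\cdot\nabla|u|^\alpha$ and $\nabla^{\bot}\!\cdot(|u|^\alpha u)=|u|^\alpha\omega+(\nabla|u|^\alpha)^{\bot}\!\cdot u$, and then substituting $\mathrm{div} u=(F+P+\tfrac12|H|^2)/(2\mu+\lambda)$, I would isolate the two good terms
\[
(2+\alpha)\int \frac{F^2|u|^\alpha}{2\mu+\lambda}\,dx+\mu(2+\alpha)\int \omega^2|u|^\alpha\,dx,
\]
on the left side. The remaining cross terms contain either the factor $\alpha$ (from $\nabla|u|^\alpha=\alpha|u|^{\alpha-1}\nabla|u|$) or the pressure/magnetic contributions $(P+\tfrac12|H|^2)|u|^\alpha F/(2\mu+\lambda)$ and the coupling $\int H\cdot\nabla H\cdot|u|^\alpha u\,dx$.

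The cross terms involving $\nabla|u|^\alpha$ can be estimated by Cauchy--Schwarz against the two good terms, producing a coefficient proportional to $\alpha$; since $2\mu+\lambda\leq C R_T^\beta$, the precise scaling $\alpha=R_T^{-\beta/2}\alpha_0$ with $\alpha_0$ sufficiently small (depending only on $\mu$ and $\Omega$) guarantees these are absorbed. The pressure term is handled by H\"older using the $L^p$-bound on $\rho$ from Lemma \ref{lem-rhop}, and the magnetic coupling $H\cdot\nabla H\cdot|u|^\alpha u$ is controlled by splitting the product and invoking Lemma \ref{lem-h-lp} for high-$L^p$ bounds on $H$ together with the $L^2$-in-time control of $\nabla H$ from the basic energy estimate \eqref{basic1}. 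After these bounds, applying Gronwall on
\[
\frac{d}{dt}\int \rho|u|^{2+\alpha}dx \leq C\bigl(1+\|\nabla u\|_{L^2}^2+\|\nabla H\|_{L^2}^2\bigr)\Bigl(\int \rho|u|^{2+\alpha}dx+1\Bigr)
\]
and using \eqref{u-h1} together with $\int \rho_0|u_0|^{2+\alpha}dx\leq C$ closes the estimate.

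\textbf{Main obstacle.} The principal difficulty compared with the Navier--Stokes analogue \cite{fll2021} is the magnetic coupling: both the $H\cdot\nabla H\cdot|u|^\alpha u$ term and the $|H|^2/(2\mu+\lambda)$ contribution appearing inside the $F$-decomposition must be bounded without losing powers of $|u|$ that would spoil the absorption. This is precisely why Lemma \ref{lem-h-lp}, giving uniform-in-time $L^p$ bounds on $H$ for every $p\geq 2$, is indispensable here, and why the scaling of $\alpha$ by $R_T^{-\beta/2}$ must be chosen sharp enough to dominate the worst-case size of $2\mu+\lambda=2\mu+\rho^\beta$.
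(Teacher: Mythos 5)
Your overall strategy coincides with the paper's: test the momentum equation with $|u|^\alpha u$, exploit $u\cdot n=0$ and $\omega=0$ on $\partial\Omega$ to kill the boundary terms, absorb the $\alpha$-cross terms by choosing $\alpha=R_T^{-\beta/2}\alpha_0$ small, treat the pressure via Lemma \ref{lem-rhop}, the magnetic terms via Lemma \ref{lem-h-lp}, and close with Gronwall and \eqref{u-h1}. Whether you keep $(2\mu+\lambda)(\mathrm{div}u)^2|u|^\alpha$ as the good term (as the paper does) or substitute $\mathrm{div}u=(F+P+\tfrac12|H|^2)/(2\mu+\lambda)$ and keep $F^2|u|^\alpha/(2\mu+\lambda)$ (as you do) is a cosmetic difference.

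However, there is a genuine gap at the absorption step. After Cauchy--Schwarz, the cross terms coming from $\nabla|u|^\alpha$ are not dominated by your ``two good terms'': they produce the full weighted gradient, schematically
\begin{equation*}
\alpha\int |F|\,|u|^{\alpha}|\nabla u|\,dx\;\le\;\delta\int\frac{F^2|u|^\alpha}{2\mu+\lambda}\,dx
+C\alpha^{2}\int(2\mu+\lambda)|u|^{\alpha}|\nabla u|^{2}\,dx,
\end{equation*}
and similarly $\mu\alpha^2\int|u|^\alpha|\nabla u|^2dx$ from the vorticity part. The quantity $\int|u|^\alpha|\nabla u|^2dx$ is controlled neither by $\int|u|^\alpha F^2/(2\mu+\lambda)\,dx$ nor by $\int|u|^\alpha\omega^2dx$ pointwise, so ``absorbing against the good terms'' does not close as stated. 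What is needed is the weighted div--curl estimate
\begin{equation*}
\||u|^{\frac{\alpha}{2}}\nabla u\|_{L^2}\le C(\Omega)\bigl(\||u|^{\frac{\alpha}{2}}\mathrm{div}\, u\|_{L^2}+\||u|^{\frac{\alpha}{2}}\omega\|_{L^2}\bigr),
\end{equation*}
valid only for $\alpha$ in a small range $(0,\tilde\alpha(\Omega))$ and relying on $u\cdot n=0$ on $\partial\Omega$; this is the bounded-domain substitute for the commutator/integration-by-parts tricks available on $\mathbb{T}^2$ or $\mathbb{R}^2$, it is the first display of the paper's proof (see \eqref{tdu-a}), and it is precisely why $\alpha_0$ depends on $\Omega$ and not merely on $\mu$. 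With this inequality in hand, the scaling $\alpha^2(2\mu+\lambda)\le \alpha_0^2(2\mu+1)$ indeed lets you absorb both cross terms after choosing $\alpha_0$ small, and the rest of your plan (pressure term via \eqref{rhop}, magnetic term either in divergence form as in the paper or with $\nabla H\in L^2_tL^2_x$, then Gronwall) goes through as in the paper.
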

\begin{proof}
First, since $u \cdot n=0$ on $\partial \Omega$, there exist a positive constant $\tilde{\alpha}$ depending only on $\Omega$ such that for any $\alpha\in(0,\tilde{\alpha})$,
\begin{equation}\label{tdu-a}
\displaystyle \||u|^{\frac{\alpha}{2}}\nabla u\|_{L^2}\leq C(\||u|^{\frac{\alpha}{2}}\mathrm{div} u\|_{L^2}+\||u|^{\frac{\alpha}{2}}\omega\|_{L^2}).
\end{equation}
Then, following the  proof   of \cite[Lemma 3.7]{hl2016},
multiplying \eqref{CMHD1}$_2$ by $ |u|^\alpha u$ and integrating the resulting equality over $\Omega$ by parts, we arrive at
\begin{equation*}
\begin{split}
&\quad \frac{1}{2+\alpha}\frac{d}{dt}\int\rho |u|^{2+\alpha}dx+ \int|u|^\alpha\big((2\mu+\lambda)(\mathrm{div}u)^2+\mu\omega^2\big)dx\\
&\leq  \alpha\int(2\mu+\lambda)|\mathrm{div}u||u|^{\alpha}|\nabla u|dx+ \alpha\mu\int|u|^{\alpha}|\nabla u|^2dx\\
&\quad+C\int\rho^\gamma|u|^\alpha|\nabla u|dx+\alpha\int|H|^2|u|^{\alpha}|\nabla u|dx\\
&\leq \frac{1}{2}\int|u|^\alpha(2\mu+\lambda)(\mathrm{div}u)^2dx+\frac{\alpha_0^2(2\mu+1)}{2}\||u|^{\frac{\alpha}{2}}\nabla u\|_{L^2}^2+\alpha_0\mu\||u|^{\frac{\alpha}{2}}\nabla u\|_{L^2}^2\\
&\quad+C\int\rho|u|^{2+\alpha}dx +C\int\rho^{\frac{4+2\alpha}{2-\alpha}\gamma-\frac{2\alpha}{2-\alpha}}dx+C\|\nabla u\|_{L^2}^2+C \|H\|_{L^4}^4+C\|H\|_{L^8}^4\|u\|_{H^1}^2
\end{split}
\end{equation*}
which, after choosing $\alpha_0$ suitably small, together with  \eqref{u-h1}, \eqref{h-lp}, \eqref{tdu-a} and Gronwall's inequality yields \eqref{r-u-al} and  finishes the  proof of Lemma \ref{lem-rho-u}.
\end{proof}

\begin{lemma}\label{lem-u-lp}
For $p>2 $ and $\varepsilon\in (0,\frac{\beta}{p}),$ there exists some positive constant $C(p,\varepsilon)$ such that
\begin{equation}\label{tdu-lp}
\|\nabla u\|_{L^{p} }\leq C(p, \varepsilon)R_T^{\frac{1}{2}-\frac{1}{p}+\varepsilon} A_1  \left(\frac{A_2^2}{ A_1^2}\right)^{\frac{1}{2}-\frac{1}{p}}
+C(p, \varepsilon)R_T^{\varepsilon} A_1 .
\end{equation}
\end{lemma}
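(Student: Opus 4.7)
The plan is to apply the div--curl decomposition of Lemma \ref{lem-vn}, which, since $u\cdot n|_{\partial\Omega}=0$, yields $\|\nabla u\|_{L^p}\leq C(p)(\|\mathrm{div}\,u\|_{L^p}+\|\omega\|_{L^p})$. Writing $(2\mu+\lambda)\mathrm{div}\,u=F+P+\tfrac{1}{2}|H|^2$ and using $2\mu+\lambda\geq 2\mu$ together with Lemmas \ref{lem-rhop} and \ref{lem-h-lp} to bound $\|P\|_{L^p}+\|H\|_{L^{2p}}^2\leq C(p)$ uniformly in $R_T$ (harmlessly absorbed into $C(p,\varepsilon)R_T^\varepsilon A_1$ via $A_1\geq e$), the task reduces to estimating $\|F\|_{L^p}$ and $\|\omega\|_{L^p}$.

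For $\|\omega\|_{L^p}$: the scalar vorticity vanishes on $\partial\Omega$ and solves the Dirichlet problem $\mu\Delta\omega=\nabla^\perp\cdot(\rho\dot u-H\cdot\nabla H)$ (from $\nabla^\perp\cdot$ of the momentum equation). Testing against $\omega$ and Cauchy--Schwarz give
$\|\nabla\omega\|_{L^2}\leq C\|\rho\dot u\|_{L^2}+C\|H\cdot\nabla H\|_{L^2}\leq CR_T^{1/2}A_2+CA_1^{1/2}A_2^{1/2}+CA_1$,
where the magnetic term uses Lemma \ref{lem-h-lp} and the Gagliardo--Nirenberg bound $\|\nabla H\|_{L^4}\leq C(A_1^{1/2}A_2^{1/2}+A_1)$, together with $\|\nabla^2 H\|_{L^2}\leq C\|\Delta H\|_{L^2}\leq CA_2$ by Dirichlet $H^2$-regularity. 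Then \eqref{g1} yields $\|\omega\|_{L^p}\leq C(p)(A_1^{2/p}\|\nabla\omega\|_{L^2}^{1-2/p}+A_1)$; applying subadditivity $(x+y+z)^{1-2/p}\leq x^{1-2/p}+y^{1-2/p}+z^{1-2/p}$ for $p\geq 2$ and collapsing the mixed term $A_1^{1/p+1/2}A_2^{1/2-1/p}\leq A_1^{2/p}A_2^{1-2/p}+A_1$ (by a case-split on $A_1\lessgtr A_2$, using $1/2-1/p\leq 1-2/p$) gives $\|\omega\|_{L^p}\leq C(p)R_T^{1/2-1/p}A_1^{2/p}A_2^{1-2/p}+C(p)A_1$, matching the target form.

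For $\|F\|_{L^p}$: from $\nabla F=\rho\dot u-\mu\nabla^\perp\omega-H\cdot\nabla H$ and the boundary conditions $\omega|_{\partial\Omega}=H|_{\partial\Omega}=0$, $F$ solves the Neumann problem $\Delta F=\mathrm{div}(\rho\dot u-H\cdot\nabla H)$ with $\nabla F\cdot n=\rho\dot u\cdot n=-\rho\,u\cdot\nabla n\cdot u$ on $\partial\Omega$. I plan to exploit the pointwise Green's-function representation \eqref{flux2} for $F$, pulled back from $\mathbb{D}$ via the Riemann map of Lemma \ref{lem-varphi} (cf.\ \cite{fll2021}), combined with $L^q$ Calder\'{o}n--Zygmund/Hardy--Littlewood--Sobolev theory. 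The decisive estimate is
$\|\rho\dot u\|_{L^q}\leq\|\sqrt\rho\|_{L^p}\|\sqrt\rho\dot u\|_{L^2}\leq CR_T^{1/2-1/p}A_2$
for $q=2p/(p+2)\in(1,2)$, obtained from H\"{o}lder with $1/p+1/2=1/q$ and the interpolation $\|\rho\|_{L^{p/2}}\leq CR_T^{1-2/p}$ between $\|\rho\|_{L^1}\leq C$ and $\|\rho\|_{L^\infty}\leq R_T$; the magnetic analogue yields $\|H\cdot\nabla H\|_{L^q}\leq C(A_1^{2/p}A_2^{1-2/p}+A_1)$.

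The main obstacle is to extract the sharp $R_T^{1/2-1/p+\varepsilon}$ exponent rather than the suboptimal $R_T^{(\beta-1)/p+1/2}$ that would arise from plain Gagliardo--Nirenberg interpolation of $\|F\|_{L^2}\leq R_T^{\beta/2}A_1$ against $\|\nabla F\|_{L^2}\leq CR_T^{1/2}(A_1+A_2)$. It is precisely the pulled-back Green's function that allows one to control the mean/boundary contribution of $F$ on the same footing as the interior forcing, trading the crude $R_T^{\beta/2}$ loss for the arbitrarily small $R_T^{\varepsilon}$ freedom built into the statement (with $\varepsilon<\beta/p$ and $C(p,\varepsilon)\to\infty$ as $\varepsilon\to 0$). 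Combining the $\omega$- and $F$-bounds, and assembling the lower-order $P$ and $|H|^2$ contributions, yields \eqref{tdu-lp}.
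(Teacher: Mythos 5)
Your reduction via Lemma \ref{lem-vn} and your treatment of $\|\omega\|_{L^p}$ (Dirichlet problem for $\omega$, energy estimate, then Gagliardo--Nirenberg) are fine and essentially agree with the paper. The genuine gap is in the decisive step, the $L^p$ bound of $F$: your Green's-function/HLS route, as sketched, cannot produce the right-hand side of \eqref{tdu-lp}. First, even granting the interior estimate, the Riesz-potential bound gives $\|\,|x-y|^{-1}\!*\!(\rho\dot u)\|_{L^p}\le C\|\rho\dot u\|_{L^{2p/(p+2)}}\le CR_T^{1/2-1/p}A_2$, i.e.\ the \emph{full first power} of $A_2$, whereas the lemma requires $A_1^{2/p}A_2^{1-2/p}$; since $A_2$ may be much larger than $A_1$, $R_T^{1/2-1/p}A_2$ is not dominated by $R_T^{1/2-1/p+\varepsilon}A_1^{2/p}A_2^{1-2/p}+R_T^{\varepsilon}A_1$, and a bound linear in $A_2$ would also break the later absorption arguments (e.g.\ \eqref{a1-i3}, \eqref{a1-i6}), which rely precisely on the exponent $1-2/p<1$. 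Second, the boundary contribution $\int_{\partial\Omega}\frac{\partial N}{\partial n}(x,y)F(y)\,dS_y$ is left to a one-line assertion: by \eqref{lem-no} it is an $x$-independent quantity of size $\lesssim\int_{\partial\Omega}|F|\,dS$, whose only available control is the trace/$H^1$ bound $\|F\|_{H^1}\le CR_T^{1/2}A_2+CA_1$ from \eqref{f-ow}--\eqref{f-om-h1} --- again a full power of $A_2$ with an $R_T^{1/2}$ (not $R_T^{1/2-1/p+\varepsilon}$) prefactor. Your claim that the pulled-back Green's function ``trades the $R_T^{\beta/2}$ loss for $R_T^\varepsilon$'' is not backed by any mechanism: nothing in the sketch produces an arbitrarily small $\varepsilon$.

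The paper avoids all of this by never estimating $\|F\|_{L^p}$ in isolation and by \emph{not} discarding the weight as you do with $2\mu+\lambda\ge 2\mu$. It writes $\|\mathrm{div}\,u\|_{L^p}\le\|\frac{F}{2\mu+\lambda}\|_{L^p}+\dots$ and interpolates the \emph{weighted} flux (see \eqref{tdu-lp1}): $\|\frac{F}{2\mu+\lambda}\|_{L^p}\le C\|\frac{F}{2\mu+\lambda}\|_{L^2}^{\frac2p-\bar\varepsilon}\|F\|_{L^{q(\bar\varepsilon)}}^{1-\frac2p+\bar\varepsilon}$, where $\|\frac{F}{2\mu+\lambda}\|_{L^2}\le CA_1$ carries \emph{no} $R_T$; the only $R_T^\beta$ enters through the tiny factor $\|F\|_{L^2}^{\bar\varepsilon}$ (via \eqref{f-l2}), which is exactly the $R_T^{\varepsilon}$ with $\varepsilon=\beta\bar\varepsilon/2$ in the statement, while the elliptic bound $\|F\|_{H^1}+\|\omega\|_{H^1}\le CR_T^{1/2}A_2+CA_1$ supplies the factor $R_T^{(\frac12)(1-\frac2p)}A_2^{1-\frac2p}$, yielding precisely $R_T^{\frac12-\frac1p+\varepsilon}A_1^{\frac2p}A_2^{1-\frac2p}$. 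The pointwise Green's-function representation is reserved for Lemma \ref{lem-f} and Proposition \ref{lem-rho-inf}, not for this lemma. To repair your proof, replace the Green's-function step for $F$ by this weighted interpolation.
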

\begin{proof}
First, we rewrite the momentum equations as
\begin{equation}
\rho\dot{u}=\nabla F+\mu\nabla^\bot\omega-H \cdot \nabla H.  \label{cmhd-2}
\end{equation}
We deduce from \eqref{cmhd-2} and the boundary condition \eqref{navier-b} that $F$ solves the Neumann problem
\begin{equation}\label{de-f}
\begin{cases}
  \Delta F=\mathop{\mathrm{div}}\nolimits(\rho\dot{u}-H \cdot \nabla H)& \mbox{ in } \Omega, \\
   \frac{\partial F}{\partial n}=(\rho\dot{u}-H \cdot \nabla H)\cdot n& \mbox{ on } \partial\Omega.  \end{cases}
\end{equation}
Similarly, $\omega$ solves the related Dirichlet problem:
\begin{equation}\label{de-om}
\begin{cases}
  \mu\Delta\omega=\nabla^\bot\cdot(\rho\dot{u}-H \cdot \nabla H)& \mbox{ in } \Omega, \\
   \omega=0& \mbox{ on } \partial\Omega.  \end{cases}
\end{equation}
Then, standard $L^p$ estimate of elliptic equations (see \cite{NS2004}) implies that for $k\geq 0$ and  $p\in(1, \infty)$,
\begin{equation}\label{f-ow}
\|\nabla F\|_{W^{k, p}}+\|\nabla w\|_{W^{k, p}}\leq C(p, k)(\|\rho\dot{u}\|_{W^{k, p}}+\|H \cdot \nabla H\|_{W^{k, p}}).
\end{equation}
In particular, by \eqref{g1} and \eqref{h-lp}, we have
\bnn
\|\nabla F\|_{L^2}+\|\nabla w\|_{L^2}\leq C(\|\rho\dot{u}\|_{L^2}+\|H \cdot \nabla H\|_{L^2})\leq CR_T^{1/2}A_2+CA_1,
\enn
which together with the Poincar\'{e} inequality, \eqref{h-lp} and \eqref{rhop} yields
\begin{equation} \label{f-om-h1} \ba
\|F\|_{H^1}+\|\omega\|_{H^1}&\leq C(\|\nabla F\|_{L^2}+\|\nabla w\|_{L^2})+\frac{C}{|\Omega|}\int{F}dx\\& \leq C R_T^{1/2}A_2+CA_1 . \ea
\end{equation}
Finally, according to \eqref{tdu1} and \eqref{rhop}, for $\bar{\epsilon}\in (0, 2/p)$, we get
\begin{equation}\label{tdu-lp1}
\begin{split}
\|\nabla u\|_{L^{p} }&\leq C(p)\big(\|\mathrm{div}u\|_{L^{p} }+\|\omega\|_{L^{p} }\big) \\
&\leq C(p) \left\|\frac{F}{2\mu+\lambda}\right\|_{L^{p} }+C(p) \|\omega\|_{L^{p} }+C\|H\|_{L^{2p}}^2+C\|P\|_{L^{p}} \\
&\le C(p)    \left\|\frac{F}{2\mu+\lambda}\right\|_{L^{2} }^{\frac{2}{p}-\bar{\varepsilon}}\|F\|_{L^{\frac{ 2(1+\bar{\varepsilon})p-4}{p\bar{\varepsilon}}} }^{-\frac{2}{p}+1+\bar{\varepsilon}} +C(p)\|\omega\|_{L^{p}} +C(p)
\\&\le  C(p, \bar{\varepsilon})A_1^{\frac{2}{p}-\bar{\varepsilon}}\|F\|_{L^{2} }^{\bar{\varepsilon}} \|F\|_{H^1 }^{1-\frac{2}{p}}+C(p)A_1^{\frac{2}{p}}\| \omega\|_{H^1}^{1-\frac{2}{p}}+C(p)\\
& \leq C(p, \bar{\varepsilon})R_T^{\frac{\beta\bar{\varepsilon}}{2} }A_1^{\frac{2}{p} }  ( \|F\|_{H^1}+\|\omega\|_{H^1})^{1-\frac{2}{p}} +C(p),
\end{split}
\end{equation}
which together with \eqref{f-om-h1} and set $\varepsilon \triangleq \beta\bar{\varepsilon}/2$ gives \eqref{tdu-lp} and finishes the proof of Lemma \ref{lem-u-lp}.
\end{proof}

Now we are in a position to derive the following estimate on the upper bound of $\ln A_1^2$ in terms of $R_T$ which will play an important role in obtaining the upper bound of the density.
\begin{lemma}\label{lem-a1a2}
For any $\kappa\in(0, 1)$,  there is a constant $C(\kappa)$ such that
\begin{equation}\label{a1a2}
\sup_{0\leq t\leq T}\ln A^2_1(t) +\int^T_0 \frac{A^2_2(t)}{A^2_1(t)}dt\leq C(\kappa)R_T^{1 +\kappa\beta }.
\end{equation}
\end{lemma}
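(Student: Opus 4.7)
The strategy is to derive an ordinary differential inequality of the form
\begin{equation*}
\frac{d}{dt}\Phi(t) + A_2^2(t) \le C(\kappa)\,R_T^{1+\kappa\beta}\,A_1^2(t),
\end{equation*}
in which $\Phi(t)$ is equivalent to $A_1^2(t)$ up to constants depending only on $\mu,\nu,\beta$. Dividing by $A_1^2$ and integrating on $[0,t]$ will deliver the conclusion; the initial contribution $\ln A_1^2(0)$ is finite thanks to Lemma \ref{lem-local} and \eqref{dt2}.

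To build the quadratic form $\Phi$ and expose the natural dissipation $A_2^2$, I would rewrite \eqref{CMHD1}$_2$ as $\rho\dot u=\nabla F+\mu\nabla^{\perp}\omega+H\cdot\nabla H$, multiply by $\dot u$, and integrate over $\Omega$. The vorticity piece reproduces $-\tfrac{\mu}{2}\tfrac{d}{dt}\|\omega\|_{L^2}^2$ plus an $\omega^2\mathrm{div} u$ correction, with no boundary residue because $\omega|_{\partial\Omega}=0$ by \eqref{navier-b}. For the $\nabla F$-term, integration by parts together with $\mathrm{div}\dot u=(\mathrm{div} u)^{\cdot}+\partial_iu_j\partial_ju_i$, the transport identities $P^{\cdot}=-\gamma P\,\mathrm{div} u$ and $\lambda^{\cdot}=-\beta\lambda\,\mathrm{div} u$, and the algebraic relation $(2\mu+\lambda)\mathrm{div} u=F+P+|H|^2/2$ regenerates $-\tfrac{1}{2}\tfrac{d}{dt}\int F^2/(2\mu+\lambda)\,dx$ up to lower-order terms cubic in $(\nabla u,H)$. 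The boundary residue $\int_{\partial\Omega}F\,\dot u\cdot n\,dS$ is controlled via \eqref{bdd2}, which turns it into $-\int_{\partial\Omega}F\,u\cdot\nabla n\cdot u\,dS$, estimated by a trace inequality and the elliptic bound \eqref{f-om-h1}. Simultaneously, multiplying \eqref{CMHD1}$_3$ by $-\nu\Delta H$ and by $H_t$ and using $H|_{\partial\Omega}=0$, hence $H_t|_{\partial\Omega}=0$, yields
\begin{equation*}
\frac{d}{dt}\|\nabla H\|_{L^2}^2+c_0\bigl(\|\Delta H\|_{L^2}^2+\|H_t\|_{L^2}^2\bigr)\le C\!\int\!\bigl(|H|^2+|u|^2\bigr)\bigl(|\nabla u|^2+|\nabla H|^2\bigr)dx.
\end{equation*}
Adding this to the momentum identity and letting $\Phi(t)$ be the resulting coercive quadratic combination produces the desired ODI, modulo the remaining nonlinear right-hand side.

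Bounding that right-hand side is the workhorse. Every term factors into some $\|\nabla u\|_{L^p}$, $\|H\|_{L^q}$, $\|\rho\|_{L^r}$, coupled with the dissipative quantities $\|\sqrt\rho\dot u\|_{L^2}$, $\|\Delta H\|_{L^2}$, $\|H_t\|_{L^2}$, $\|\nabla H\|_{L^2}$. Lemma \ref{lem-h-lp} provides $\|H\|_{L^q}\le C(q)$ independently of $R_T$, and Lemma \ref{lem-rhop} gives $\|\rho\|_{L^r}\le C(r)$. The decisive input is Lemma \ref{lem-u-lp}: squaring \eqref{tdu-lp} and applying Young's inequality with exponent $p/(p-2)$ splits
\begin{equation*}
\|\nabla u\|_{L^p}^2 \le \delta\,A_2^2+C(\delta,p,\varepsilon)\,R_T^{\frac{p-2}{2}+p\varepsilon}\,A_1^2+CR_T^{2\varepsilon}A_1^2.
\end{equation*}
Choosing $\varepsilon=\kappa\beta/(2p)$ and $p$ only slightly larger than $2$ keeps every $R_T$-exponent below $1+\kappa\beta$, and the $\delta A_2^2$ contribution is absorbed into the left-hand side. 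The main obstacle will be the uniform bookkeeping of the worst $R_T$-exponent across all the nonlinear terms at once: the boundary term from $F$ carries an implicit factor $R_T^{1/2}A_2$ through \eqref{f-om-h1}; the magnetic cross-terms $\int H\cdot\nabla u\cdot H_t$ and $\int H\otimes H:\nabla\dot u$ (produced after one integration by parts, using $\mathrm{div} H=0$ and $H|_{\partial\Omega}=0$) pair $\nabla u$ in a high $L^p$-norm against $H_t$ or $\Delta H$; and the cubic $\nabla u$-term needs an $L^3$-control. Each must be split so that the $A_2^2$-piece is absorbed on the left and the residual $R_T$-exponent stays $\le 1+\kappa\beta$, which forces $p$, $\varepsilon$ and the Young split to be tuned simultaneously. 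Once this is achieved, division by $A_1^2$ and integration in $t$ yield \eqref{a1a2}.
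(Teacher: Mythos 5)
Your overall architecture (energy identity for $F/\sqrt{2\mu+\lambda}$, $\omega$, $\nabla H$ producing the dissipation $A_2^2$, boundary terms handled via \eqref{bdd2} and \eqref{f-om-h1}, nonlinearities closed with Lemma \ref{lem-u-lp} and a tuned choice of $p$ and $\varepsilon$, then a logarithmic Gronwall step) is the same as the paper's. However, there is a genuine gap in the form of the differential inequality you claim and in the final closing step. The nonlinear terms on the right-hand side are cubic in the solution, so after absorbing $\delta A_2^2$ by Young's inequality they leave a \emph{quartic} residue in $A_1$, not a quadratic one. For instance, the boundary term is bounded by $\big|\int_{\partial\Omega} F\,(u\cdot\nabla n\cdot u)\,ds\big|\le C\|F\|_{H^1}\|\nabla u\|_{L^2}^2\le CR_T^{1/2}A_2A_1^2+CA_1^3$, which Young turns into $\delta A_2^2+CR_TA_1^4$; the same happens for $\int|F||\nabla u|^2dx$, for $\int\omega^2\,\mathrm{div}u\,dx$, and for the magnetic cross terms $\int H_t\cdot\nabla u\cdot H\,dx$, where $\|\nabla u\|_{L^p}$ is multiplied by further solution-dependent factors so the isolated splitting $\|\nabla u\|_{L^p}^2\le\delta A_2^2+CR_T^{\cdots}A_1^2$ cannot be applied as stated. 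The attainable inequality is $\frac{d}{dt}A_1^2+A_2^2\le C(\kappa)R_T^{1+\kappa\beta}A_1^4$, i.e.\ \eqref{a1a2-4}, and with this weaker bound your step ``divide by $A_1^2$ and integrate'' does not close by itself, because after division the right-hand side is $C(\kappa)R_T^{1+\kappa\beta}A_1^2$ and one still needs its time integral to be controlled \emph{independently of} $R_T$.

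The missing ingredient is precisely the elementary-energy observation that your proposal never invokes: since $A_1^2\le C\|\sqrt{2\mu+\lambda}\,\mathrm{div}u\|_{L^2}^2+C\|\omega\|_{L^2}^2+C\|\nabla H\|_{L^2}^2+C$ (using \eqref{rhop} and \eqref{h-lp} to bound $\|P\|_{L^2}$ and $\|H\|_{L^4}^4$), the basic energy estimate \eqref{basic1} gives $\int_0^TA_1^2\,dt\le C$ with $C$ independent of $R_T$ (this is \eqref{a1-l2}). Only with this integrability does the logarithmic Gronwall argument applied to $\frac{d}{dt}\ln A_1^2+A_2^2/A_1^2\le C(\kappa)R_T^{1+\kappa\beta}A_1^2$ yield \eqref{a1a2}. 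To repair the proposal, weaken your claimed ODI to the quartic form and add this energy-based bound on $\int_0^TA_1^2\,dt$ before integrating; as written, the claimed inequality with right-hand side $C(\kappa)R_T^{1+\kappa\beta}A_1^2$ is not derivable from the available estimates.
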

\begin{proof}
First, combining \eqref{tdu1}, \eqref{h-lp} and \eqref{rhop} gives
\begin{equation}\label{a1-0}\begin{split}
\|\nabla u\|_{L^2}+\|\nabla H\|_{L^2}&\leq C\|\mathop{\mathrm{div}}\nolimits u\|_{L^2}+C\|\omega\|_{L^2}+C\|\nabla H\|_{L^2}\leq CA_1,
\end{split}
\end{equation}
\begin{equation}\label{f-l2}
\|F\|^2_{L^2}\leq CR_T^{\frac{1}{2}}A_1.	
\end{equation}

Next, direct calculations show that
\begin{equation}
\nabla^\bot\cdot\dot{u} = \frac{D}{Dt}\omega+(\partial_1u\cdot\nabla)u_2-(\partial_2u\cdot\nabla)u_1=\frac{D}{Dt}\omega+\omega \mathrm{div}u,  \label{nbdu}
\end{equation}
and that
\begin{equation}\label{divdu}
\mathrm{div}\dot{u}=\frac{D}{Dt}\bigg(\frac{F}{2\mu+\lambda}\bigg)+\frac{D}{Dt}\bigg(\frac{P }{2\mu+\lambda}\bigg)+\frac{D}{Dt}\bigg(\frac{H^2}{2(2\mu+\lambda)}\bigg)+2\nabla u_1\cdot\nabla^\bot u_2 + (\mathrm{div}u)^2.
\end{equation}
Then multiplying \eqref{cmhd-2} by $2\dot{u}$ and integrating the resulting equality over $\Omega$, by \eqref{nbdu}, \eqref{divdu} and the boundary condition \eqref{navier-b},  we obtain that
\begin{align}\label{a1-1}
&\quad\frac{d}{dt}\Big(\|F/\sqrt{2\mu+\lambda(\rho)}(t)\|_{L^2}^2 +\|\omega(t)\|_{L^2}^2\Big)+2\|\sqrt{\rho}\dot{u}(t)\|_{L^2}^2 \nonumber \\
&=-\frac{d}{dt}\left(\int H \cdot \nabla u \cdot H dx\right)+\int_{\partial\Omega}F \dot{u}\cdot nds-\mu\int\omega^2\mathrm{div}udx-4\int F\nabla u_1\cdot \nabla^\bot u_2dx \nonumber\\
&-2\int F(\mathrm{div}u)^2dx-\int\frac{(\beta-1)\lambda-2\mu}{(2\mu+\lambda)^2}F^2\mathrm{div}udx-\beta\int\frac{\lambda (2P+|H|^2)  }{(2\mu+\lambda)^2}F\mathrm{div}udx \nonumber \\
&+\int\frac{2\gamma P+|H|^2}{ 2\mu+\lambda }F\mathrm{div}udx
 -\int\frac{F}{ 2\mu+\lambda }(\nu \Delta H \cdot H+H \cdot \nabla u \cdot H)dx\nonumber \\
&+\int(H_t \cdot \nabla u \cdot H+H \cdot \nabla u \cdot H_t)dx+\int H \cdot \nabla H \cdot ( u \cdot\nabla u)dx.
\end{align}
Besides, one easily deduces from \eqref{CMHD}$_3$ and \eqref{boundary} that
\begin{align}\label{a1-2}
\displaystyle\frac{d}{dt}\Big(\frac{\nu}{2}\|\nabla H\|_{L^2}^2\Big)_t+\nu^2 \|\Delta H\|_{L^2}^2+\|H_t\|^2_{L^2}\leq \int |H \cdot \nabla u-u \cdot \nabla H-H \mathrm{div} u|^2 dx .
\end{align}
Combine \eqref{a1-1} and \eqref{a1-2}, we have
\begin{align}\label{a1-3}
\quad\frac{d}{dt}A_1^2+A_2^2 \leq &-\frac{d}{dt}\left(\int H \cdot \nabla u \cdot H dx\right)+\int_{\partial\Omega}F u\cdot\nabla u\cdot nds-\mu\int\omega^2\mathrm{div}udx \nonumber\\
&+C\int |F||\nabla u|^2dx+C\int\frac{F^2+P|F|+|H|^2|F|}{2\mu+\lambda}|\mathrm{div}u|dx\nonumber \\
&+C\int\frac{|\Delta H||H|+|H|^2|\nabla u|}{ 2\mu+\lambda }|F|dx\nonumber \\
&+C\int(|H_t||\nabla u||H|+|\nabla H||H||\nabla u||u|)dx \triangleq -\frac{d}{dt}I_0+\sum_{i=1}^6I_i.
\end{align}
Now we estimate each $I_i, i=0,1,\cdot,6$ as follows.
First,  thanks to \eqref{bdd2},  we deal with $I_1$ via:
\begin{equation}\label{a1-i1}\begin{split}
I_1&\leq\left|\int_{\partial\Omega} F(u\cdot \nabla n\cdot u) ds\right|
\leq C\|F\|_{H^1}\|\nabla u\|_{L^2}^2\\
&\leq CR_T^{1/2}A_1^2A_2+CA_1^3\leq \delta A_2^2+CR_TA_1^4.	
\end{split}
\end{equation}
Next, combining \eqref{a1-0}, \eqref{f-om-h1}, and  H\"{o}lder's inequality leads to
\begin{equation}\label{a1-i2}\begin{split}
I_2&\leq C\|\omega\|_{L^4}^2\|\mathrm{div}u\|_{L^2}
\leq C \|\omega\|_{L^2}\|\nabla\omega\|_{L^2}A_1\\
&\leq CR_T^{1/2}A_1^2A_2+CA_1^3\leq \delta A_2^2+CR_TA_1^4. 	
\end{split}
\end{equation}
In the following, for $\kappa\in (0,1),$ letting  $p\ge 2+2/\kappa ,$ we use \eqref{tdu-lp1}, H\"older's and Sobolev's inequalities to get
\begin{equation}\label{a1-i3}
\begin{split}I_3 &
\leq\|F\|_{L^{p}}\|\nabla u\|_{L^{2}}^{\frac{2(p-3)}{p-2}}\|\nabla u\|_{L^{p}}^{\frac{2}{p-2}} \\
&\leq C(p,\bar{\epsilon})\|F\|_{L^{p}}A_1^{\frac{2(p-3)}{p-2}}\left(R_T^{\frac{\bar{\epsilon}\beta}{2} }A_1^{\frac{2}{p} }  ( \|F\|_{H^1}+\|\omega\|_{H^1})^{1-\frac{2}{p}}+1\right)^{\frac{2}{p-2}} 
\\
&\leq C(\kappa) R_T^{\frac{\kappa\beta}{ 2}}\left(\|F||_{H^1} +\|\omega||_{H^1} \right) A_1^2  +C(\kappa)  A_1^2\\
&\leq C(\kappa) R_T^{\frac{1+\kappa\beta}{ 2}}A_1^2A_2+C(\kappa) R_T^{\frac{\kappa\beta}{ 2}}A_1^3+C(\kappa) A_1^2\leq \delta A_2^2+CR_T^{1+\kappa\beta}A_1^4.
\end{split}
\end{equation}
where in the third line, we have used  $ \|F\|_{L^{p}}\le  C(p) \|F\|_{L^{2}}^{\frac{2}{p }}\|F\|_{H^1}^{1-\frac{2}{p }}$  and \eqref{f-l2}.

Next, in terms of  H\"{o}lder's inequality, \eqref{g1} and \eqref{a1-0}, it follows that for $0 <\kappa< 1$,
\begin{equation}\label{f-l2-1}
\begin{split}
\|\frac{F^2}{2\mu+\lambda}\|_{L^2}&=\|\frac{F}{\sqrt{2\mu+\lambda}}\|_{L^4}^2\leq C \big\|\frac{F}{\sqrt{2\mu+\lambda}}\big\|_{L^2}^{1-\kappa} \|F\|^{1+\kappa}_{L^{2(1+\kappa)/\kappa}}\\
&\leq C (\kappa) A_1^{1-\kappa}  \|F \|_{L^2}^{ \kappa} \|F\|_{H^1}
\leq C(\kappa) R_T^{\frac{\kappa\beta}{ 2}}A_1^2 \|F\|_{H^1},
\end{split}
\end{equation}
which together with \eqref{h-lp}, \eqref{rhop} and \eqref{f-om-h1} yields
\begin{equation}\label{a1-i4}
\begin{split}
I_4 &\leq C A_1 \left\|\frac{F^2}{2\mu+\lambda}\right\|_{L^2}+C A_1\|F\|_{L^{2+4\gamma/\beta}}\|P\|_{L^{2+\beta/\gamma}}+CA_1 \|F\|_{L^4}\|H\|_{L^8}^2 \\
&\leq C (\kappa) A_1^{2-\kappa}  \|F \|_{L^2}^{ \kappa} \|F\|_{H^1} +C(\kappa) A_1  \|F\|_{H^1} \\
&
\leq C(\kappa) R_T^{\frac{1+\kappa\beta}{ 2}}A_1^2A_2+C(\kappa) R_T^{\frac{\kappa\beta}{ 2}}A_1^3\leq \delta A_2^2+CR_T^{1+\kappa\beta}A_1^4.
\end{split}
\end{equation}
Similarly,
\begin{equation}\label{a1-i5}\begin{split}
I_5&\leq C\|\frac{F}{2\mu+\lambda(\rho)}\|_{L^4}(\|H\|{_{L^8}^2}\|\nabla u\|_{L^2}+\|H\|_{L^4}\|\Delta H\|_{L^2})\\
&\leq C(\kappa) R_T^{\frac{\kappa\beta}{4}}A_1 \|F\|_{H^1}^{\frac{1}{2}}(A_1+A_2)\leq \delta A_2^2+CR_T^{1+\kappa\beta}A_1^4.	
\end{split}
\end{equation}
By \eqref{h-lp}, \eqref{a1-0} and \eqref{tdu-lp} with $\varepsilon \leq \kappa \beta/4$, we obtain
\begin{equation}\label{a1-i6}\begin{split}
I_6&\leq C (\|H_t\|_{L^2}\|\nabla u\|_{L^4}\|H\|_{L^4}+\|\nabla H\|_{L^4}\|H\|_{L^4}\|\nabla u\|_{L^4}\|u\|_{L^4})\\
&\leq C(\|H_t\|_{L^2}+\|\nabla H\|_{L^2}^{1/2}\|\Delta H\|_{L^2}^{1/2}\|\nabla u\|_{L^2}+\|\nabla H\|_{L^2}\|\nabla u\|_{L^2})\|\nabla u\|_{L^4}\\
&\leq C(p,\epsilon)(A_2+A_1^{3/2}A_2^{1/2}+A_1^2)(R_T^{\frac{1}{4}+\epsilon}A_1^{\frac{1}{2}}A_2^{\frac{1}{2}}+R_T^{\epsilon}A_1)\\
&\leq \delta A_2^2+CR_T^{1+4\epsilon}A_1^4.	
\end{split}
\end{equation}
Moreover, one has
\begin{equation}\label{a1-i0}
I_0\leq\int|H|^2|\nabla u|dx\leq\|\nabla u\|_{L^2}\|H\|{_{L^4}^2}
\leq \delta\|\nabla u\|{_{L^2}^2}+C\|\nabla H\|{_{L^2}^2}.
\end{equation}
Putting all the estimates \eqref{a1-i1}-\eqref{a1-i0} into \eqref{a1-3}, choosing $\delta$ suitably small,  yields that
\begin{equation}\label{a1a2-4}
\displaystyle  \frac{d}{dt}A^2_1(t)+A^2_2(t)\leq C(\kappa)R_T^{1+ \kappa\beta} A_1^4.
\end{equation}
Besides, combining \eqref{tdu1}, \eqref{h-lp} and \eqref{rhop} gives
\begin{equation}\label{a1}\begin{split}
A_1^2(t) \leq& C\|\sqrt{2\mu+\lambda}(\mathop{\mathrm{div}}\nolimits u)\|^2_{L^2}+C\|P\|^2_{L^2}+C\|H\|^4_{L^4}+C\|\omega\|^2_{L^2}+C\|\nabla H\|^2_{L^2}  \\
\leq& C\|\sqrt{2\mu+\lambda}(\mathop{\mathrm{div}}\nolimits u)\|^2_{L^2}+C\|\omega\|^2_{L^2}+C\|\nabla H\|^2_{L^2}+C, 
\end{split}
\end{equation}
which together with \eqref{basic1} implies
\begin{equation}\label{a1-l2}
\int_0^T A_1^2 dt \le C.
\end{equation}
Combining \eqref{a1a2-4} with \eqref{a1-l2}, applying Gronwall's inequality, yields \eqref{a1a2} and finishes the proof of Lemma \ref{lem-a1a2}.
\end{proof}

In the following, adapting the ideas in \cite{fll2021}, we derive the pointwise representation of $F$ by applying Riemann mapping theorem and  the pull-back Green's function method.
Precisely, let  $\varphi=(\varphi_1, \varphi_2):\bar{\Omega}\rightarrow\bar{\mathbb{D}}$ be the conformal mapping  which  satisfies  Lemma \ref{lem-varphi}.
We then define the pull back Green's function $N$ of $\Omega$:
\begin{equation}\ba\label{n-fun}
 N (x, y)\triangleq -\frac{1}{2\pi}\Big(\ln|\varphi(x)-\varphi(y)|+ \ln\big||\varphi(x)|\varphi(y)-\frac{\varphi(x)}{|\varphi(x)|}\big| \Big).\ea
\end{equation}
Since the outer normal derivative of $ N $ on the boundary $\partial\Omega$ is no longer constant, $N$ is   not the ``real" Green's function of $\Omega$ in the classical sense  but still sufficient for our further calculations. Moreover, since the conformal mapping preserves angles, one immediately has the following conclusion (see \cite[Lemma 3.6]{fll2021}):
\begin{equation}\label{lem-no}
\frac{\partial{N}}{\partial n}(x, y_0)=-\frac{1}{2\pi}|\nabla\varphi_1(y_0)|, 	
 \end{equation}
where $n$ is the unit outer normal at $y_0\in\partial\Omega$.

Now, we turn to use the pull back Green's function $ N $ defined as in \eqref{n-fun}  to give a pointwise representation of $F$ in $\Omega$ via Green's identity as follows:
\begin{lemma}\label{lem-f}
Let $F\in C^1(\bar\Omega)\cap C^2(\Omega)$ solve the Neumann problem
\begin{equation}\label{de-ff}
\begin{cases}
  \Delta F=\mathop{\mathrm{div}}\nolimits(\rho\dot{u}-H \cdot \nabla H)& \mbox{ in } \Omega, \\
   \frac{\partial F}{\partial n}=(\rho\dot{u}-H \cdot \nabla H)\cdot n& \mbox{ on } \partial\Omega,  \end{cases}
\end{equation}
and $ N $ is the pull back Green's function defined as in \eqref{n-fun}. Then for $x\in\Omega$, there holds
\begin{equation} \label{f}
F(x)
=  -\int \nabla_y N (x, y)\cdot(\rho\dot{u}+H\cdot\nabla H)(y)dy+\int_{\partial\Omega}\frac{\partial  N }{\partial n}(x, y)F(y)dS_y.
\end{equation}
Furthermore, for the boundary condition $u \cdot n=0$ on $\partial \Omega$, it follows that
\begin{equation}\label{f-123}
F(x)=-\frac{D}{Dt}F_0+F_1+F_2+F_3,
\end{equation}
with
\begin{align}
F_0=&-\int\partial_{y_j} N (x, y)\rho u_j(y)dy,\label{f0}\\
F_1=&\int \left[\partial_{x_i}\partial_{y_j} N (x, y)u_i(x)+\partial_{y_i}\partial_{y_j} N (x, y)u_i(y)\right]\rho u_j(y)dy,\label{f1}\\
F_2=&-\int \nabla_y N (x, y)\cdot(H\cdot\nabla H)(y)dy,\label{f2}\\
F_3=&\int_{\partial\Omega}\frac{\partial  N }{\partial n}(x, y)F(y)dS_y.\label{f3}
\end{align}
\end{lemma}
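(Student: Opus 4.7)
The argument splits into two independent tasks: establishing the integral representation \eqref{f} via Green's identity, and then pulling a full material derivative out of the $\rho\dot u$ piece using the slip condition together with the continuity equation.

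For the first task, the plan is to apply Green's second identity to $(F(y), N(x,\cdot))$ on $\Omega\setminus B_\epsilon(x)$ and let $\epsilon\downarrow 0$. The essential underlying fact is that $\Delta_y N(x,y)=\delta_x(y)$ distributionally in $\Omega$. In the disc this is immediate from \eqref{gre1}: the first logarithm is the 2D Newton kernel with singularity at $y=x$, while the second logarithm has its singularity $x/|x|^2$ outside $\overline{\mathbb{D}}$ and is therefore harmonic in $y\in\mathbb{D}$. For general $\Omega$, the Cauchy--Riemann equations \eqref{c-r} give the conformal identity $\Delta_y[g(\varphi(y))]=|\nabla\varphi_1(y)|^2(\Delta g)(\varphi(y))$ for smooth $g$ on $\mathbb{D}$, and this pointwise factor $|\nabla\varphi_1|^2$ exactly cancels the Jacobian arising when $\delta_{\varphi(x)}$ is pulled back by $\varphi$, giving $\Delta_y N(x,y)=\delta_x(y)$ on $\Omega$. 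Granting this, I would plug the interior equation from \eqref{de-ff} into $\int N(x,y)\Delta F(y)\,dy$ and integrate by parts to produce a bulk piece $\int\nabla_y N\cdot(\rho\dot u-H\cdot\nabla H)\,dy$ together with a boundary piece $\int_{\partial\Omega}N(\rho\dot u-H\cdot\nabla H)\cdot n\,dS_y$; the Neumann condition in \eqref{de-ff} cancels the latter against the $\int_{\partial\Omega}N\partial_nF\,dS_y$ coming from Green's formula, while the inner sphere integral of $F\partial_nN$ over $\partial B_\epsilon(x)$ concentrates to $F(x)$ as $\epsilon\downarrow 0$, yielding \eqref{f}.

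For the decomposition \eqref{f-123}, the continuity equation $\rho_t+\mathrm{div}(\rho u)=0$ lets one rewrite $\rho\dot u_j=(\rho u_j)_t+\partial_{y_i}(\rho u_iu_j)$. Substituting this into the volume integral $-\int\partial_{y_j}N(x,y)\rho\dot u_j\,dy$ and integrating the $\partial_{y_i}$ by parts in $y$, the boundary contribution carries the factor $u_in_i=u\cdot n$ and vanishes by \eqref{navier-b}, leaving the interior term involving $\partial_{y_i}\partial_{y_j}N$. Separately, the piece $\int\partial_{y_j}N(x,y)(\rho u_j)_t(y,t)\,dy$ equals (up to the sign built into \eqref{f0}) the pure time derivative $\partial_tF_0(x,t)$; converting $\partial_t$ into the material derivative via $\partial_t=\tfrac{D}{Dt}-u(x,t)\cdot\nabla_x$ and differentiating \eqref{f0} under the integral in $x$ produces the cross term $u_i(x)\int\partial_{x_i}\partial_{y_j}N\,\rho u_j\,dy$. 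Adding this cross term to the bulk term extracted above assembles precisely the symmetric expression $F_1$ of \eqref{f1}; the untouched $H\cdot\nabla H$ volume integral is $F_2$, and the leftover boundary integral in \eqref{f} is $F_3$, giving \eqref{f-123}.

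The main technical obstacle is analytic rather than structural: differentiating $F_0$ in $x$ brings in the kernel $\partial_{x_i}\partial_{y_j}N$, which behaves like $|x-y|^{-2}$ near the diagonal and is not absolutely integrable, so the identity must be justified either via a principal-value cutoff, via approximation of $N$ by a smoothed kernel followed by passage to the limit, or via Calder\'on--Zygmund theory exploiting the extra regularity of $\rho u$ granted by Lemma \ref{lem-local}; the analogous care is needed for interchanging $\partial_t$ with the integral defining $F_0$. Once these justifications are set up, the remainder is a direct bookkeeping of integration-by-parts, with the slip boundary condition $u\cdot n=0$ supplying the crucial cancellation that turns the convective boundary flux into the interior symmetric kernel appearing in $F_1$.
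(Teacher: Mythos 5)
Your proposal is correct and follows essentially the same route as the paper's proof: Green's identity applied to $F$ and the pull-back Green's function $N$ (the reflected logarithm being harmonic in $\Omega$, the singular part handled by excising a small neighbourhood of $x$ — the paper uses the pullback ball $\varphi^{-1}(B_r(\varphi(x)))$ and computes the concentration explicitly via \eqref{c-r} and \eqref{lem-no}, while you phrase the same fact as a distributional identity for $\Delta_y N$ via conformal invariance and the Jacobian cancellation), followed by the continuity equation and the slip condition $u\cdot n=0$ to integrate by parts, convert $\partial_t$ into $\frac{D}{Dt}$, and assemble the symmetric kernel in $F_1$. The only points worth noting are cosmetic: with the normalization \eqref{n-fun} one has $\Delta_y N(x,\cdot)=-\delta_x$ rather than $+\delta_x$, and the convergence/commutator issue you flag for $F_1$ is not treated inside the lemma in the paper either, being deferred to the pointwise estimate quoted from \cite{fll2021} in the proof of Proposition \ref{lem-rho-inf}.
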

\begin{proof}
The proof of \eqref{de-ff} is essentially the same as that in \cite[Lemma 3.7]{fll2021} and we just sketch it for completeness.
Denote $ N (x, y)=N_1(x, y)+N_2(x, y)$,  where
\begin{equation*}
N_1(x, y)=-\frac{1}{2\pi}\ln |\varphi(x)-\varphi(y)|,  \ N_2(x, y)=-\frac{1}{2\pi}\ln\big||\varphi(x)|\varphi(y)-\frac{\varphi(x)}{|\varphi(x)|}\big|.
\end{equation*}
By \eqref{harm}, $N_2(x, \cdot)$ remains harmonic in $\Omega$. Applying Green's second identity to $F$ and $N_2(x, \cdot)$ in $\Omega$, we have
\begin{equation}\label{fn2}
-\int_{}N_2\Delta Fdy=\int_{\partial\Omega}\left(\frac{\partial N_2}{\partial n}F-N_2\frac{\partial F}{\partial n}\right)dS_y.
\end{equation}
For $N_1(x, \cdot)$ in $\Omega\backslash \widetilde{B}_r(x)$, where $\widetilde{B}_r(x)=\varphi^{-1}(B_r(\varphi(x)))$ is just the inverse image under $\varphi$ of the ball centered at $\varphi(x)$ of small enough radius $r$, we deduce that
\begin{equation}\nonumber
\ba&
\int_{\Omega\backslash \widetilde{B}_r(x)}(\Delta N_1F-N_1\Delta F)dy\\&=\int_{\partial\Omega}\left(\frac{\partial N_1}{\partial n}F-N_1\frac{\partial F}{\partial n}\right)dS_y
-\int_{\partial\widetilde{B}_r(x)}\left(\frac{\partial N_1}{\partial n}F-N_1\frac{\partial F}{\partial n}\right)dS_y.
\ea
\end{equation} Letting $r\rightarrow 0$ and using the fact that
  $\Delta N_1=0$ in $\Omega\backslash \widetilde{B}_r(x), $  we have
\begin{equation} \label{fn1}\ba
-\int_{\Omega} N_1\Delta Fdy=&\int_{\partial\Omega}\left(\frac{\partial N_1}{\partial n}F-N_1\frac{\partial F}{\partial n}\right)dS_y\\&-\lim_{r\rightarrow0}\int_{\partial\widetilde{B}_r(x)}\left(\frac{\partial N_1}{\partial n}F-N_1\frac{\partial F}{\partial n}\right)dS_y.
\ea\end{equation}
For $r$ small enough,
\begin{equation}\label{fn1-2}
\begin{split}
&\quad\bigg|\int_{\partial\widetilde{B}_r(x)}N_1\frac{\partial F}{\partial n}dS_y\bigg|
\leq\int_{\partial\widetilde{B}_r(x)}|N_1(x, y)|dS_y\cdot \sup_{\partial\widetilde{B}_r(x)}|\nabla F|\\
&\leq\int_{\partial B_r(\varphi(x))}\left|\mathrm{ln} {|\varphi(x)-\varphi(y)|}\right| \frac{1}{|\nabla\varphi_1(y)|}dS_{\varphi(y)} \cdot\sup_{\partial\widetilde{B}_r(x)}|\nabla F|\\
&\leq Cr|\mathrm{ln}\,r|\cdot\sup_{\partial\widetilde{B}_r(x)} |\nabla\varphi_1|^{-1}\sup_{\partial\widetilde{B}_r(x)}|\nabla F| \rightarrow 0\ as\ r\rightarrow 0.	
\end{split}
\end{equation}
and
\begin{equation}\label{fn1-3}
\begin{split}
&\quad \int_{\partial\widetilde{B}_r(x)}F\frac{\partial N_1}{\partial n}dS_y
=\int_{\partial\widetilde{B}_r(x)}\vec{n}_y\cdot\nabla_yN_1(x, y) F(y)dS_y\\&
= -\frac{1}{2\pi  }\int_{\partial\widetilde{B}_r(x)} \frac{\nabla\varphi(y)[\varphi(y)-\varphi(x)]}{|\nabla\varphi_1(y)| |\varphi(x)-\varphi(y)|} \cdot \frac{\nabla\varphi(y)[\varphi(x)-\varphi(y)]}{|\varphi(x)-\varphi(y)|^2} F(y)dS_y\\
&=-\frac{1}{2\pi r}\int_{\partial\widetilde{B}_r(x)}|\nabla\varphi_1(y)|F(y)dS_y\\
&=-\frac{1}{2\pi r}\int_{\partial B_r(\varphi(x))}F(\varphi^{-1}(\tilde{y}))dS_{\tilde{y}}\rightarrow F(x)\ as \ r\rightarrow 0,	
\end{split}
\end{equation}
where we use \eqref{c-r} and \eqref{lem-no}.  Adding \eqref{fn2} and \eqref{fn1} together, by using \eqref{de-ff}, \eqref{fn1-2} and \eqref{fn1-3}, we have
\begin{equation}\nonumber
\begin{split}
F(x)=&\int  N (x, y)\mathrm{div}(\rho\dot{u}+H\cdot\nabla H)(y)dy-\int_{\partial\Omega} N (x, y)(\rho\dot{u}+H\cdot\nabla H)\cdot\vec{n}(y)dS_y\\
&+\int_{\partial\Omega}\frac{\partial  N }{\partial n}(x, y)F(y)dS_y\\\label{01}
=&-\int \nabla_y N (x, y)\cdot(\rho\dot{u}+H\cdot\nabla H)(y)dy +\int_{\partial\Omega}\frac{\partial  N }{\partial n}(x, y)F(y)dS_y,
\end{split}
\end{equation} which gives \eqref{f}.
By \eqref{CMHD}$_1$ and the boundary condition $u\cdot {n}|_{\partial \Omega}=0$, it follows that
\begin{equation}\label{ff-n1}
\begin{split}
&-\int \nabla_y N (x, y)\cdot\rho\dot{u}(y)dy\\
=&-\int \nabla_y N (x, y)\cdot\bigg(\frac{\partial(\rho u)}{\partial t}+\mathrm{div}(\rho u\otimes u)\bigg)dy\\
=&-(\frac{\partial}{\partial t}+u\cdot\nabla)\int \partial_{y_j} N (x, y)\rho u_j(y)dy\\
&+\int \left[\partial_{x_i}\partial_{y_j} N (x, y)u_i(x)+\partial_{y_i}\partial_{y_j} N (x, y)u_i(y)\right]\rho u_j(y)dy.	
\end{split}
\end{equation}
Then we rewrite \eqref{f} as follows
\begin{equation} \label{f-1}
\begin{split}
F(x)=&-\frac{D}{Dt}\int\partial_{y_j} N (x, y)\rho u_j(y)dy\\&+\int \left[\partial_{x_i}\partial_{y_j} N (x, y)u_i(x)+\partial_{y_i}\partial_{y_j} N (x, y)u_i(y)\right]\rho u_j(y)dy\\
&-\int \nabla_y N (x, y)\cdot(H\cdot\nabla H)(y)dy+\int_{\partial\Omega}\frac{\partial  N }{\partial n}(x, y)F(y)dS_y.	
\end{split}
\end{equation}
and finishes the proof of Lemma \ref{lem-f}.
\end{proof}

To this end, we are in a position to obtain the upper bound of the density which plays an essential role in the whole procedure.
\begin{proposition} \label{lem-rho-inf} Assume that \eqref{b-g} holds. Then there exists some positive constant  $C$ such that
\begin{equation}\label{rho-inf1}
\sup_{0\leq t\leq T}\|\rho\|_{L^{\infty}}\leq C .
\end{equation}
\end{proposition}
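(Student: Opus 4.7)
My plan follows the Lagrangian strategy of \cite{hl2016,fll2021}, bounding $\rho$ pointwise along particle trajectories via a well-chosen function of $\rho$. Set $\theta(\rho):=2\mu\ln\rho+\rho^\beta/\beta$, so that $\rho\,\theta'(\rho)=2\mu+\lambda(\rho)$. Combining the mass equation $\dot\rho+\rho\,\mathrm{div}\,u=0$ with the identity $(2\mu+\lambda)\mathrm{div}\,u=F+P+\tfrac12|H|^2$ yields
\begin{equation*}
\dot\theta(\rho)=-F-P-\tfrac12|H|^2,
\end{equation*}
and inserting the pointwise representation $F=-\frac{D}{Dt}F_0+F_1+F_2+F_3$ from Lemma \ref{lem-f} gives
\begin{equation*}
\frac{D}{Dt}\bigl(\theta(\rho)-F_0\bigr)=-F_1-F_2-F_3-P-\tfrac12|H|^2.
\end{equation*}
Integrating along any particle trajectory $s\mapsto X(s)$ with $X(t)=x$, and discarding the non-negative terms $P,|H|^2/2$, one obtains
\begin{equation*}
\theta(\rho(x,t))\le\theta(\rho_0(X(0)))+2\sup_{[0,T]\times\Omega}|F_0|+\int_0^T\sum_{i=1}^3\|F_i(\cdot,s)\|_{L^\infty_x}\,ds.
\end{equation*}
Since $\theta(\rho)\ge\rho^\beta/\beta$ whenever $\rho\ge 1$, obtaining a uniform bound on $R_T$ reduces to controlling the right-hand side by a quantity of order $R_T^\delta$ with $\delta<\beta$.

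Next I would estimate each $F_i$ term by term. For $F_0$, since $\nabla_yN$ has a $|x-y|^{-1}$-type singularity, Hölder yields $\|F_0\|_{L^\infty}\le C\|\rho u\|_{L^q}$ for any $q>2$; interpolating $\rho|u|^{2+\alpha}\in L^\infty_tL^1_x$ (Lemma \ref{lem-rho-u}) with $\rho\in L^\infty_tL^r_x$ for every $r<\infty$ (Lemma \ref{lem-rhop}) then gives a bound with an arbitrarily small power of $R_T$. For the boundary term $F_3$, the smoothness of $\partial N/\partial n$ (see \eqref{lem-no}) and the trace inequality reduce matters to $\|F\|_{H^1}$, and \eqref{f-om-h1} combined with Lemma \ref{lem-a1a2} and Cauchy–Schwarz in time yields a bound $\int_0^T\|F_3\|_{L^\infty}\,ds\le CR_T^{1+\kappa\beta/2}$. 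The magnetic contribution $F_2$ has a $|x-y|^{-1}$-kernel acting on $H\cdot\nabla H$ and is handled by combining Lemma \ref{lem-h-lp} for $\|H\|_{L^p}$ with the $L^2_t\|\Delta H\|_{L^2}$-bound contained in $A_2$, noting that $\|\nabla^2H\|_{L^2}\sim\|\Delta H\|_{L^2}$ by the Dirichlet condition on $H$.

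The most technical term is $F_1$. Its kernel $\partial_x\partial_yN$ is only Calderón–Zygmund, but the particular combination $\partial_{x_i}\partial_{y_j}N\,u_i(x)+\partial_{y_i}\partial_{y_j}N\,u_i(y)$ has a commutator structure: integration by parts in the second summand produces a boundary integral that vanishes thanks to the slip condition $u\cdot n=0$, while the bulk piece features the near-Lipschitz difference $u(y)-u(x)$. This reduces $F_1$ to a quantity controlled by $\|\nabla u\|_{L^p}$ for some $p$ slightly larger than $2$, so Lemma \ref{lem-u-lp} delivers $\int_0^T\|F_1\|_{L^\infty}ds\le CR_T^{\delta'}$ for some $\delta'<\beta$. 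Collecting all four estimates produces $R_T^\beta\le C+CR_T^\delta$ with $\delta<\beta$, and thus $R_T\le C$. The principal obstacle is to ensure every exponent on $R_T$ arising above stays strictly smaller than $\beta$, which interacts delicately with the $p^{2/(\beta-1)}$-growth in Lemma \ref{lem-rhop} and the $R_T^{1+\kappa\beta}$-growth in Lemma \ref{lem-a1a2}; this is exactly where $\beta>4/3$ is used. A secondary obstacle, absent from the Navier–Stokes analysis of \cite{fll2021}, is the magnetic contribution $H\cdot\nabla H$ inside $F$, which forces us to rely on the new $L^\infty_tL^p_x$-bound of Lemma \ref{lem-h-lp} and the $L^2_tH^2_x$-regularity of $H$ extracted from $A_2$.
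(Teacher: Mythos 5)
Your overall scheme is the same as the paper's: the same $\theta(\rho)$, the same Lagrangian integration of $\frac{D}{Dt}\theta(\rho)\le -F$, the same decomposition $F=-\frac{D}{Dt}F_0+F_1+F_2+F_3$ from Lemma \ref{lem-f}, and the same ingredients (Lemmas \ref{lem-h-lp}, \ref{lem-rhop}, \ref{lem-rho-u}, \ref{lem-u-lp}, \ref{lem-a1a2}) for the individual terms; your treatment of $F_2$ and $F_3$ matches the paper (the paper's $F_3$ bound is exactly your $R_T^{1+\kappa\beta/2}$, written there as $R_T^{1+\varepsilon}$). However, there is a genuine gap in your estimate of $F_0$, which is one of the two terms that actually decide the admissible range of $\beta$. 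You claim $\|F_0\|_{L^\infty}\le C\|\rho u\|_{L^q}$ for \emph{any fixed} $q>2$ and then assert that interpolating $\rho|u|^{2+\alpha}\in L^\infty_tL^1_x$ with $\rho\in L^\infty_tL^r_x$ gives ``an arbitrarily small power of $R_T$.'' This does not work: the only integrability of $u$ available pointwise in time beyond $\sqrt{\rho}u\in L^2$ is through Lemma \ref{lem-rho-u}, whose exponent $2+\alpha$ with $\alpha=\alpha_0R_T^{-\beta/2}$ in \eqref{al1} shrinks as $R_T$ grows (you cannot use $\|u\|_{H^1}\le CA_1$ here, because $F_0$ sits inside the material derivative and must be bounded pointwise in time, while \eqref{a1a2} only controls $A_1$ by $\exp(CR_T^{1+\kappa\beta})$ at a fixed time). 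Hence for large $R_T$ one is forced to take $q\in(2,2+\alpha)$, the kernel constant for $|x-y|^{-1}$ then blows up like $\alpha^{-1/2}\sim R_T^{\beta/4}$, and reducing $\rho^{2+\alpha}$ to $\rho|u|^{2+\alpha}$ costs an extra $R_T^{\frac{1+\alpha}{2+\alpha}}$; this is precisely the paper's computation \eqref{f0-1}, which yields $|F_0|\le CR_T^{(2+\beta)/3}$ --- a power that is strictly less than $\beta$ only because $\beta>1$, and is in no sense arbitrarily small. Trying instead to absorb the density into Lemma \ref{lem-rhop} does not rescue your claim either, since the needed Lebesgue exponent is of size $R_T^{\beta/2}$ and the constant $p^{2/(\beta-1)}$ in \eqref{rhop} then reintroduces a power $\sim\beta/(2(\beta-1))$ of $R_T$, which for $\beta$ near $4/3$ even exceeds $\beta$.

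A second, smaller issue is that your treatment of $F_1$ stops exactly where the restriction $\beta>4/3$ is generated. Reducing $F_1$ to $\|\nabla u\|_{L^p}$ times a commutator difference is only half the work: the kernel $|x-y|^{-(1+2/p)}$ still acts on $\rho|u|$, and bounding that convolution requires the near/far splitting with the parameter $\delta$, the far part via Lemma \ref{lem-rho-u} (again with the $R_T$-dependent $\alpha$), and the choice $\varepsilon_0\sim\alpha$ with $\varepsilon_0^{-1/2}\le CR_T^{\beta/4}$; this is what produces the factor $R_T^{1+\beta/4}$ and, after time integration via \eqref{a1a2} and \eqref{a1-l2}, the bound $\int_0^T\|F_1\|_{L^\infty}dt\le CR_T^{1+\beta/4+3\varepsilon}$ in \eqref{f1-3}. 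The inequality $1+\beta/4<\beta$ is exactly the hypothesis $\beta>4/3$; declaring ``$\delta'<\beta$'' and naming this an obstacle does not establish it. So the proposal is the right strategy, but the two exponent-critical estimates ($F_0$ and $F_1$) are respectively incorrect as stated and left unproved.
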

\begin{proof}
Denote $\theta(\rho)\triangleq 2\mu \ln\rho+\beta^{-1}\rho^\beta,$  we have by \eqref{CMHD}$_1$ and \eqref{f-123},
\begin{equation}\label{dt-rho}\begin{split}
  \frac{D}{Dt}\theta(\rho) =&-(2\mu+\lambda)\mathrm{div} u = -F-P-|H|^2/2\leq -F\\
  \le& \frac{D}{Dt}F_0+\sum_{i=1}^3|F_i|.
\end{split}\end{equation}

First, we state the crucial point-wise estimate on $F_1$ due to \cite[Proposition 3.2]{fll2021}. Precisely,  for any $x\in \Omega$ with $\varphi(x)\not=0$, there exists a generic positive constant $C(\Omega)$ such that
\begin{equation}\label{f1-1}\begin{split}
|F_1(x)|\leq& C(\Omega) \int\frac{\rho|u|^2(y)}{|x-y|}dy + C(\Omega)\int\frac{|u(x)-u(y)|}{|x-y|^2}\rho|u|(y)dy \\& + C(\Omega)\int\frac{|u(x')-u(y)|} {|x'-y|^2}\rho|u|(y)dy \triangleq C(\Omega)(F_{c1}+F_{c2}+F_{c3}), 	
\end{split}\end{equation}
where $x'\triangleq \varphi^{-1} \left( \frac{\varphi(x)}{|\varphi(x)|}\right)\in \partial\Omega$.

As mentioned in \cite{fll2021}, to derive the precise control of $F_1$, we need to focus on the commutator-tpye terms $F_{c2}$ and $F_{c3}$, which represent the different between the periodic domains and general bounded domains.
In fact, for $F_{c1}$, \eqref{a1-0} and Sobolev's inequality leads to
\begin{equation}\label{f1-e1}
\begin{split}
 F_{c1} \leq R_T \left(\int |x-y|^{-\frac{3}{2}}dy \right)^{\frac{2}{3}}
\left(\int|u|^6 dy\right)^{\frac{1}{3}}\leq CR_T \|u\|_{H^1}^2\leq CR_T A_1^2.
\end{split}
\end{equation}
It remains to deal with $F_{c2}$ and $F_{c3}$.
Since the two commutator-tpye terms are similar, we merely estimate $F_{c2}$ by the similar method as that used in \cite{fll2021}.
By Sobolev's embedding theorem \cite[Chapter 5]{evans2010}, for $p\in(2,4)$ which will be determined later, we have for any $x, y\in \overline{\Omega}, $
\begin{equation*}
|u(x)-u(y)|\leq C(p )\|\nabla u\|_{L^p}|x-y|^{1-\frac{2}{p}},\quad \text{for any} \ \ x, y\in \overline{\Omega},
\end{equation*}
which implies
\begin{equation}\label{ff-2}
\ba
F_{c2} &\leq C(p) \int\frac{\|\nabla u\|_{L^p}\cdot|x-y|^{1-\frac{2}{p}}}{|x-y|^2}\rho|u|(y)dy\\&
=C(p)\|\nabla u\|_{L^p}\int |x-y|^{-(1+\frac{2}{p})} \rho|u|(y)dy.
\ea
\end{equation}
Then, for $\delta>0 $ and $\varepsilon_0\in (0, (p-2)/8)  $   which will be determined later,  on the one  hand,  we use  \eqref{g1} and \eqref{a1-0}  to get
\begin{equation}\label{ff-3}
\ba
 &
\int_{|x-y|<2\delta}|x-y|^{-\left(1+\frac{2}{p}\right)}\rho|u|(y)dy\\
&\leq C(p)R_T
\left(\int_{|x-y|<2\delta} |x-y|^{-\left(1+\frac{2}{p}\right) (1+\varepsilon_0)} dy\right)^{\frac{1}{1+\varepsilon_0}}
\|u\|_{L^{\frac{1+\varepsilon_0}{\varepsilon_0}}}\\
&\leq C(p)  R_T
\delta^{1-\frac{2}{p}-\frac{2\varepsilon_0}{1+\varepsilon_0}} \left(\frac{1+\varepsilon_0}{\varepsilon_0}\right)^{\frac{1}{2}}\|u\|_{H^1}\\
&\leq C(p)   R_T \varepsilon_0^{-\frac{1}{2}} A_1
\delta^{1-\frac{2}{p}-\frac{2\varepsilon_0}{1+\varepsilon_0}} .
\ea
\end{equation}
On the other hand,  for $\alpha=R_T^{-\frac{\beta}{2}}\alpha_{0}$ as in \eqref{al1},  we use  Lemma \ref{lem-rho-u} to derive
\begin{equation}
\ba
  &\int_{|x-y|>\delta}|x-y|^{-\left(1+\frac{2}{p}\right)}\rho|u|(y)dy\\
&\leq
C(p)
\left(\int_{|x-y|>\delta} |x-y|^{-\left(1+\frac{2}{p}\right) (\frac{2+\alpha}{1+\alpha})} dy\right)^{\frac{1+\alpha}{2+\alpha}}
\left(\int\rho^{2+\alpha}|u|^{2+\alpha}dx\right)^{\frac{1}{2+\alpha}}\\
&\leq C(p)R_T^{ \frac{1+\alpha}{2+\alpha}}
 \delta^{-\frac{2}{p}+\frac{\alpha}{2+\alpha}}.
\ea
\end{equation}
Now, we choose $\delta >0$ such that
\be \delta^{-\frac{2}{p}+\frac{\alpha}{2+\alpha}} =
A_1^{ \frac{2}{p} },\ee which in particular implies
\be \label{ff-4} A_1\delta^{1-\frac{2}{p}-\frac{2\varepsilon_0}{1+\varepsilon_0}} =A_1^{ \frac{2}{p} },\ee
provided we set \be\label{ff-5} \varepsilon_0=\frac{(p-2)\alpha}{8+(6-p) \alpha}\in \left(0, \frac{p-2}{8}\right). \ee
Then, it follows from \eqref{ff-3}-\eqref{ff-4} that
\begin{equation*}
\ba\label{ff-6}&
 \int |x-y|^{-\left(1+\frac{2}{p}\right)}\rho|u|(y)dy \\&\le \left( \int_{|x-y|<2\delta} +\int_{|x-y|> \delta}\right)|x-y|^{-\left(1+\frac{2}{p}\right)}\rho|u|(y)dy \\
&\leq C(p)  R_T\varepsilon_0^{-1/2}   A_1^{\frac{2}{p}}+C(p)R_T^{ \frac{1+\alpha}{2+\alpha}}A_1^{\frac{2}{p}}\\
&\leq C(p)  R_T^{1+\beta/4}   A_1^{\frac{2}{p}} ,
\ea
\end{equation*}
where in the last line we have used $\varepsilon_0^{-1/2}\le C(p)\alpha^{-1/2}\le C(p)R_T^{\beta/4} $ due to \eqref{ff-5}. Combining this,  \eqref{tdu-lp},  and
 \eqref{ff-2} shows that for any $\varepsilon>0,$ choosing $p=2/(1-\varepsilon),$
\begin{equation}\label{f1-e2}
\begin{split}
 F_{c2}&\leq C(p)\|\nabla u\|_{L^p}R_T^{1+\frac{\beta}{4}}A_1^{\frac{2}{p}} \\
&\leq C(p , \varepsilon )R_T^{\frac{3}{2}-\frac{1}{p}+\varepsilon+\frac{\beta}{4}}A_1^{1+\frac{2}{p}} \left(\frac{A_2^2}{ A_1^2}\right)^{\frac{1}{2}-\frac{1}{p}}
+C(p, \varepsilon)R_T^{1+\varepsilon+\frac{\beta}{4}}A_1^{1+\frac{2}{p}}\\
&\leq C(\varepsilon )R_T^{1+\frac{\beta}{4}+\frac{3\varepsilon}{2}}A_1^{2-\varepsilon}\left(\frac{A_2^2}{A_1^2}\right)^{\frac{\varepsilon}{2}}+C(\varepsilon)R_T^{1+\frac{\beta}{4}+\varepsilon}A_1^{2-\varepsilon}.
\end{split}
\end{equation}
where in the last line we have used  \eqref{u-h1} and \eqref{a1a2}.
Combining \eqref{f1-e1} and \eqref{f1-e2} with \eqref{f1-1} yields
\begin{equation}\label{f1-2}\begin{split}
\max_{x\in \overline\Omega}|F_1(x)|\leq & C(\Omega,\varepsilon )R_T^{1+\frac{\beta}{4}+\frac{3\varepsilon}{2}}A_1^{2-\varepsilon}\left(\frac{A_2^2}{A_1^2}\right)^{\frac{\varepsilon}{2}}+C(\Omega,\varepsilon )R_T^{1+\frac{\beta}{4}+\varepsilon}A_1^{2}.
\end{split}\end{equation}
Then, integrating \eqref{f1-2} with respect to $t$ and using the H\"{o}lder's inequality, \eqref{a1a2} and \eqref{a1-l2} with $\kappa\leq 2/\beta$, we obtain that
\begin{equation}\label{f1-3}\begin{split}
&\int_0^T \max_{x\in \overline\Omega}|F_1(x)|dt \\
\leq & C(\varepsilon )R_T^{1+\frac{\beta}{4}+\frac{3\varepsilon}{2}}\left(\int_0^T A_1^2dt\right)^{\frac{2-\varepsilon}{2}}\left(\int_0^T\frac{A_2^2}{A_1^2}dt\right)^{\frac{\varepsilon}{2}}+C(\varepsilon )R_T^{1+\frac{\beta}{4}+\varepsilon}\int_0^T A_1^2dt\\
\leq & C(\varepsilon )R_T^{1+\frac{\beta}{4}+3\varepsilon}.
\end{split}\end{equation}

Next, for $F_2$, in virtue of \eqref{h-lp}, it follows that
\begin{equation}\label{f2-1}
\begin{split}
|F_2| &\leq C \int|x-y|^{-1}|H\cdot\nabla H(y)|dy\\
&\leq C\left(\int |x-y|^{-\frac{3}{2}}dy \right)^{\frac{2}{3}}
\left(\int|H\cdot\nabla H(y)|^3 dy\right)^{\frac{1}{3}}\\
&\leq C \|H\|_{L^{12}}\|\nabla H\|_{L^{4}}\leq C\left(\frac{A_2^2}{ A_1^2}\right)^{\frac{1}{4}}A_1+ CA_1,
\end{split}
\end{equation}
which together with \eqref{a1a2} and \eqref{a1-l2} yields
\begin{equation}\label{f2-2}
\begin{split}
\int_0^T |F_2|dt
\leq C \int_0^T\left(\frac{A_2^2}{ A_1^2}\right)^{\frac{1}{4}}A_1dt+ C \int_0^TA_1dt\leq CR_T^{\frac{1}{4}+\frac{\kappa\beta}{4}}.
\end{split}
\end{equation}

Similarly, for the boundary term $F_3$, by \eqref{f-om-h1}, \eqref{a1a2} and \eqref{lem-no}, we have
\begin{equation}\label{f3-2}
\begin{split}
\int_0^T |F_3|dt& \leq C \int_0^T\|F\|_{H^1}dt
\leq CR_T^{\frac{1}{2}}\int_0^T\left(\frac{A_2^2}{ A_1^2}\right)^{\frac{1}{2}}A_1 dt+ C\leq C (\varepsilon)R_T^{1+\varepsilon}.
\end{split}
\end{equation}

Finally, for $F_0$, a direct computation yields that for $\alpha=R_T^{-\frac{\beta}{2}}\alpha_{0}$ as in Lemma \ref{r-u-al},
\begin{equation}\label{f0-1}
\begin{split}
|F_0|&\le C\int |x-y|^{-1}\rho(y)|u(y)|dy\\
&\le C\left(\int|x-y|^{-\frac{2+\alpha}{1+\alpha}} dy\right)^{\frac{1+\alpha}{2+\alpha}} \left(\int\rho^{2+\alpha}|u|^{2+\alpha}dy\right)^{\frac{1 }{2+\alpha}}\\&\le C\alpha^{-\frac{1+\alpha}{2+\alpha}}R_T^{\frac{1+\alpha}{2+\alpha}}\left(\int\rho |u|^{2+\alpha}dy\right)^{\frac{1 }{2+\alpha}} \\&\le C R_T^{\left(1+\frac{\beta}{2}\right)\frac{1+\alpha}{2+\alpha}}\le C R_T^{\frac{2+\beta}{3}}. 	
\end{split}
\end{equation}
Integrating \eqref{dt-rho} with respect to $t,$ we obtain after using  \eqref{f1-3}, \eqref{f2-2}, \eqref{f3-2}  and \eqref{f0-1} that
\bnn R_T^\beta\le C(\varepsilon) R_T^{\max\left\{1+\frac{\beta}{4}+3\varepsilon, {\frac{2+\beta}{3}}\right\}} .\enn
Since $\beta>4/3,$ this  in particular implies
\begin{equation}\label{rho-inf}
\sup_{0\leq t\leq T}\|\rho\|_{L^\infty}\leq C,
\end{equation}
with $\varepsilon$ suitably small and finishes the proof.
\end{proof}

\section{A priori estimates (II): lower and higher order ones}\label{se4}
In this section, we will proceed to study the lower and high order estimates based on the previous estimates. Here we adopt the method of the article \cite{hl2016,fll2021}, and mainly focus on the magnetic field and the boundary terms. We sketch it here for completeness.
We always assume that $(\rho_0,u_0,H_0)$ satisfies \eqref{dt2} and $(\rho,u,H)$ is the strong solution to \eqref{CMHD}-\eqref{boundary} on $\Omega\times(0,T]$ obtained by Lemma \eqref{lem-local}.

\begin{lemma} \label{lem-low1} There exists some positive constant $C$ depending only on $\Omega$, $T$, $\mu$, $\beta$, $\gamma$, $\nu$, $\|\rho_0\|_{L^\infty}$, $\|u_0\|_{H^1}$ and $\|H_0\|_{H^1}$ such that
\begin{equation}\label{low1}
\sup_{0\leq t\leq T}(\|u\|_{H^1}+\|H\|_{H^1})+\int_{0}^{T}\left( \|\omega\|_{H^{1}}^{2}+\|F\|_{H^{1}}^{2}+A_2^2\right)dt\leq C .
\end{equation}
\end{lemma}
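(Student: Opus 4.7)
The strategy is a direct bootstrap from Proposition \ref{lem-rho-inf} using the conditional estimate of Lemma \ref{lem-a1a2}. With the uniform bound $R_T\le C$ now in hand, choosing any fixed $\kappa\in(0,1)$ in Lemma \ref{lem-a1a2} yields
$$\sup_{0\le t\le T} A_1^2(t) + \int_0^T \frac{A_2^2(t)}{A_1^2(t)}\,dt \le C.$$
Since $A_1\ge e$ by definition, this also gives $\int_0^T A_2^2\,dt \le \sup_t A_1^2\cdot\int_0^T A_2^2/A_1^2\,dt \le C$, which is exactly the $A_2^2$ contribution in the target estimate.

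Next I would unpack the bound $\sup_t A_1^2\le C$ into the $H^1$ bounds on $u$ and $H$. The definition of $A_1$ directly gives $\sup_t(\|\omega\|_{L^2}+\|\nabla H\|_{L^2}+\|F/\sqrt{2\mu+\lambda(\rho)}\|_{L^2})\le C$, and the $L^\infty$ bound on $\rho$ upgrades the last term to $\sup_t\|F\|_{L^2}\le C$. Writing $\mathrm{div}\,u = (F+P+|H|^2/2)/(2\mu+\lambda)$ and using Proposition \ref{lem-rho-inf} together with Lemma \ref{lem-h-lp} (for the $L^4$ control of $H$) produces $\sup_t\|\mathrm{div}\,u\|_{L^2}\le C$. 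Lemma \ref{lem-vn} applied under $u\cdot n=0$ then yields $\sup_t\|\nabla u\|_{L^2}\le C$, and Poincar\'e's inequality gives $\sup_t\|u\|_{H^1}\le C$. The bound $\sup_t\|H\|_{H^1}\le C$ follows from $\sup_t\|\nabla H\|_{L^2}\le C$ together with $H|_{\partial\Omega}=0$ and Poincar\'e.

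For the time-integrated $H^1$ control on $F$ and $\omega$ I would appeal to the elliptic estimate \eqref{f-om-h1} proved inside Lemma \ref{lem-u-lp}, namely
$$\|F\|_{H^1}+\|\omega\|_{H^1} \le CR_T^{1/2}A_2 + CA_1 \le C(A_2+1),$$
using $R_T\le C$ and $\sup A_1\le C$. Squaring and integrating in $t$ and invoking the already established $\int_0^T A_2^2\,dt\le C$ gives $\int_0^T(\|F\|_{H^1}^2+\|\omega\|_{H^1}^2)\,dt\le C$, completing the proof.

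There is no genuine new obstacle here: all the analytical difficulty was absorbed into Proposition \ref{lem-rho-inf} (which eliminates the $R_T$-dependence) and Lemma \ref{lem-a1a2} (which couples $A_1$, $A_2$ through the differential inequality \eqref{a1a2-4}). The present lemma amounts to translating those two uniform estimates into bounds on the individual quantities appearing in \eqref{low1}. The only point deserving care is the passage from $\|F\|_{L^2}$ to $\|\mathrm{div}\,u\|_{L^2}$, which requires $\rho\in L^\infty$ so that $(2\mu+\lambda(\rho))^{-1}$ is uniformly bounded, and the $L^p$ control of $H$ so that the magnetic contribution $|H|^2$ does not spoil the estimate.
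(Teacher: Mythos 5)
Your proposal is correct and follows essentially the same route as the paper: combine the uniform density bound of Proposition \ref{lem-rho-inf} with Lemma \ref{lem-a1a2} to get $\sup_t A_1\le C$ and $\int_0^T A_2^2\,dt\le C$, then recover the $H^1$ bounds on $(u,H)$ via the div-curl estimate \eqref{a1-0} (which you rederive by hand) and Poincar\'e, and the time-integrated $H^1$ bounds on $(F,\omega)$ via \eqref{f-om-h1}. The only cosmetic difference is that you obtain $\int_0^T A_2^2\,dt\le C$ directly from $\sup_t A_1^2\cdot\int_0^T A_2^2/A_1^2\,dt$, whereas the paper re-integrates the differential inequality \eqref{a1a2-4}; both are immediate once $R_T\le C$ is known.
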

\begin{proof}
First, \eqref{a1a2} together with \eqref{a1-0}, \eqref{rho-inf} and Poincar\'{e} inequality gives
\begin{equation}
\displaystyle \sup_{0\leq t\leq T}(\|u\|_{H^1}+\|H\|_{H^1})\leq C\sup_{0\leq t\leq T}A_1\leq C.
\end{equation}
Next, by using the Gronwall's inequality, \eqref{a1a2-4} with \eqref{f-om-h1}, \eqref{rho-inf} yields
 \begin{equation}
\int_{0}^{T}\left(A_2^2+\|\omega\|_{H^{1}}^{2}+\|F\|_{H^{1}}^{2}\right)dt\leq C,
\end{equation}
and finishes the proof of Proposition \ref{lem-rho-inf}.
\end{proof}

\begin{lemma}\label{lem-a0}
There exists some positive constant $C$ depending only on $\Omega$, $T$, $\mu$, $\beta$, $\gamma$, $\nu$, $\|\rho_0\|_{L^\infty}$, $\|u_0\|_{H^1}$ and $\|H_0\|_{H^1}$ such that
\begin{align}\label{a02}
 \displaystyle  \sup_{0\leq t\leq T} \sigma A_2^2+\int_0^T \sigma(\|\nabla\dot{u}\|_{L^2}^2+\|\nabla H_t\|_{L^2}^2)dt \le   C,
\end{align}
with $\si=\si(t)\triangleq\min\{1,t \}$.
Moreover,  for any $p\in[1, \infty)$,  there is a positive constant $C(p)$ depending only on $p$, $\Omega$, $T$, $\mu$, $\beta$, $\gamma$, $\nu$, $\|\rho_0\|_{L^\infty}$, $\|u_0\|_{H^1}$ and $\|H_0\|_{H^1}$ such that
\begin{equation}\label{tdu-lp-t}
\sup_{0\leq t\leq T} \sigma\|\nabla u\|^2_{L^p}
\leq C(p).
\end{equation}
\end{lemma}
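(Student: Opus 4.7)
The plan is to upgrade the $L^2_t$-integrability of $A_2^2$ established in Lemma \ref{lem-low1} to a pointwise-in-time (weighted) bound, then deduce \eqref{tdu-lp-t} as a direct consequence via Lemma \ref{lem-u-lp}. The weight $\sigma(t)=\min\{1,t\}$ is dictated by the regularity of the initial data: $(\rho_0,u_0,H_0)\in W^{1,q}\times H^1\times H^1$ does not give control on $A_2(0)$. Throughout, I will use that $R_T$ is now bounded thanks to Proposition \ref{lem-rho-inf}.

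The first step is the standard Hoff-type material-derivative estimate for the momentum equation. Writing $\eqref{cmhd-2}$ as $\rho\dot u=\nabla F+\mu\nabla^{\bot}\omega-H\cdot\nabla H$, I apply the operator $\partial_t+\mathrm{div}(u\,\cdot\,)$ to $\eqref{CMHD1}_2$ (equivalently, use $(\rho\dot u)_t+\mathrm{div}(\rho u\otimes\dot u)=\rho\ddot u$) and test against $\dot u$. After integration by parts this produces
\[
\tfrac12\tfrac{d}{dt}\int\rho|\dot u|^{2}dx+\int(2\mu+\lambda)(\mathrm{div}\dot u)^{2}dx+\mu\int(\mathrm{curl}\,\dot u)^{2}dx=\sum_k J_k,
\]
where the $J_k$'s are cubic-type bulk terms in $\nabla u$, $F$, $P$, $H$, $\nabla H$, $H_t$, together with a boundary term $\int_{\partial\Omega}F\,\dot u\cdot n\,dS$. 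The latter is controlled by the identity \eqref{bdd3}: on $\partial\Omega$ we replace $\dot u\cdot n$ by $(u\cdot n^{\bot})u\cdot\nabla n^{\bot}\cdot n$, which together with $\|F\|_{H^{1}}\in L^{2}_{t}$ from Lemma \ref{lem-low1} and the trace inequality yields an admissible boundary contribution. The bulk terms are estimated by Hölder, Gagliardo–Nirenberg \eqref{g1}, Lemmas \ref{lem-vn}--\ref{lem-ud}, and the uniform $L^{\infty}_{t}L^{p}$ bound on $H$ from Lemma \ref{lem-h-lp}; all dangerous top-order factors $\|\nabla\dot u\|_{L^{2}}$ and $\|\nabla H_t\|_{L^{2}}$ are absorbed into the left-hand side.

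The second step is the parallel estimate for the magnetic field. Differentiating $\eqref{CMHD1}_{3}$ in $t$ and testing against $H_{t}$, I use $H|_{\partial\Omega}=0$ (hence $H_t|_{\partial\Omega}=0$) so that \emph{no} boundary term appears, obtaining
\[
\tfrac12\tfrac{d}{dt}\|H_{t}\|_{L^{2}}^{2}+\nu\|\nabla H_{t}\|_{L^{2}}^{2}=\int(H\cdot\nabla u-u\cdot\nabla H-H\,\mathrm{div}\,u)_{t}\cdot H_{t}\,dx,
\]
where the right-hand side is handled with the same toolkit, the key coupling term $\int H\cdot\nabla u_t\cdot H_t\,dx$ being split via $\|H\|_{L^{8}}\|\nabla u_t\|_{L^{2}}\|H_t\|_{L^{4}}$ and absorbed using $\|H_t\|_{L^{4}}\le C\|H_t\|_{L^{2}}^{1/2}\|\nabla H_{t}\|_{L^{2}}^{1/2}$ plus $\|\nabla u_t\|_{L^2}\le C(\|\nabla\dot u\|_{L^2}+\|\nabla(u\cdot\nabla u)\|_{L^2})$. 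Finally, reading $\nu\Delta H=H_{t}+u\cdot\nabla H-H\cdot\nabla u+H\,\mathrm{div}\,u$ as an elliptic identity and invoking the Dirichlet $H^{2}$-theory (together with the equivalence $\|\nabla^{2}H\|_{L^{2}}\sim\|\Delta H\|_{L^{2}}$ noted after Lemma \ref{lem-h-lp}) gives $\|\Delta H\|_{L^{2}}$ control by the same RHS quantities.

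Adding the two identities, multiplying by $\sigma(t)$ and integrating over $(0,T)$ gives
\[
\sigma(t)A_{2}^{2}(t)+\int_{0}^{t}\sigma\bigl(\|\nabla\dot u\|_{L^{2}}^{2}+\|\nabla H_{t}\|_{L^{2}}^{2}\bigr)ds\le\int_{0}^{t}\sigma'A_{2}^{2}ds+\int_{0}^{t}\sigma\,\Phi\,ds,
\]
with $\Phi$ of the form $C(1+A_{1}^{a})A_{2}^{2}+(\text{lower order})$; the $\sigma'A_{2}^{2}$ term is integrable by Lemma \ref{lem-low1}, and Gronwall together with Lemma \ref{lem-low1} closes \eqref{a02}. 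For \eqref{tdu-lp-t}, since $R_{T}\le C$, Lemma \ref{lem-u-lp} with $\varepsilon$ small gives $\|\nabla u\|_{L^{p}}\le C(p)A_{1}^{2/p}A_{2}^{1-2/p}+C(p)A_{1}$, and multiplying by $\sigma$ and using boundedness of $A_{1}$ together with \eqref{a02} yields the claim.

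The main obstacle I expect is the boundary term $\int_{\partial\Omega}F\dot u\cdot n\,dS$ in the $\dot u$-test of the momentum equation: direct estimation would cost $\|\nabla F\|_{L^2}$ times a trace of $\dot u$, which is not immediately absorbable. Overcoming it requires the geometric identity \eqref{bdd3} to turn $\dot u\cdot n$ on $\partial\Omega$ into a lower-order quadratic expression in $u$, at which point Lemma \ref{lem-low1} suffices; a secondary subtlety is treating the coupled term $\int H\cdot\nabla u_{t}\cdot H_{t}\,dx$ without losing a derivative, which requires the $L^{\infty}_{t}L^{p}$ bound on $H$ from Lemma \ref{lem-h-lp} and a careful Gagliardo--Nirenberg interpolation against $\nu\|\nabla H_{t}\|_{L^{2}}^{2}$.
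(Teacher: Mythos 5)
Your overall architecture is the same as the paper's (Hoff-type estimate on $\dot u$ for the momentum equation, time-differentiated magnetic equation tested with $H_t$, the elliptic identity giving $\|\Delta H\|_{L^2}\le C(\|H_t\|_{L^2}+\|\sqrt{\rho}\dot u\|_{L^2})+C$, then multiplication by $\sigma$ and Gronwall, and finally \eqref{tdu-lp-t} from Lemma \ref{lem-u-lp} once $R_T$ and $A_1$ are bounded). However, there are two genuine gaps in the execution.

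First, you have misidentified the boundary term. Testing $[\partial_t+\mathrm{div}(u\,\cdot)]$ of \eqref{cmhd-2} against $\dot u$ produces, after integration by parts, the boundary integral $\int_{\partial\Omega}\dot F\,(\dot u\cdot n)\,ds=\int_{\partial\Omega}(F_t+u\cdot\nabla F)(\dot u\cdot n)\,ds$, not $\int_{\partial\Omega}F\,\dot u\cdot n\,ds$. Replacing $\dot u\cdot n$ by the quadratic expression in $u$ via \eqref{bdd2}--\eqref{bdd3} still leaves $F_t$ on the boundary, and $F_t$ is controlled neither by $\|F\|_{H^1}\in L^2_t$ nor by anything in Lemma \ref{lem-low1}. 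The paper's resolution (see \eqref{J11}--\eqref{J12}) is essential and absent from your plan: one must integrate by parts in time, carrying the exact derivative $-\frac{d}{dt}\int_{\partial\Omega}(u\cdot\nabla n\cdot u)F\,ds$ through to the Gronwall stage (where it is absorbed using the \eqref{a1-i1}-type bound $C\|F\|_{H^1}\|\nabla u\|_{L^2}^2$), and then convert the remaining boundary integrals containing $u\cdot\nabla u$ and $u\cdot\nabla F$ into interior integrals by writing $u=(u\cdot n^{\bot})n^{\bot}$ on $\partial\Omega$ and applying the $\nabla^{\bot}$-divergence identity, since traces of $\nabla u$ and $\nabla F$ are not controlled.

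Second, your treatment of the coupling term $\int H\cdot\nabla u_t\cdot H_t\,dx$ via $\|\nabla u_t\|_{L^2}\le C(\|\nabla\dot u\|_{L^2}+\|\nabla(u\cdot\nabla u)\|_{L^2})$ does not close: $\|\nabla(u\cdot\nabla u)\|_{L^2}$ requires $\|\nabla^2u\|_{L^2}$, and at this stage $\|\nabla^2u\|_{L^2}$ cannot be bounded, because the elliptic estimate for $\nabla^2u$ involves $\nabla\lambda(\rho)=\beta\rho^{\beta-1}\nabla\rho$ (cf.\ \eqref{td2u-2}), i.e.\ $\|\nabla\rho\|_{L^q}$, which is only obtained in Lemma \ref{lem-high} under the $W^{1,q}$ hypothesis and which itself uses the present lemma; the constant in \eqref{a02} is allowed to depend only on $\|\rho_0\|_{L^\infty}$. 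The paper avoids this circularity by substituting $u_t=\dot u-u\cdot\nabla u$ in $K_2$, $K_3$ and integrating by parts in $K_3$ (using $\mathrm{div}H=0$ and $H_t|_{\partial\Omega}=0$) so that only $\nabla\dot u$, $\nabla H_t$ and $\|\nabla u\|_{L^4}$ appear, the last being controlled by $CA_2^2+C$ via \eqref{tdu-l4}. With these two repairs your argument coincides with the paper's proof; as written, both steps would fail.
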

\begin{proof} Operating $\dot{u}_{j}[\pa/\pa t+\mathrm{div} (u\cdot)] $ to $ (\ref{cmhd-2})_j,$ summing with respect to $j$, and integrating over $\Omega,$ together with $ \eqref{CMHD}_1 $, we get
\begin{align}\label{J0}
\displaystyle \frac{1}{2}\frac{d}{dt}\left(\int\rho|\dot{u}|^{2}dx\right)
& = \int (\dot{u}\cdot\nabla F_t+\dot{u}_{j}\,\mathrm{div}(u\partial_jF))dx \nonumber \\
&\quad+\mu\int (\dot{u}\cdot\nabla^{\bot}\omega_t+\dot{u}_{j}\mathrm{div}((\nabla^{\bot}\omega)_j\,u))dx \nonumber \\
&\quad+\int (\dot{u}\cdot(\mathrm{div} (H\otimes H))_t+\dot{u}_{j}\mathrm{div}((\mathrm{div} (H\otimes H_j)\,u))dx \nonumber \\
& \triangleq J_1+ J_2+ J_3.
\end{align}
Let us estimate $J_1, J_2$ and $J_3$.
By \eqref{navier-b} and \eqref{CMHD1}$_1$, a direct computation yields
\begin{equation}\label{J10}
\begin{split}
J_1=&\int(\dot{u}\cdot\nabla \dot{F}+\dot{u}_j\partial_jF\mathrm{div}u-\dot{u}\cdot\nabla u\cdot\nabla F)dx\\
=&\int_{\partial\Omega}\dot{F}(\dot{u}\cdot n)ds-\int\dot{F}\mathrm{div}\dot{u}dx+\int(\dot{u}_j\partial_jF\mathrm{div}u-\dot{u}\cdot\nabla u\cdot\nabla F)dx\\
=&\int_{\partial\Omega}\dot{F}(\dot{u}\cdot n)ds-\int(2\mu+\lambda)(\mathrm{div}\dot{u})^2dx+\int(2\mu+\lambda)\mathrm{div}\dot{u}\nabla u:(\nabla u)^Tdx \\
&+\int\beta\lambda(\mathrm{div}u)^2 \mathrm{div}\dot{u}dx
+\int \mathrm{div}\dot{u}\,H \cdot H_tdx+\int \mathrm{div}\dot{u}\,u \cdot \nabla H \cdot H dx \\
&+\gamma\int P\mathrm{div}u\mathrm{div}\dot{u} dx+\int(\dot{u}_j\partial_jF\mathrm{div}u+\dot{u}\cdot\nabla u\cdot\nabla F)dx \\
\le &\int_{\partial\Omega} F_t (\dot{u}\cdot n)ds+\int_{\partial\Omega} u\cdot\nabla F (\dot{u}\cdot n)ds-\mu\int (\mathrm{div}\dot{u})^2dx+C\|\nabla u\|_{L^4}^4 \\
&+C+\frac{\delta}{2}\|\nabla H_t\|_{L^2}^2+C\|H_t\|_{L^2}^2+C\|\Delta H\|_{L^2}^2+C \int |\dot{u}| |\nabla F||\nabla u|dx,
\end{split}
\end{equation}
where in the second equality we have used
\begin{align*}
\dot{F}=(2\mu+\lm)\mathrm{div}\dot u-\beta\lm(\mathrm{div}u)^2-(2\mu+\lm)\na u:\na u+u \cdot \nabla H \cdot H+\ga P\mathrm{div} u-H \cdot H_t.
\end{align*}
and
\begin{align}
&\|H \cdot H_t\|_{L^2}^2\leq C\|H\|_{L^4}^2\|H_t\|_{L^4}^2\leq \delta\|\nabla H_t\|_{L^2}^2+C\|H_t\|_{L^2}^2+C,\\
&\|u \cdot \nabla H \cdot H\|_{L^2}^2\leq C\|H\|_{L^8}^2\|u\|_{L^8}^2\|\nabla H\|_{L^4}^2\leq C\|\Delta H\|_{L^2}^2+C.
\end{align}
It is necessary to estimate the two boundary terms in the last inequality and using the observations \eqref{bdd2} and \eqref{bdd3}. For the first term on the righthand side of \eqref{J10}, we have
\begin{equation}\label{J11}\begin{split}
&\int_{\partial\Omega} F_t\dot{u}\cdot nds=-\int_{\partial\Omega} F_t\,(u\cdot\nabla n\cdot u)ds \\
= & -\frac{d}{dt}\left(\int_{\partial\Omega} (u\cdot\nabla n\cdot u)Fds\right)+\int_{\partial\Omega} \big(F\dot{u}\cdot\nabla n \cdot u+Fu\cdot\nabla n \cdot\dot{u}\big)ds \\
&\quad  -\int_{\partial\Omega} \big(F(u \cdot \nabla) u\cdot\nabla n \cdot u\big)ds-\int_{\partial\Omega} \big(Fu\cdot\nabla n \cdot(u \cdot \nabla) u\big)ds\\
\triangleq & -\frac{d}{dt}\left(J_{b0}\right)+J_{b1}+J_{b2}+J_{b3}.	
\end{split}\end{equation}
Note that $J_{b0}$ has been estimated in \eqref{a1-i1}, it only remains to estimate $J_{bi}, i=1,2,3$.
By \eqref{udot}, \eqref{f-om-h1}, \eqref{rho-inf1} and \eqref{low1}, we obtain
\begin{equation}\label{J11-1}\begin{split}
J_{b1}\leq &C\|  F\|_{H^1}\| u\|_{H^1}\| \dot{u}\|_{H^1}
\leq C(\|\nabla\dot{u}\|_{L^2}+C)(A_2+C)\\
\leq&\frac{\delta}{6} \|\nabla\dot{u}\|_{L^2}^2+CA_2^2+C.
\end{split}\end{equation}
For $J_{b2}$, using $\eqref{bdd3}$, \eqref{udot}, \eqref{f-om-h1}, \eqref{rho-inf1} and \eqref{low1} yields
\begin{equation}\label{J11-2}
\begin{split}
&|J_{b2}|=\left|\int_{\partial\Omega}F((u\cdot\nabla u)\cdot\nabla n\cdot u)ds \right|
=\left|\int_{\partial\Omega}F(u\cdot n^\bot)n^\bot \cdot\nabla u_i\partial_i n_j u_jds \right|\\
=&\left|\int_{\Omega}\nabla^\bot \cdot\left(\nabla u_i\partial_i n_j u_jF(u\cdot n^\bot)\right)dx \right|
=\left|\int_{\Omega}\nabla u_i\cdot\nabla^\bot \left(\partial_i n_j u_jF(u\cdot n^\bot)\right)dx \right|\\ \le& C\int_\Omega |\nabla u|\left(|F||u|^2+|F||u||\nabla u|+|u|^2|\nabla F|\right)dx \\ \le & C\|\nabla u \|_{L^4}\left(\|F\|_{L^4}\|u\|_{L^4}^2+\|F\|_{L^4}\|u\|_{L^4}\|\nabla u\|_{L^4}+\|\nabla F\|_{L^2}\|u\|_{L^8}^2\right)\\ \le & C\|\nabla u \|_{L^4}^4+C \|F\|_{H^1}^2+C \leq C\|\nabla u\|_{L^4}^4+CA_2^2+C.
\end{split}
\end{equation}
Similarly, for $J_{b3}$, we also have
\begin{equation}
\displaystyle |J_{b3}|\leq C\|\nabla u\|_{L^4}^4+CA_2^2+C.
\end{equation}
which together with \eqref{J11}-\eqref{J11-2} leads to
\begin{equation}\label{J11-3}\begin{split}
\int_{\partial\Omega} F_t\dot{u}\cdot nds
\leq -\frac{d}{dt}\!\left(\int_{\partial\Omega}\!\! (u\cdot\nabla n\cdot u)Fds\right)\!+\frac{\delta}{6}\|\nabla\dot{u}\|_{L^2}^2\!+C\|\na u\|_{L^4}^4\!+CA_2^2\!+C.	
\end{split}\end{equation}
For the second term on the righthand side of \eqref{J10}, by \eqref{udot}, \eqref{f-om-h1}, \eqref{rho-inf1} and \eqref{low1}, we have
\begin{equation}\label{J12}
\begin{split}
& \int_{\partial\Omega}(u\cdot\nabla F)(\dot{u}\cdot n)ds =\int_{\partial\Omega}(u\cdot n^\bot)n^\bot\cdot\nabla F(\dot{u}\cdot n)  ds\\
=& \int_\Omega\nabla^\bot\cdot((u\cdot n^\bot) \nabla F(\dot{u}\cdot n) )dx
= \int_\Omega \nabla F\cdot\nabla^\bot((u\cdot n^\bot)(\dot{u}\cdot n) )dx\\
 \leq&C\int_\Omega |\nabla F||\dot{u}||\nabla u|dx+C\int_\Omega |\nabla F||\nabla \dot u||u|dx   \\
 \leq&C\|\nabla F\|_{L^2}\|\dot u\|_{L^4}\|\nabla u\|_{L^4} +C\|\nabla F\|_{L^2}\|\nabla\dot u\|_{L^2}\|\nabla u\|_{L^4}^{1/2} \\
\leq&\frac{\delta}{6} \|\nabla \dot u\|_{L^2}^2 +C\|\nabla u\|_{L^4}^2A_2^2+C,
\end{split}
\end{equation}
which implies that the last term of $J_1$ can be also bounded by the righthand side of \eqref{J12}.
Together with \eqref{J10}, \eqref{J11-3} and \eqref{J12}, we have
\begin{equation}\label{J1}
\begin{split}
& J_1 \leq -\mu\int (\mathrm{div}\dot{u})^{2}dx -\left(\int_{\partial\Omega} (u\cdot\nabla n\cdot u)Fds\right)_t+\frac{\delta}{3}\|\nabla\dot{u}\|\ltwo+\frac{\delta}{2}\|\nabla H_t\|\ltwo\\
& \quad +C  \|\nabla u\|^4_{L^4}+C (1+\|\nabla u\|^2_{L^4})A_2^2+C.
\end{split}
\end{equation}

Next, by $ \omega_t=\curl \dot u-u\cdot \na \omega-(\na^{\bot}u)^{T}:\nabla u$ and \eqref{navier-b}, a straightforward calculation leads to
\begin{equation}\label{J20}
\begin{split}
J_2 &=-\mu\int \curl\dot{u}\omega_tdx-\mu\int u \cdot \nabla \dot{u}\cdot(\nabla^{\bot}\omega)dx\\
&=-\mu\int |\curl \dot{u}|^{2}dx+\mu\int \curl\dot{u}(\na^{\bot}u)^{T}:\nabla u dx\\&\quad
+\mu\int \curl\dot{u}u\cdot \na \omega dx-\mu\int u \cdot \nabla \dot{u}\cdot(\nabla^{\bot}\omega)dx\\
&=-\mu\int |\curl \dot{u}|^{2}dx+\mu\int \curl\dot{u}(\na^{\bot}u)^{T}:\nabla u dx \\
&\quad+\mu\int \curl\dot{u}\,\mathrm{div}u \,\omega dx+\mu\int\nabla^{\bot} u \cdot \nabla \dot{u}\,\omega dx\\
&= -\mu\int |\curl \dot{u}|^{2}dx+\frac{\delta}{3} \|\nabla\dot{u}\|_{L^2}^2+C \|\nabla u\|_{L^4}^4.
\end{split}\end{equation}
Finally, by \eqref{g1} and \eqref{h-lp}, it shows that
\begin{equation}\label{J30}
\begin{split}
\displaystyle J_3
& =-\int \nabla\dot{u}:(H\otimes H)_t dx-\int H \cdot \nabla H_j u \cdot \nabla\dot{u}_{j}dx\\
& \leq C (\|\nabla\dot{u}\|_{L^2} \|H\|_{L^4} \|H_t\|_{L^4}+\|\nabla\dot{u}\|_{L^2} \|H\|_{L^8} \|\nabla H\|_{L^4}\|u\|_{L^8} ) \nonumber \\
& \leq \frac{\delta}{3} \|\nabla\dot{u}\|_{L^2}^2+\frac{\delta}{2}\|\nabla H_t\|\ltwo+C(\delta)(\|H_t\|_{L^2}^2+\|\Delta H\|_{L^2}^2).
\end{split}\end{equation}
Combining \eqref{J1}, \eqref{J20} with \eqref{J30}, we deduce from \eqref{J0} that
\begin{equation}\label{J01}
\begin{split}
&\frac{1}{2}\frac{d}{dt}\left(\|\sqrt{\rho}\dot{u}\|_{L^2}^2\right)+\mu \|\mathrm{div}\dot{u}\|_{L^2}^2+\mu \|\curl\dot{u}\|_{L^2}^2 \\
&\leq -\frac{d}{dt}\left(\int_{\partial\Omega} (u\cdot\nabla n\cdot u)Fds\right)+
\delta \|\nabla\dot{u}\|\ltwo+ \delta\|\nabla H_t\|\ltwo +C  \|\nabla u\|^4_{L^4}\\
& \quad +C (1+\|\nabla u\|_{L^4}^2)A_2^{2}+C\\
&\leq -\frac{d}{dt}\left(\int_{\partial\Omega} (u\cdot\nabla n\cdot u)Fds\right)+
\delta \|\nabla\dot{u}\|\ltwo+ \delta\|\nabla H_t\|\ltwo 
+C (1+A_2^2)A_2^{2}+C,
\end{split}\end{equation}
where in the last inequality we have used
\begin{equation}\label{tdu-l4}
\displaystyle \|\nabla u\|^4_{L^4}\leq C A_2^2+C,
\end{equation}
due to \eqref{tdu-lp} and \eqref{rho-inf}.

Next, we need to estimate the term $\|\nabla H_t\|_{L^2}$. Differentiating \eqref{CMHD}$_3$ with respect to $t$, multiplying it by $H_{t}$, and integrating over $\Omega$ lead to
\begin{align}\label{ht1}
&\quad \left(\frac{1}{2}\|H_t\|_{L^2}^2\right)_t+\nu \|\nabla H_t\|_{L^2}^2 \nonumber \\
 &= \int  (H_t \cdot \nabla u-u \cdot \nabla H_t-H_t \mathrm{div} u)\cdot H_t dx 
 + \int  (H \cdot \nabla \dot{u}-\dot{u} \cdot \nabla H-H \mathrm{div} \dot{u})\cdot H_t dx \nonumber \\
&\quad - \int  (H \cdot \nabla (u \cdot \nabla u)-(u \cdot \nabla u)\cdot \nabla H-H \mathrm{div}(u \cdot \nabla u) )\cdot H_t dx \nonumber \\
& \triangleq K_1+K_2+K_3.
\end{align}
By Lemma \ref{lem-gn} and Lemma \ref{lem-low1}, a direct calculation leads to
\begin{align}\label{htk1}
K_1 
& \leq C (\|H_t\|_{L^4}^2\|\nabla u\|_{L^2}
+\|u\|_{L^4}\|H_t\|_{L^4}\|\nabla H_t\|_{L^2})
\leq \frac{\delta}{3} \|\nabla H_t\|_{L^2}^{2}+C\|H_t\|_{L^2}^{2}.
\end{align}
Similarly, by \eqref{udot}, \eqref{h-lp} and \eqref{low1}, it follows
\begin{equation}\label{htk20}\begin{split}
K_2 
& \leq C \|H\|_{L^4}\|H_t\|_{L^4}\|\nabla \dot{u}\|_{L^2} +C \|\dot{u}\|_{L^4}\|H_t\|_{L^4}\|\nabla H\|_{L^2}\\
&\leq \delta\|\nabla \dot{u}\|_{L^2}^{2}+\frac{\delta}{3}\|\nabla H_t\|_{L^2}^{2}+C\|H_t\|_{L^2}^{2}+C.
\end{split}\end{equation}
By Sobolev trace theorem, \eqref{h-lp} and \eqref{low1}, we have
\begin{equation}\label{htk3}\begin{split}
K_3 
& =\int   H \cdot\nabla H_t \cdot( u \cdot \nabla u) dx
-\int   u \cdot \nabla u \cdot \nabla H_t \cdot H dx \\
& \leq C \|H\|_{L^8}\|\nabla H_t\|_{L^2}\|\nabla u\|_{L^4}\|u\|_{L^8} \\
&\leq \frac{\delta}{3} \|\nabla H_t\|_{L^2}^{2}+CA_2^2+C.
\end{split}\end{equation}
Putting \eqref{htk1}, \eqref{htk20} and \eqref{htk3} into \eqref{ht1}, we have
\begin{align}\label{ht3}
\frac{d}{dt}\left( \|H_t\|_{L^2}^2\right)+ \nu\|\nabla H_t\|_{L^2}^2\leq \delta(\|\nabla \dot{u}\|_{L^2}^{2}+\|\nabla H_t\|_{L^2}^{2})+CA_2^2+C.
\end{align}
Finally, by Lemma \ref{lem-gn} and \eqref{CMHD}$_3$, it holds
\begin{align}\label{h2xd1}\begin{split}
\|\Delta H\|_{L^2}&\leq C(\|H_t\|_{L^2}+\|\nabla H\|_{L^4}\|u\|_{L^4}+\|H\|_{L^4}\|\nabla u\|_{L^4}) \\
 &\leq C\|H_t\|_{L^2}+C\|\Delta H\|_{L^2}^{1/2}+CA_2^{1/2}+C\\
 &\leq \frac{1}{2}\|\Delta H\|_{L^2}+C\|H_t\|_{L^2}+C\|\sqrt{\rho}\dot{u}\|_{L^2}+C,
\end{split}
\end{align}
which implies
\begin{align}\label{h2xd2}\begin{split}
\|\Delta H\|_{L^2}\leq C(\|H_t\|_{L^2}+\|\sqrt{\rho}\dot{u}\|_{L^2})+C.
\end{split}
\end{align}
Choosing $\delta$ small enough, we deduce after adding \eqref{J01} and \eqref{ht3}  together that
\begin{align}\label{a20}\begin{split}
&\frac{d}{dt}\left(\|\sqrt{\rho}\dot{u}\|_{L^2}^2+\|H_t\|_{L^2}^2\right)+\|\nabla\dot{u}\|_{L^2}^2+\|\nabla H_t\|_{L^2}^2 \\
\leq &-\frac{d}{dt}\left(\int_{\partial\Omega} (u\cdot\nabla n\cdot u)Fds\right)+ C(1+A_2^2)A_2^2+C \\
\leq &-\frac{d}{dt}\left(\int_{\partial\Omega} (u\cdot\nabla n\cdot u)Fds\right)+ C(1+A_2^2)(\|\sqrt{\rho}\dot{u}\|_{L^2}^2+\|H_t\|_{L^2}^2)+CA_2^2+C.	
\end{split}\end{align}
Multiplying \eqref{a20} by $\sigma$, using \eqref{a1-i1}, \eqref{low1}, \eqref{h2xd2},  and Gronwall's inequality, we have
\begin{align}\label{a21}
&\quad \sup_{0\leq t\leq T}\sigma(\|\sqrt{\rho}\dot{u}\|_{L^2}^2+\|H_t\|_{L^2}^2)+\int_0^T\sigma(\|\nabla\dot{u}\|_{L^2}^2+\|\nabla H_t\|_{L^2}^2)dt\leq C.
\end{align}
Combining \eqref{h2xd2} and \eqref{a21}, we give \eqref{a02}.

Finally, it follows from \eqref{h-lp}, \eqref{tdu-lp},  and \eqref{rho-inf1} that for $p\geq 2$,
\begin{equation*}
\begin{split}
&\sigma\|\nabla u\|_{L^p}^2\leq C(p)\sigma\|\sqrt\rho\dot{u}\|^2_{L^2}+C(p)\leq C(p),\\
\end{split}
\end{equation*}
which shows \eqref{tdu-lp-t} and completes the proof of Lemma \ref{lem-a0}.
\end{proof}

\begin{lemma}\label{lem-low2}
Assume that \eqref{b-g} holds.  Then for any $p>2$,  there exists some positive constant $C$ depending only on $p$, $\Omega$, $T$, $\mu$, $\beta$, $\gamma$, $\nu$, $\|\rho_0\|_{L^\infty}$, $\|u_0\|_{H^1}$ and $\|H_0\|_{H^1}$ such that
\begin{equation}
\begin{split}\label{low2}
&\int_0^T\Big((\|\rho\dot{u}\|_{L^p}+\|\Delta H\|_{L^p})^{1+1/p}+t(\|\dot{u}\|_{H^1}^2+\|\Delta H\|_{L^p}^2)\Big)dt\leq C.
\end{split}
\end{equation}
Moreover,
\begin{equation}
\begin{split}\label{f-inf}
& \int_0^T\!\!\Big(\!(\|\nabla F\|_{L^p}\!+\!\|\nabla\omega\|_{L^p}\!+\!\|F\|_{L^\infty}\!+\!\|\omega\|_{L^\infty})^{1+\frac1p}\!+\!t(\|\nabla F\|_{L^p}^2\!+\!\|\nabla\omega\|_{L^p}^2)\Big)dt\leq C,
\end{split}
\end{equation}
and that
\be\label{inf-r}\inf_{(x,t)\in \Omega\times(0,T)}\rho(x,t)\ge C^{-1}\inf_{x\in \Omega}\rho_0(x).\ee
\end{lemma}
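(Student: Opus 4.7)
The strategy is to carry out four steps, combining the weighted bounds from Lemma \ref{lem-a0} with Gagliardo-Nirenberg interpolation, the elliptic $L^p$-theory of \eqref{f-ow}, and characteristics. First I will bound $\|\rho\dot u\|_{L^p}$ and $\|\Delta H\|_{L^p}$; then transfer these to $\|\nabla F\|_{L^p}$ and $\|\nabla\omega\|_{L^p}$ via \eqref{f-ow}; then use the 2D Sobolev embedding $W^{1,p}\hookrightarrow L^\infty$ for $p>2$ to extract the $L^\infty$ parts of \eqref{f-inf}; and finally derive \eqref{inf-r} by integrating the continuity equation along the flow of $u$.

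For $\|\rho\dot u\|_{L^p}$, since $\rho$ is bounded above I reduce to estimating $\|\dot u\|_{L^p}$. The key tool is the 2D Gagliardo-Nirenberg interpolation
\[
\|\dot u\|_{L^p}\le C\|\sqrt{\rho}\dot u\|_{L^2}^{2/p}\|\nabla\dot u\|_{L^2}^{1-2/p}+C\|\sqrt{\rho}\dot u\|_{L^2},
\]
which follows from \eqref{g1} combined with the Poincar\'e-type inequality \eqref{udot}. Using $\sup_{[0,T]}\sigma\|\sqrt{\rho}\dot u\|_{L^2}^2\le C$ from Lemma \ref{lem-a0}, raising to the power $1+1/p$ and applying H\"older against $\int_0^T\sigma\|\nabla\dot u\|_{L^2}^2\,dt\le C$ produces a residual weight $\sigma^{-\alpha}$ with $\alpha=p(p+1)/(p^2+p+2)<1$, which is exactly the borderline value for time integrability; this is why the Lebesgue index $1+1/p$ is precisely admissible. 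The bound $\int_0^T t\|\dot u\|_{H^1}^2\,dt\le C$ is then immediate from Lemma \ref{lem-a0} and \eqref{udot}.

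For $\|\Delta H\|_{L^p}$, I treat $\eqref{CMHD1}_3$ with the Dirichlet datum $H|_{\partial\Omega}=0$ as an elliptic equation, whence standard $L^p$-regularity yields
\[
\|\Delta H\|_{L^p}\le C\bigl(\|H_t\|_{L^p}+\|H\cdot\nabla u\|_{L^p}+\|u\cdot\nabla H\|_{L^p}+\|H\,\mathrm{div}\,u\|_{L^p}\bigr).
\]
Each term is handled by H\"older, Lemma \ref{lem-h-lp}, \eqref{tdu-lp-t}, and Sobolev embedding, together with the analogous weighted interpolation for $H_t$ using $\sup\sigma\|H_t\|_{L^2}^2\le C$ and $\int_0^T\sigma\|\nabla H_t\|_{L^2}^2\,dt\le C$ from Lemma \ref{lem-a0}. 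Substituting the resulting bounds on $\|\rho\dot u\|_{L^p}$ and $\|H\cdot\nabla H\|_{L^p}\le C\|H\|_{L^\infty}\|\nabla H\|_{L^p}$ into \eqref{f-ow} produces the corresponding bounds on $\nabla F$ and $\nabla\omega$; the $L^\infty$ part of \eqref{f-inf} then follows by Sobolev embedding.

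Finally, for \eqref{inf-r}, the decomposition $(2\mu+\lambda)\mathrm{div}\,u=F+P+|H|^2/2$ together with $P\le C$ and $\|H\|_{L^\infty}^2\le C\|\Delta H\|_{L^p}+C$ (Sobolev again) converts the $L^{1+1/p}$-control of $\|F\|_{L^\infty}$ and $\|\Delta H\|_{L^p}$ into $\int_0^T\|\mathrm{div}\,u\|_{L^\infty}\,dt\le C$. Integrating $D(\ln\rho)/Dt=-\mathrm{div}\,u$ along characteristics of the flow of $u$ then yields $\inf_{\Omega\times(0,T)}\rho\ge e^{-C}\inf_\Omega\rho_0$, which gives \eqref{inf-r}. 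The main obstacle is the delicate H\"older balance in the first step: the Lebesgue index $1+1/p$ is precisely the largest for which the residual $\sigma$-singularity coming from $\sup\sigma\|\sqrt\rho\dot u\|_{L^2}^2\le C$ remains time-integrable when combined with $\int_0^T\sigma\|\nabla\dot u\|_{L^2}^2\,dt\le C$.
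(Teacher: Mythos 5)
Your steps for $\Delta H$, the transfer to $\nabla F,\nabla\omega$ via \eqref{f-ow}, the $L^\infty$ bounds by embedding, and the derivation of \eqref{inf-r} from $\|\mathop{\mathrm{div}}\nolimits u\|_{L^\infty}$ and the continuity equation all match the paper's argument. However, the first and crucial step contains a genuine gap. The interpolation inequality you invoke,
\begin{equation*}
\|\dot u\|_{L^p}\le C\|\sqrt{\rho}\,\dot u\|_{L^2}^{2/p}\|\nabla\dot u\|_{L^2}^{1-2/p}+C\|\sqrt{\rho}\,\dot u\|_{L^2},
\end{equation*}
is not a consequence of \eqref{g1} together with \eqref{udot}: \eqref{g1} carries the \emph{unweighted} norm $\|\dot u\|_{L^2}$ in both the interpolation factor and the additive remainder, while \eqref{udot} contains no density-weighted norm at all. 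Worse, the inequality as stated is false when vacuum is present: if $\dot u$ is large on a region where $\rho$ vanishes and small on $\operatorname{supp}\rho$, the right-hand side can vanish while the left-hand side does not. The only available substitutes introduce an additive term $C\|\nabla\dot u\|_{L^2}$ at full power (either from the mass-type Poincar\'e inequality $\|v\|_{L^2}\le C\|\sqrt\rho v\|_{L^2}+C\|\nabla v\|_{L^2}$, or directly from \eqref{udot}), and this destroys precisely the borderline balance you emphasize: with only $\int_0^T\sigma\|\nabla\dot u\|_{L^2}^2\,dt\le C$ from \eqref{a02}, the integral $\int_0^T\|\nabla\dot u\|_{L^2}^{1+1/p}\,dt$ is \emph{not} controlled, since H\"older leaves the weight $\sigma^{-(p+1)/(p-1)}$, which is non-integrable near $t=0$ for every $p>2$.

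The paper circumvents this by never detaching the density from the low-order factor: it interpolates $\|\rho\dot u\|_{L^p}\le \|\rho\dot u\|_{L^2}^{2(p-1)/(p^2-2)}\|\rho\dot u\|_{L^{p^2}}^{p(p-2)/(p^2-2)}$, bounds $\|\rho\dot u\|_{L^2}\le C\|\sqrt\rho\dot u\|_{L^2}$ using the upper bound of $\rho$, and applies \eqref{udot} only to the high-order factor, $\|\dot u\|_{L^{p^2}}\le C(\|\nabla\dot u\|_{L^2}+\|\nabla u\|_{L^2}^2)$, whose additive remainder is $\|\nabla u\|_{L^2}^2\le C$ by \eqref{low1} rather than a standalone $\|\nabla\dot u\|_{L^2}$. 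The resulting exponent of $\|\nabla\dot u\|_{L^2}$ is then strictly below the critical one, and the $\sigma$-weighted bounds of \eqref{a02} yield the $L^{1+1/p}$ time-integrability in \eqref{low2}. You should replace your claimed inequality by this density-weighted interpolation (or an equivalent device); the remainder of your argument can then proceed as written.
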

\begin{proof}
First, it follows from \eqref{udot}, \eqref{low1} and \eqref{a02} that
\begin{equation}\label{h1}
\int_0^Tt\|\dot{u}\|_{H^1}^2dt\leq C\int_0^Tt(\|\nabla\dot{u}\|_{L^2}^2+C)dt\leq C.
\end{equation}
By using \eqref{g1}, \eqref{udot}, \eqref{h-lp}, \eqref{rho-inf1} and \eqref{low1}, we have
\begin{equation*}
\begin{split}
\|\rho\dot{u}\|_{L^p} &\leq C\|\rho\dot{u}\|_{L^2}^{2(p-1)/(p^2-2)}\|\dot{u}\|_{L^{p^2}}^{p(p-2)/(p^2-2)}\\
&\leq C+ C\|\rho\dot{u}\|_{L^2} +C\|\rho\dot{u}\|_{L^2}^{2(p-1)/(p^2-2)}\|\nabla\dot{u}\|_{L^2}^{p(p-2)/(p^2-2)}, 
\end{split}
\end{equation*}
which together with \eqref{low1} and \eqref{a02} implies that
\begin{equation}\label{rho-du-lp}
\begin{split}
&\int_0^T\|\rho\dot{u}\|_{L^p}^{1+1/p}dt
\leq C\int_0^T\left(\|\sqrt\rho\dot{u}\|_{L^2}^2+t\|\nabla\dot{u}\|_{L^2}^2 +t^{-1+\frac{2}{p^3-p^2-2p+2}}\right)dt\leq C.
\end{split}
\end{equation}
From \eqref{CMHD}$_3$, \eqref{boundary}, \eqref{tdu-lp-t} and H\"{o}lder's inequality, we have
\begin{equation*}
\begin{split}
\|\Delta H\|_{L^p} &\leq C (\|H_t\|_{L^p}+\|H \cdot \nabla u\|_{L^p}+\|u \cdot \nabla H\|_{L^p}+\|H \mathop{\mathrm{div}}\nolimits u\|_{L^p})\\
&\leq C (\|H_t\|_{L^p}+\|H_t\|_{L^2}^{2/p}\|\nabla H_t\|_{L^{2}}^{(p-2)/p}
+\|\Delta H\|_{L^2}^{1/4}\sigma^{-1/2}+\|\Delta H\|_{L^2})+C,
\end{split}
\end{equation*}
which together with \eqref{low1} and \eqref{a02} implies that
\begin{equation}\label{td2h-lp}
\begin{split}
&\int_0^T\|\Delta H\|_{L^p}^{1+1/p}+t\|\Delta H\|_{L^p}^2dt\\
\leq& C\int_0^T\left(\|H_t\|_{L^2}^2+t\|\nabla H_t\|_{L^2}^2 +t^{-1+\frac{2}{p^2-p}}+\|\Delta H\|_{L^2}^2+\sigma^{-1+\frac{3p-5}{7p-1}}\right)dt+C\leq C.
\end{split}
\end{equation}
Then \eqref{h1}, \eqref{rho-du-lp} and \eqref{td2h-lp} gives \eqref{low2}.

Next, we obtain from \eqref{f-ow}, \eqref{low1}, \eqref{a02} and \eqref{low2} that
\begin{equation}\label{tdf-lp}
\begin{split}
& \int_0^T\!(\|\nabla F\|_{L^p}\!+\|\nabla\omega\|_{L^p}+t(\|\nabla F\|_{L^p}^2\!+\|\nabla\omega\|_{L^p}^2)dt\leq C,
\end{split}
\end{equation}
Then, \eqref{g2}, \eqref{f-ow}, \eqref{f-l2} and \eqref{low1} yield that
\begin{equation}\label{f-inf-2}
\begin{split}
\|F\|_{L^\infty} +  \|\omega\|_{L^\infty}&\leq C\|F\|_{L^2}+ C\|F\|_{L^2}^{\frac{p-2}{2(p-1)}}\|\nabla F\|^{\frac{p}{2(p-1)}}_{L^p}+ C\|\omega\|_{L^2}^{\frac{p-2}{2(p-1)}}\|\nabla \omega\|^{\frac{p}{2(p-1)}}_{L^p}  \\
&\leq C+C\|\rho\dot{u}\|^{\frac{p}{2(p-1)}}_{L^p}+C\|\Delta H\|^{\frac{p}{2(p-1)}}_{L^2} , \\
\end{split}
\end{equation}
which together with  \eqref{low2} and \eqref{f-inf-2} gives \eqref{f-inf}.
Similarly, from \eqref{flux}, \eqref{rho-inf1} and \eqref{low1}, it follows that
\begin{equation}\label{div-inf}
\begin{split}
\|\mathop{\mathrm{div}}\nolimits u\|_{L^\infty} &\leq C(\|F\|_{L^\infty} +  \|P\|_{L^\infty}+  \|H\|_{L^\infty}^2)\\
&\leq C+C\|\rho\dot{u}\|^{\frac{p}{2(p-1)}}_{L^p}+C\|\Delta H\|^{\frac{p}{2(p-1)}}_{L^2} , \\
\end{split}
\end{equation}
and \eqref{inf-r} is a direct consequence of  \eqref{div-inf}, \eqref{low2} and  \eqref{CMHD}$_1.$
\end{proof}

Upon now, we have finished the lower order a priori estimates, and will turn to the higher order ones. We follow \cite{fll2021} to derive our final a priori estimates with some modifications due to the magnetic field.
\begin{lemma}\label{lem-high}
Assume that \eqref{b-g} holds.  Then,  for $q > 2$, there exists a positive constant $\tilde C$ depending only on $q$, $\Omega$, $T$, $\mu$, $\beta$, $\gamma$, $\nu$, $\|\rho_0\|_{W^{1,q}}$, $\|u_0\|_{H^1}$ and $\|H_0\|_{H^1}$ such that
\begin{equation}\label{rho-h}
\sup_{0\leq t\leq T}\!\left(\|\rho\|_{W^{1, q}}\!\!+\!\|\rho_t\|_{L^2}\!\!+\!t\|u\|^2_{H^2}\right)\!+\!\int^T_0\!\!\!(\|\nabla^2 u\|_{L^q}^{1\!+\!\frac{1}{q}}\!\!+\!t\|\nabla^2 u\|_{L^q}^2\!\!+\!t\|u_t\|^2_{H^1})dt\! \leq \tilde C.  
\end{equation}
\end{lemma}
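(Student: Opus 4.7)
The plan is to close the higher-order estimates through a logarithmic Gronwall argument that couples an $L^q$ transport bound on $\nabla\rho$ with an elliptic estimate on $\nabla^2 u$, invoking the Beale--Kato--Majda-type inequality of Lemma \ref{lem-bkm} together with the $L^\infty$-integrability of $F$ and $\omega$ from Lemma \ref{lem-low2}. Since $\|\rho\|_{L^q}$ is already controlled by $\|\rho\|_{L^\infty}$ via \eqref{rho-inf1}, it suffices to bound $\|\nabla\rho\|_{L^q}$.

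First I would apply $\nabla$ to $\eqref{CMHD}_1$, test against $q|\nabla\rho|^{q-2}\nabla\rho$ and integrate over $\Omega$. Because $u\cdot n=0$ on $\partial\Omega$ by \eqref{navier-b}, the boundary contribution from the convective term vanishes and one obtains
\begin{equation*}
\frac{d}{dt}\|\nabla\rho\|_{L^q}\leq C\|\nabla u\|_{L^\infty}\|\nabla\rho\|_{L^q}+C\|\nabla^2 u\|_{L^q}.
\end{equation*}
To control $\|\nabla^2 u\|_{L^q}$ I would combine the div-curl inequality of Lemma \ref{lem-vn} (legal since $u\cdot n=0$) with the $L^q$ elliptic bound \eqref{f-ow} for the Neumann problem \eqref{de-f} and Dirichlet problem \eqref{de-om}, and with the identity $(2\mu+\lambda)\mathrm{div}\,u=F+P+|H|^2/2$; differentiating the identity produces the coupling
\begin{equation*}
\|\nabla^2 u\|_{L^q}\leq C(\|\rho\dot u\|_{L^q}+\|\nabla H\|_{L^q})+C\|\nabla\rho\|_{L^q}(1+\|\mathrm{div}\,u\|_{L^\infty})+C,
\end{equation*}
which exhibits the characteristic linear dependence on $\|\nabla\rho\|_{L^q}$ induced by the density-dependent viscosity.

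Next I would insert this elliptic bound into the transport inequality and apply Lemma \ref{lem-bkm} to replace $\|\nabla u\|_{L^\infty}$ by $C(\|\mathrm{div}\,u\|_{L^\infty}+\|\omega\|_{L^\infty})\ln(e+\|\nabla^2 u\|_{L^q})+C\|\nabla u\|_{L^2}+C$. The key observation is that the coefficient multiplying the logarithm lies in $L^1(0,T)$: the integrability of $\|\mathrm{div}\,u\|_{L^\infty}$ and $\|\omega\|_{L^\infty}$ at exponent $1+1/p$ is provided by \eqref{f-inf}, while $\|H\|_{L^\infty}$ is controlled uniformly by interpolating \eqref{h-lp} with the $W^{2,p}$-regularity of $H$ from \eqref{low2}. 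A logarithmic Gronwall inequality then gives $\sup_{0\le t\le T}\|\nabla\rho\|_{L^q}\le \tilde C$, after which feeding this bound back into the elliptic estimate together with $\int_0^T\|\rho\dot u\|_{L^q}^{1+1/q}dt\le C$ from \eqref{low2} yields $\int_0^T\|\nabla^2 u\|_{L^q}^{1+1/q}dt\le \tilde C$.

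For the remaining quantities, $\|\rho_t\|_{L^2}$ follows directly from $\rho_t=-u\cdot\nabla\rho-\rho\,\mathrm{div}\,u$ together with the controls on $\|u\|_{L^\infty}$, $\|\nabla\rho\|_{L^q}$, and $\|\mathrm{div}\,u\|_{L^2}$. The time-weighted bound $\sup_t t\|u\|_{H^2}^2$ comes from the $L^2$-version of the Step-2 elliptic estimate combined with $\sup_t t\|\sqrt\rho\dot u\|_{L^2}^2\le C$ from Lemma \ref{lem-a0}; the bound $\int_0^T t\|u_t\|_{H^1}^2 dt$ follows from writing $u_t=\dot u-u\cdot\nabla u$ and using $\int_0^T t\|\nabla\dot u\|_{L^2}^2 dt\le C$ from \eqref{a02}; and $\int_0^T t\|\nabla^2 u\|_{L^q}^2 dt\le \tilde C$ is obtained by squaring the elliptic estimate and invoking \eqref{low2} once more. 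The main obstacle is precisely the coupling between $\|\nabla\rho\|_{L^q}$ and $\|\nabla^2 u\|_{L^q}$ generated by $\lambda=\rho^\beta$, which rules out any direct linear Gronwall closure and forces the use of the slip-boundary BKM inequality together with the sharp time-integrability of $\|F\|_{L^\infty}$ and $\|\omega\|_{L^\infty}$ furnished by the previous lemma.
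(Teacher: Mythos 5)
Your proposal is correct and follows essentially the paper's own route: an $L^q$ transport estimate for the density gradient (the paper works with $\Psi=(2\mu+\lambda(\rho))\nabla\rho$ instead of $\nabla\rho$ itself, which merely removes the harmless extra term $\|\mathrm{div}\,u\|_{L^\infty}\|\nabla\rho\|_{L^q}$ that you keep and whose coefficient is integrable in time), coupled with the $L^q$ elliptic bound \eqref{td2u2}, the slip-boundary Beale--Kato--Majda inequality of Lemma \ref{lem-bkm}, a logarithmic Gronwall argument, and then the same treatment of $\rho_t$, $t\|u\|^2_{H^2}$, $t\|u_t\|^2_{H^1}$ and $t\|\nabla^2u\|_{L^q}^2$ via \eqref{low2}, \eqref{a02} and the $L^2$ elliptic estimate. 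One small imprecision: $\sup_{0\leq t\leq T}\|H\|_{L^\infty}$ is \emph{not} uniformly controlled (only $H_0\in H^1$ is assumed), but this is not actually needed---as in the paper it suffices that the magnetic forcing and the coefficient of the logarithm are integrable in time, e.g. $\|H\cdot\nabla H\|_{L^q}\leq C\|H\|_{L^{2q}}\|\nabla H\|_{L^{2q}}\leq C(1+\|\Delta H\|_{L^2})\in L^2(0,T)$ by \eqref{low1}.
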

\begin{proof}
First,  denoting by $\Psi=(\Psi_1,  \Psi_2)$ with $\Psi_i\triangleq(2\mu + \lambda(\rho))\partial_i\rho$ $(i=1, 2)$,  one deduces from $\eqref{CMHD}_1$ that $\Psi_i$ satisfies
\begin{equation}\label{psi1}
\partial_t\Psi_i + (u\cdot\nabla)\Psi_i + (2\mu + \lambda(\rho))\nabla\rho\cdot \partial_iu + \rho \partial_iF + \rho\partial_iP+\frac{1}{2}\rho\partial_i|H|^2 + \Psi_i\mathrm{div}u = 0.
\end{equation}
For $q>2$,  multiplying \eqref{psi1} by $|\Psi|^{q-2}\Psi_i$ and integrating the resulting equation over $\Omega$,  we obtain after
integration by parts and using \eqref{navier-b} to cancel out boundary term that
\begin{equation}
\frac{d}{dt}\|\Psi\|_{L^q}\leq \tilde C(1+\|\nabla u\|_{L^\infty})\|\Psi\|_{L^q}+\tilde C\|\nabla F\|_{L^q}+\tilde C\|H \cdot\nabla H\|_{L^q}.  \label{psi2}
\end{equation}

Next,    we deduce from standard $L^p$-estimate for elliptic system with boundary condition \eqref{navier-b} and \eqref{f-ow} that
\begin{equation}\label{td2u2}
\ba
\|\nabla^2u\|_{L^q}\leq& \tilde C(\|\nabla \mathrm{div}u\|_{L^q}+\|\nabla\omega\|_{L^q})\\
\leq&\tilde C(\|\nabla((2\mu+\lambda)\mathrm{div}u)\|_{L^q}+\|\mathrm{div}u\|_{L^\infty}\|\nabla\rho\|_{L^q}+\|\nabla\omega\|_{L^q})\\
\leq&\tilde C(\|\mathrm{div}u\|_{L^\infty}+1)\|\nabla\rho\|_{L^q}+\tilde C\|\rho\dot{u}\|_{L^q}+\tilde C\|H \cdot\nabla H\|_{L^q}.
\ea
\end{equation}
Then it follows from Lemma  \ref{lem-bkm},  \eqref{f-inf-2}, \eqref{div-inf} and \eqref{td2u2} that
\begin{equation}
\begin{split}
\|\nabla u\|_{L^\infty}\leq& \tilde C(\|\mathrm{div}u\|_{L^\infty}+\|\omega\|_{L^\infty})\mathrm{ln}(e+\|\nabla^2u\|_{L^q})+\tilde C\|\nabla u\|_{L^2}+\tilde C\\
\leq& \tilde C(1+\|\rho\dot{u}\|_{L^q}+\|\Delta H\|_{L^2})\mathrm{ln}(e+\|\nabla\rho\|_{L^q}+\|\rho\dot{u} \|_{L^q}+\|\Delta H\|_{L^2})+\tilde C .  \label{td2u3}
\end{split}
\end{equation}
    Noticing that
\begin{equation}
2\mu\|\nabla\rho\|_{L^q}\leq\|\Psi\|_{L^q}\leq \tilde C\|\nabla\rho\|_{L^q},   \label{tdr-lp}
\end{equation} substituting \eqref{td2u3} into \eqref{psi2}, and using \eqref{f-ow}, one gets
\begin{equation*}
\frac{d}{dt}\mathrm{ln}(e+\|\Psi\|_{L^q})\leq \tilde C(1+\|\rho\dot{u}\|_{L^q}+\|\Delta H\|_{L^2})\mathrm{ln}(e+\|\Psi\|_{L^q})+\tilde C(\|\rho\dot{u}\|^{1+1/q}_{L^q}+\|\Delta H\|_{L^2}),
\end{equation*}
which together with Gronwall's inequality,  \eqref{low1},  \eqref{low2}, and \eqref{tdr-lp} yields that
\begin{equation}\label{tdr-lp2}
\sup_{0\leq t\leq T}\|\nabla\rho\|_{L^q}\leq \tilde C.
\end{equation}
Furthermore, \eqref{CMHD}$_1$ together with \eqref{tdr-lp2} and \eqref{low1} yields
\begin{equation}\label{tdr-lp3}
\|\rho_t\|_{L^q}\leq \tilde C(\|u \cdot \nabla \rho\|_{L^2}+\|\rho \mathop{\mathrm{div}}\nolimits u\|_{L^2})\leq \tilde C(\|\nabla\rho\|_{L^q}+1)\leq \tilde C.
\end{equation}
Combining \eqref{low1}, \eqref{low2}, \eqref{td2u2}, and \eqref{tdr-lp2}  gives
\begin{equation}
\int_0^T\left(\|\nabla^2u\|_{L^q}^{1+1/q}+t\|\nabla^2u\|_{L^q}^2\right)dt\leq \tilde C. \label{td2u-1}
\end{equation}

Moreover,  it follows from  \eqref{udot}, \eqref{tdudot}, \eqref{low1}, \eqref{a02}, \eqref{tdu-l4},  and \eqref{td2u-1} that
\begin{equation}\label{ut-h1}
\begin{split}
&\int_0^Tt\|u_t\|_{H^1}^2dt\leq\tilde C\int_0^Tt\left(\|\dot u\|^2_{H^1}+\|u\cdot \nabla u\|_{H^1}^2\right)dt\\ \leq& \tilde C\int_0^Tt\left(1+\|\nabla \dot u\|_{L^2}^2+\|\nabla u\|^4_{L^4}+\|u\|_{H^1}^2\|\nabla u\|_{L^q}^2\right)dt\leq\tilde C.
\end{split}
\end{equation}

Finally, we obtain from \eqref{low1},  \eqref{f-ow}, and \eqref{tdr-lp2} that
\begin{equation}\label{td2u-2}
\begin{split}
\|\nabla^2u\|_{L^2}\leq& \tilde C(\|\nabla \mathrm{div}u\|_{L^2}+\|\nabla\omega\|_{L^2})\\
\leq&\tilde C(\|\nabla(2\mu+\lambda)\mathrm{div}u)\|_{L^2}
+\|\mathrm{div}u\|_{L^{2q/(q-2)}}\|\nabla\rho\|_{L^q}+\|\nabla\omega\|_{L^2})\\
\leq&\frac{1}{2}\|\nabla^2 u\|_{L^2}+\tilde C\|\rho\dot{u}\|_{L^2}+\tilde C \|\Delta H\|_{L^2}+\tilde C,
\end{split}
\end{equation}
which together with \eqref{a02} gives
\begin{equation*}
\sup_{0\leq t\leq T}t\|\nabla^2u\|^2_{L^2}\leq \tilde C.
\end{equation*}
Combining this, \eqref{tdr-lp2}-\eqref{ut-h1}  and \eqref{low1} yields \eqref{rho-h} and finishes   the proof of Lemma \ref{lem-high}.
\end{proof}

\section{Proof of  Theorem  \ref{th1}-\ref{th2}}\label{se5}

With all the a priori estimates in Section \ref{se3} and Section \ref{se4} at hand, we are going to  prove the main result of the paper in this section.

We first state the global existence of strong solution $(\rho,  u, H)$ provided that \eqref{b-g} holds and that $(\rho_0,  m_0, H_0)$ satisfies \eqref{dt2} whose proof is similar to that of \cite[Proposition 5.1]{hl2016}.
\begin{proposition} \label{prop3}
Assume that \eqref{b-g} holds and that $(\rho_0,  m_0, H_0)$ satisfies \eqref{dt2}.  Then there exists a unique strong solution $(\rho,  u, H)$ to \eqref{CMHD}-\eqref{boundary} in $\Omega\times(0,  \infty)$ satisfying \eqref{esti-uh-local} for any $T\in(0, \infty)$.  In addition,  for any $q > 2$,  $(\rho,  u, H)$ satisfies \eqref{rho-h} with some positive constant $C$ depending only on $q$, $\Omega$, $T$, $\mu$, $\beta$, $\gamma$, $\nu$, $\|\rho_0\|_{W^{1,q}}$, $\|u_0\|_{H^1}$ and $\|H_0\|_{H^1}$.
\end{proposition}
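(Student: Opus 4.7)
The plan is to combine the local existence result of Lemma \ref{lem-local} with the a priori bounds of Sections \ref{se3}--\ref{se4} through a standard continuation argument. Since $(\rho_0,m_0,H_0)$ satisfies the stronger hypothesis \eqref{dt2}, Lemma \ref{lem-local} yields a unique strong solution $(\rho,u,H)$ on $\Omega\times[0,T_0]$ in the class \eqref{esti-uh-local} with $\inf_{\Omega}\rho(\cdot,t)>0$ there. I then set
\begin{equation*}
T^{\ast}\triangleq\sup\{T>0:\, (\rho,u,H)\text{ is a strong solution of \eqref{CMHD}--\eqref{boundary} on }\Omega\times[0,T]\text{ in the class \eqref{esti-uh-local}}\},
\end{equation*}
so that $T^{\ast}\ge T_0>0$, and the goal reduces to showing $T^{\ast}=+\infty$.

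Suppose for contradiction that $T^{\ast}<\infty$. The crucial point is that every estimate proved in Sections \ref{se3}--\ref{se4}---Proposition \ref{lem-rho-inf}, Lemmas \ref{lem-low1}--\ref{lem-high} and the lower bound \eqref{inf-r}---involves constants depending only on $T^{\ast}$ and on the initial norms $\|\rho_0\|_{L^\infty}$, $\|\rho_0\|_{W^{1,q}}$, $\|u_0\|_{H^1}$, $\|H_0\|_{H^1}$, but \emph{not} on the (possibly small) lower bound of $\rho_0$. Passing to the limit $t\uparrow T^{\ast}$ along the solution curve, one therefore obtains well-defined limit functions $(\rho,u,H)(\cdot,T^{\ast})$ with $\rho(\cdot,T^{\ast})\in W^{1,q}$, $(u,H)(\cdot,T^{\ast})\in H^2$, and $\inf_{\Omega}\rho(\cdot,T^{\ast})\ge C^{-1}\inf_{\Omega}\rho_0>0$. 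Applying Lemma \ref{lem-local} with initial time $T^{\ast}$ would then produce a strong solution on $[T^{\ast},T^{\ast}+T_1]$ for some $T_1>0$, which glued to the original solution extends it beyond $T^{\ast}$, contradicting the maximality of $T^{\ast}$. Hence $T^{\ast}=+\infty$, and the regularity \eqref{esti-uh-local} together with the quantitative bound \eqref{rho-h} follow from the a priori estimates on any finite $[0,T]$.

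The main technical obstacle is verifying that $(\rho,u,H)(\cdot,T^{\ast})$ actually belongs to the class required by hypothesis \eqref{dt2} of Lemma \ref{lem-local}, in particular that $\rho(\cdot,T^{\ast})\in H^2$, which is not directly covered by \eqref{rho-h}. I would fill this gap by an additional energy estimate for $\nabla^2\rho$ obtained by differentiating the continuity equation $\rho_t+\mathop{\mathrm{div}}\nolimits(\rho u)=0$ twice and testing against $\nabla^2\rho$. The dangerous terms are of the form $\int|\nabla u||\nabla^2\rho|^2\,dx$ and $\int|\nabla\rho||\nabla^2 u||\nabla^2\rho|\,dx$, which are controlled by Sobolev embedding $W^{2,q}\hookrightarrow W^{1,\infty}$ together with the bound $\|\nabla u\|_{L^\infty}\in L^1_t$ that follows from \eqref{rho-h} and Lemma \ref{lem-bkm}, after which Gronwall's inequality closes the estimate for $\sup_t\|\nabla^2\rho\|_{L^2}$. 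A parallel argument propagates $H^2$-regularity of $u$ and $H$ using the momentum and magnetic equations. Once this higher-order propagation is secured, the strong continuity of $(\rho,u,H)$ into the spaces in \eqref{esti-uh-local} and uniqueness follow by routine arguments from the bounds already at hand.
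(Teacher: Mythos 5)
Your overall strategy coincides with the paper's: local existence from Lemma \ref{lem-local}, definition of a maximal time $T^{\ast}$, the lower bound \eqref{inf-r} (valid since $\inf\rho_0>0$ under \eqref{dt2}), the a priori estimates of Sections \ref{se3}--\ref{se4}, and a propagation of $H^2$-regularity to contradict maximality. You also correctly identify the real obstruction, namely that \eqref{rho-h} only yields $\rho\in W^{1,q}$ while the continuation requires $\sup_{t}\|\rho\|_{H^2}<\infty$. However, the way you propose to close this gap does not work as described. Differentiating the continuity equation twice and testing with $\nabla^2\rho$ produces, besides the terms you list, the term $\int \rho\,\partial_i\partial_j\mathrm{div}u\,\partial_i\partial_j\rho\,dx$, i.e.\ a contribution involving \emph{third} derivatives of $u$. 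None of the established estimates gives $\|\nabla^3 u\|_{L^2}$ (or $\|\nabla^2\mathrm{div}u\|_{L^2}$) in $L^1_t$ or $L^2_t$ directly, and $\|\nabla u\|_{L^\infty}\in L^1_t$ is of no help for it. This is precisely why the paper does not work with $\nabla^2\rho$ directly but with $\Psi=(2\mu+\lambda(\rho))\nabla\rho$: in the transport equation \eqref{psi1} the top-order velocity term $\rho\nabla\big((2\mu+\lambda)\mathrm{div}u\big)$ is replaced, via the effective viscous flux, by $\rho\nabla F+\rho\nabla P+\tfrac12\rho\nabla|H|^2$, so that after one more differentiation the highest-order contribution enters only through $\|\nabla^2F\|_{L^2}$ and $\|\nabla^2\omega\|_{L^2}$, which are in $L^2_t$ by \eqref{td2u-3}; the estimate then closes by Gronwall as in \eqref{psi2-1}--\eqref{sup-phi2}.

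A second, related omission: to obtain \eqref{td2u-3} one needs $\int_0^T\|\nabla\dot u\|_{L^2}^2\,dt\le \hat C$ and $\sup_t(\|\sqrt{\rho}\dot u\|_{L^2}^2+\|H_t\|_{L^2}^2+\|\Delta H\|_{L^2}^2)\le\hat C$ \emph{without} the time weight $\sigma$, i.e.\ \eqref{a20-new}. The weighted bounds of Lemma \ref{lem-a0} do not give this; the paper obtains it by integrating \eqref{a20} from $t=0$, which is legitimate only because under \eqref{dt2} the compatibility quantities $\sqrt{\rho}\dot u(\cdot,0)$ and $H_t(\cdot,0)$ in \eqref{in-data1} are well defined in $L^2$ (here the $H^2$-regularity and strict positivity of $\rho_0$ are used). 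Your proposal never derives these unweighted bounds, and without them (or a restart of the argument from a positive time $\tau>0$, carefully justified) the control of $\nabla(\rho\dot u)$, hence of $\nabla^2F$, is missing. Also note a small misplacement of emphasis: for Proposition \ref{prop3} the constants are allowed to depend on $\inf_{\Omega}\rho_0$ (the paper's $\hat C$ does, through \eqref{inf-r2}); independence of the vacuum lower bound is only needed for the bound \eqref{rho-h} itself, which is what survives the approximation in the proof of Theorem \ref{th1}. So the route is the right one, but the $H^2$-propagation for $\rho$ must be rebuilt around the flux quantity $\Psi$ (or an equivalent elliptic reduction of $\nabla^2\mathrm{div}u$ to $\nabla^2F$, $\nabla^2P$, $\nabla^2|H|^2$) together with the unweighted estimate \eqref{a20-new}.
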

\begin{proof}
By Lemma \ref{lem-local}, assume that the initial data $(\rho_0,m_0,H_0)$ satisfies \eqref{dt2}, there is a $T_0>0$ depending on $\inf\limits_{x\in \Omega}\rho_0(x)$ such that the problem \eqref{CMHD}-\eqref{boundary} has a unique local strong solution $(\rho,u,H)$ on $\Omega\times (0,T_0]$ satisfying \eqref{esti-uh-local} and \eqref{rho-local}. We set
\begin{equation}\label{sup-t}
	T^*=\sup \Big\{T\Big|\sup_{0\leq t\leq T}\|(\rho,u,H)\|_{H^2}<\infty\Big\}.
\end{equation}
Clearly, $T^*\geq T_0$.
We assert that
\begin{equation}
	T^*=\infty.
\end{equation}
Otherwise, $T^*<\infty$. Consequently, for any $T\in (0,T^*)$, \eqref{dt2} together with \eqref{inf-r} implies that
\begin{equation}\label{inf-r2}
\inf_{(x,t)\in \Omega\times(0,T)}\rho(x,t)\ge \hat{C}^{-1},
\end{equation}
where and in what follows, $\hat{C}$ denotes some generic positive constant depending on $T^*$ and $\inf\limits_{x\in \Omega}\rho_0(x)$ but independent of $T$.
Because of \eqref{dt2}, we define
\begin{equation}\label{in-data1}
	\begin{split}
		\sqrt{\rho}\dot{u}(x,t=0)=&\sqrt{\rho_0}(\mu \Delta u_0\!+\!\nabla((\mu\!+\!\lambda(\rho_0))\mathop{\mathrm{div}}\nolimits u_0\!-\!P(\rho_0)\!-\!|H_0|^2/2)\!+\!H_0 \!\cdot\! \nabla H_0),\\
H_t(x,t=0)=&\nu \Delta H_0+H_0 \cdot \nabla u_0-u_0 \cdot \nabla H_0-H_0 \mathop{\mathrm{div}}\nolimits u_0.
	\end{split}
\end{equation}
Integrating \eqref{a20} with respect to $t$ over $(0,T)$ together with \eqref{dt2}, \eqref{low1} and \eqref{in-data1} yields
\begin{equation}\label{a20-new}
\sup_{0\leq t\leq T}(\|\sqrt{\rho}\dot{u}\|_{L^2}^2+	\|H_t\|_{L^2}^2+\|\Delta H\|_{L^2}^2)+\int_0^T\|\nabla\dot{u}\|_{L^2}^2+	\|\nabla H_t\|_{L^2}^2)dt\leq \hat{C}.
\end{equation}
This combined with \eqref{f-ow}, \eqref{rho-h}, and \eqref{td2u-2} leads to
\begin{equation}\label{td2u-3}
	\begin{split}
&\sup_{0\leq t\leq T}(\|\nabla^2 u\|_{L^2}^2+	\|\nabla F\|_{L^2}^2+\|\nabla \omega\|_{L^2}^2)+\int_0^T(\|\nabla^2F\|_{L^2}^2+	\|\nabla^2 \omega\|_{L^2}^2)dt\\
\leq& \hat{C}\sup_{0\leq t\leq T}(\|\sqrt{\rho}\dot{u}\|_{L^2}^2+\|\Delta H\|_{L^2}^2)+\hat{C}\int_0^T(\|\nabla(\rho\dot{u})\|_{L^2}^2+	\|\nabla^2|H|^2\|_{L^2}^2)dt\\
\leq&\hat{C}+\hat{C}\int_0^T(\|\nabla \rho\|_{L^q}^2\|\dot{u}\|_{L^{\frac{2q}{q-2}}}^2+\|\nabla\dot{u}\|_{L^2}^2)dt \leq \hat{C},
	\end{split}
\end{equation}
where we have used the fact
\begin{equation}\label{td2h-new}
\|\nabla^2|H|^2\|_{L^2}^2\leq \hat{C} (\|\nabla H\|_{L^4}^4+\|\Delta H \cdot H\|_{L^2}^2)\leq \hat{C}+\hat{C}\|\Delta H\|_{L^2}^{5/2}\leq \hat{C}.
\end{equation}

Next, we claim that, for all $0 <T<T^*$,
 \begin{equation}\label{sup-r-h2}
 	\sup_{0\leq t\leq T}\|\rho\|_{H^2}<\hat{C},
 \end{equation}
which together with \eqref{a20} and \eqref{td2u-3} contradicts \eqref{sup-t}. To finish the proof of Proposition \ref{prop3}, it only remains to prove \eqref{sup-r-h2}. This is done in essentially the same plan as used in Lemma \ref{lem-high}.
Operating $\nabla$ to \eqref{psi1} and multiplying the resulting equality by $\nabla\Psi_i$, we obtain after integration by parts and using \eqref{inf-r2} and \eqref{td2u-3} that
\begin{equation}\label{psi2-1}
\begin{split}
\frac{d}{dt}\|\nabla\Psi\|_{L^2}\leq& \hat{C}(1+\|\nabla u\|_{L^\infty})(1+\|\nabla\Psi\|_{L^2}+\|\nabla \rho\|_{L^4}^2+\|\nabla^2 \rho\|_{L^2})\\
&+\hat{C}\||\nabla \rho||\nabla^2 u|\|_{L^2}+\hat{C}\||\nabla \rho||\nabla F|\|_{L^2}+\hat{C}\|\nabla^2 F\|_{L^2}\\
&+\hat{C}\||\nabla \rho||\nabla|H|^2|\|_{L^2}+\hat{C}\|\nabla^2 |H|^2\|_{L^2}\\
\leq& \hat{C}(1+\|\nabla u\|_{L^\infty})(1+\|\nabla\Psi\|_{L^2})+\hat{C}\|\nabla^2 F\|_{L^2}+\hat{C}\|\nabla^2 \omega\|_{L^2},	
\end{split}
\end{equation}
due to standard $L^2$-estimate for elliptic system, \eqref{td2u-3}, \eqref{td2h-new} and the fact
\begin{equation}\label{td2r}
\begin{split}
\|\nabla \rho\|_{L^4}^2+\|\nabla^2 \rho\|_{L^2}
\leq\hat{C} \|\nabla \Psi\|_{L^2}+\hat{C}\|\nabla \rho\|_{L^4}^2
\leq\hat{C} \|\nabla \Psi\|_{L^2}+\frac{1}{2}\|\nabla^2 \rho\|_{L^2}+\hat{C}.
\end{split}
\end{equation}
Then, \eqref{psi2-1} combined with \eqref{td2u-3} and Gronwall's inequality yields
\begin{equation}\label{sup-phi2}
\sup_{0\leq t\leq T}\|\nabla \Phi\|_{L^2}<\hat{C},	
\end{equation}
which together with \eqref{td2r} implies \eqref{sup-r-h2}. The proof of Proposition \ref{prop3} is finished.
\end{proof}

{\it\textbf{Proof of Theorem \ref{th1}.}}
Let $(\rho_0,  m_0, H_0)$ satisfying \eqref{dt1} be the initial data as described in Theorem \ref{th1}.  We construct a sequence of $C^{\infty}$ initial value $(\rho_0^{\delta}, u_0^{\delta}, H_0^{\delta})$,  and $(u_0^{\delta}, H_0^{\delta})$ should satisfy boundary condition.   Using the standard approximation theory to $({\rho}_0, u_0, H_0)$(see \cite{evans2010} for example), we can find
a sequence of $C^\infty$ functions $(\tilde{\rho}_0^\delta,\tilde{u}_0^\delta,\tilde{H}_0^\delta)$ satisfying
\begin{equation}\label{51}
\lim_{\delta\rightarrow 0}\|\tilde{\rho}_0^\delta-\rho_0\|_{W^{1,q}}+\|\tilde{u}_0^\delta-u_0\|_{H^1}+\|\tilde{H}_0^\delta-H_0\|_{H^1}=0.
\end{equation}
Furthermore, let $(u^\delta_0,H^\delta_0)$ be the unique smooth solution of elliptic system
\begin{equation}\label{52}
\begin{cases}
  \Delta u^{\delta}=\Delta\tilde{u}^\delta_0,\quad \Delta H^{\delta}=\Delta\tilde{H}^\delta_0, & \mbox{ in } \Omega, \\
   u^{\delta}\cdot n=0, \ \mathrm{curl}u^{\delta}=0, \ H^{\delta}=0,& \mbox{ on }\partial\Omega. \\
       \end{cases}
\end{equation} We define $ {\rho}_0^\delta=\tilde\rho^\delta_0+\delta$,
and set $m^\delta_0=\rho^\delta_0 u^\delta_0.$   Then, it is easy to check that
\begin{equation*}
\lim_{\delta\rightarrow 0}(\|\rho^\delta_0-\rho_0\|_{W^{1, q}}+\|u^\delta_0-u_0\|_{H^1}+\|H^\delta_0-H_0\|_{H^1})=0.
\end{equation*}

After applying Proposition \ref{prop3}, we can construct a unique global strong solution $(\rho^{\delta}, u^{\delta}, H^{\delta})$ with initial value $(\rho_0^{\delta}, u_0^{\delta}, H_0^{\delta})$.
Such solution satisfy \eqref{rho-h} for any $T>0$ and for some $C$ independent with $\delta$.
Letting $\delta\rightarrow 0$, standard compactness assertions (see\cite{lzz2015,vk1995,Pere2006}) make sure the problem \eqref{CMHD}-\eqref{boundary} has a global strong solution $(\rho, u, H)$ satisfying \eqref{esti-uh}.
Since the uniqueness can be obtained via similar method in Germain \cite{germ2011}, we omit the details and finish the proof of Theorem \ref{th1}.   \endproof

{\it \textbf{Proof of Theorem \ref{th2}.}}
With all a priori estimations and Theorem \ref{th1} done, we may follow the proof (especially the compactness assertions) in \cite{hl2016} to deduce
the existence of weak solution. We just sketch the proof for completeness.
Let $(\rho_0, m_0, H_0)$ be the initial data as in Theorem \ref{th2}, we construct an approximation initial value
$(\rho^\delta_0, u^\delta_0, H^\delta_0)$ in the same manner as \eqref{51} and \eqref{52}, but this time we have for any $p\geq 1$,
\begin{equation*}
\lim_{\delta\rightarrow 0}(\|\rho^\delta_0-\rho_0\|_{L^{p}}+\|u^\delta_0-u_0\|_{H^1}+\|H^\delta_0-H_0\|_{H^1})=0.
\end{equation*}
Moreover,
\begin{equation*}
\rho^\delta_0\rightarrow\rho_0\  \mathrm{in}\ W^*\ \mathrm{topology}\ \mathrm{of}\ L^\infty\ \mathrm{as}\ \delta\rightarrow 0.
\end{equation*}
We apply Proposition \ref{prop3} to deduce the existence of unique global strong solution $(\rho^\delta, u^\delta, H^\delta)$ of problem \eqref{CMHD}-\eqref{boundary} with initial value $(\rho^\delta_0, u^\delta_0, H^\delta_0)$ which satisfy \eqref{rho-inf1}, \eqref{low1}, \eqref{a02}, \eqref{tdu-lp-t}, \eqref{low2}
and \eqref{f-inf} for any $T>0$. The constants involved are all independent of $\delta$. Combining \eqref{low1} and \eqref{low2} yields that
\begin{equation*}
\sup_{0\leq t\leq T}\|(u^\delta, H^\delta)\|_{H^1}+\int_0^T t\|(u^\delta_t, H^\delta_t)\|_{L^2}^2dt\leq C.
\end{equation*}
Applying Aubin-Lions Lemma, for any $\tau\in (0, T)$ and $p\geq 1,$ one gets that, up to a subsequence,
\begin{equation*}
\begin{split}
&(u^\delta, H^\delta)\rightharpoonup (u, H), \mbox{ weakly  * } \mathrm{in}\ L^\infty(0, T;H^1), \\
&(u^\delta, H^\delta)\rightarrow (u, H)\  \mathrm{in}\ C([\tau, T]; L^p).
\end{split}
\end{equation*}

Moreover, denote $F^\delta=\big(2\mu+(\rho^\delta)^\beta\big)\mathrm{div} u^\delta-P(\rho^\delta)-\frac{1}{2}|H^\delta|^2$ be the effect viscous flux with respect to approximation solution $(\rho^\delta, u^\delta, H^\delta)$.
From \eqref{low1} and \eqref{f-inf}, we have
\begin{equation*}
\int_0^T\left(\big \|F^\delta\|_{L^\infty}^{ {4}/{3}} +\|\omega^\delta\|_{H^1}^2 +\|F^\delta\|_{H^1}^{2}
+t\|\omega^\delta_t\|_{L^2}^2+t\|F^\delta_t\|_{L^2}^{2}\right) dt\leq C.
\end{equation*}
Similarly, Aubin-Lions Lemma yields
\begin{equation}
\begin{split}\label{59}
&F^\delta\rightharpoonup F \mbox{ weakly  * }  \ \mathrm{ in }\ L^{\frac{4}{3}}(0, T;L^{\infty}),\\
&(F^\delta, \omega^\delta)\rightarrow (F, \omega), \ \mathrm{in}\ L^{2}(\tau, T;L^{p}),
\end{split}
\end{equation}
for any  $\tau\in (0, T)$  and $p\geq 1. $

Now we may follow the assertions in \cite{hl2016} to deduce the strong convergence of $\rho^\delta$, say
\begin{equation*}
\rho^\delta\rightarrow \rho\ \mathrm{ in }\ C([0,  T];L^p(\Omega)),
\end{equation*}
for any $p\geq 1$. Combining this with \eqref{59}, we argue that
\begin{equation*}
F^\delta\rightarrow F\ \mathrm{in}\ L^2(\Omega\times(\tau,  T)),
\end{equation*}
for any $\tau\in (0, T).$  And \eqref{esti-w} follows directly.
We conclude that $(\rho, u, H)$ is exactly the desired weak solution in Theorem \ref{th2} and the proof is finished.   \endproof
\section*{Acknowledgements} This research was partially supported by National Natural Sciences Foundation of China No. 11901025, 11671027, 11971020, 11971217.

\end{document}